\def\R{\mathbb{R}}
\def\g{\gamma}
\def\<{\langle}
\def\>{\rangle}
\def\a{\alpha}
\def\eps{\varepsilon}
\def \o{\Omega}
\def \oo{\omega}
\def \d{\delta}
\def\e{\varepsilon}
\newtheorem{thm}{Theorem}[section]
\newtheorem{prop}[thm]{Proposition}
\newtheorem{lem}[thm]{Lemma}
\newtheorem{theorem}[thm]{Theorem}
\newtheorem{definition}[thm]{Definition}
\newtheorem{proposition}[thm]{Proposition}
\newtheorem{corollary}[thm]{Corollary}
\newtheorem{rem}[thm]{Remark}
\DeclareMathOperator*{\esssup}{\esssup}
\begin{document}
	
	\title{\bf Stochastic homogenization of Hamilton-Jacobi equations on a junction}
	\author{
		\normalsize\textsc{FORCADEL Nicolas$^1$, FAYAD Rim$^1$, IBRAHIM Hassan $^2$ }} \vspace{20pt} \maketitle
	\footnotetext[1]{Normandie Univ, INSA de Rouen, LMI (EA 3226 - FR CNRS 3335), 76000 Rouen, France, 685 Avenue de
		l'Universit\'e, 76801 St Etienne du Rouvray cedex. France}    
	\footnotetext[2]{	Lebanese University, Faculty of Sciences-I Mathematics Department Hadath, Beyrouth, Lebanon}

	\maketitle
	\begin{center}
		\Large{Abstract} 
		
	\end{center}
	{\indent{\footnotesize{\quad We consider the specified stochastic homogenization of first order evolutive Hamilton-Jacobi equations on a very simple junction, i.e the real line with a junction at the origin. Far from the origin, we assume that the considered hamiltonian is closed to given stationary ergodic hamiltonians (which are different on the left and on the right). Near the origin, there is a perturbation zone which allows to pass from one hamiltonian to the other. The main result of this paper is a stochastic homogenization as the length of the transition zone goes to zero. More precisely, at the limit we get two deterministic right and left hamiltonians with a deterministic junction condition at the origin. The main difficulty and novelty of the paper come from the fact that the hamiltonian is not stationary ergodic. Up to our knowledge, this is the first specified stochastic homogenization result. This work is motivated by traffic flow applications.    }}}
	
	\section{General Introduction}
	
	In this paper we are interested in the specified stochastic homogenization of first order evolutive stochastic Hamilton-Jacobi equations posed on the real line. Our work is motivated by the paper of Imbert, Galise and Monneau \cite{Galise} which studies the specified (periodic) homogenization of an Hamilton-Jacobi equation with some applications in traffic flow. 	
	
	Many homogenization's results concerning the rescaling of traffic's dynamics have been obtained in the periodic setting. We refer for instance to \cite{homoge.periodique,forcadel, FIM,FIM2,loi de conse} for micro-macro passage. In this setting, all the drivers are assumed to be identical (or if we have different types of drivers, they are periodically distributed). From a modeling point of view, this assumption allows to get very interesting results and to justify macroscopic models but it is not very realistic. In this paper, we investigate the stochastic setting in which the type of drivers are randomly distributed. A first result concerning the stochastic case were obtained recently in \cite{Card-For} and concerns the micro-macro passage in a single road with a stochastic distribution of the drivers. In this paper, we consider a stochastic hamiltonian describing the traffic on a single road (i.e the real line). The main difficulty and novelty come from the fact that, at the origin, we assume that there is a local perturbation. The typical examples we have in mind are a speed traffic sign, a slowdown near a school or due to a car crash near the road for example). Then we assume that far from the origin, the hamiltonian is closed to two given hamiltonian (one on the left and the other on the right) and near the origin, there is a perturbation and a transition zone which allows to pass from one hamiltonian to the other. 	 The goal of our paper is to understand which is the limiting model as the length of the transition zone goes to zero. At the limit we will recover two (deterministic) hamiltonians on the left and on the right, with a (deterministic) junction condition in the origin.

	The stochastic homogenization of Hamilton-Jacobi equations has been extensively studied since the pioneer work of Souganidis \cite{souga}. We refer for example to  {\cite{homog.stoch.quantitative.premier.ordre.concexe,homo.stoch.premier.ordre.quasiconvex,homo.stoch.premier.ordre.convexe} for the case of first-order convex and quasiconvex Hamilton-Jacobi equation, to \cite{Second.order.equation.homoge.stoch,Second.order.equation.homoge.stoch.2,Homo.stoch.second.ordre,homo.stoch.second.ordre.3} for second order convex and non-convex Hamilton-Jacobi equation, and to \cite{Homoge.stoch.pour.un.hamiltonian.qui.depend.du.temps,Homoge.stoch.pour.un.hamiltonian.qui.depend.du.temps.2} for the case of hamiltonian depending on time. Concerning the specified homogenization in the periodic case, we refer to \cite{achdou,Galise, Lions}.
	Nevertheless, up to our knowledge, this is the first result concerning specified stochastic homogenization. The main difficulty in this setting comes from the fact that the hamiltonian is no more stationary ergodic, which is an essential assumption in the rest of the literature. To overcome this difficulty, we will use some technics developed in \cite{homog.stoch.quantitative.premier.ordre.concexe}
to obtain quantitative homogenization result's.

		\section{Assumptions and main results}\label{assum and main result}
		We begin this section with several assumptions, on a hamiltonian $H=H(p,y,\oo)$,  needed to prove our stochastic homogenization result. First we want to define the probability space, so we define for all $ V $ $ \in \mathbf {B} $, where $\mathbf{B}$ denoted the Borel $\sigma$-algebra on $\R$, the $\sigma$-algebra $\mathbf F(V)$ by $$ \mathbf F(V):= \{\sigma \mbox{- algebra generated by the maps:} ~ \oo \rightarrow H(p,y,\oo), \forall~y \in V, p \in \R\}.$$ Let $\mathbf F=\mathbf F(\R)$ and  $(\o, \mathbf{F},P)$ be a probability space. 
		
		For all $\omega \in \o$, we consider the equation, for all $T>0$, $$u_{t}(t,y)+H(Du,y,\omega)=0 ~~\mbox{in}~ (0,T)\times\R,$$where the hamiltonian $H:\R\times\R\times \o \to \R$ is measurable with respect to $\mathbf{B} \times \mathbf{B} \times \mathbf{F} $.
		
		We make the following assumptions on $H$.
		\begin{enumerate}[font={\bfseries},label={(H\arabic*)}]
\item \underline{Regularity with respect to $p$}:  $H$ is Lipschitz continuous with respect to $p$ uniformly in $(y,\oo)$.
\item \underline{Boundedness of $H$ in $(p,y)$}:
			There exists $c_{0},C_{0},\gamma>0$, such that for all $p, y \in \R,$ and $\oo \in \o$ we have $$ -c_{0}|p+\gamma|\le H(p,y,\oo)\le C_{0}|p+\gamma| .  $$ In particular, we obtain that 	
			for every $R>0$, the family of functions $\{H(\cdot,\cdot,\oo);\oo\in \o\}$ is bounded for $(p,y)\in B_{R} \times \R$, with $ B_{R}$ a ball of radius $R$ in $\R$.
\item \underline{Uniform coercivity}: $H$ is coercive with respect to $p$, uniformly in $(y,\oo)$, i.e $$ \lim\limits_{|p|\to +\infty} \inf\limits_{(y,\oo)} H(p,y,\oo) =+\infty.$$
\item \underline{Uniform modulus of continuity in space}: There exists a modulus of continuity $w$ such that $\forall x,y,p\in \R$ and $\oo \in \o$ we have $$|H(p,y,\oo)-H(p,x,\oo)| \le w(|x-y|(1+|p|)).$$
\item \underline{Convexity}: For all $(y,\oo)$, the hamiltonian $p \to H(p,y,\oo)$ is convex.
\item \underline{A unique lower bound for $H$:}\label{inegalite du controle optimal sur H} For all $y,p \in \R$, and $\oo \in \o$ we have \begin{equation}
			H(p,y,\oo)\ge H(0,y,\oo).	    
			\end{equation} 
			\end{enumerate}
			\begin{rem}
			We can consider a hamiltonian given by traffic flow modelling (see \cite{Imbert}) such that, there exists $p_{0}\in \R$ such that  for all $y \in \R,\oo \in \o$ we have that $$ H(p,y,\oo)\ge H(p_{0},y,\oo).  $$ We obtain the same results by defining a hamiltonian $\tilde{H}$ given by $$  \tilde{H}(p,y,\oo)=H(p-p_{0},y,\oo) $$ which verifies ${\bf (H6)}$.
						\end{rem}
\begin{enumerate}[font={\bfseries},label={(H\arabic*)}]
\setcounter{enumi}{6}
\item \underline{Left and right hamiltonians}: There exists two stochastic hamiltonians $H_{L}$ and $H_{R}$ such that $H$ is equal to $H_L$ near $-\infty$ and to $H_R$ near $+\infty$. More precisely, we assume either:

\noindent $\textbf{(H7}${\bf -}$\textbf{WFL)}$
The hamiltonian $H$ can be written as follow 
\begin{equation}\label{forme restrictive de H}
H(p,y,\oo)=\phi(y)H_{L}(p,y,\oo)+(1-\phi(y))H_{R}(p,y,\oo),	
\end{equation} 
where $\phi(\cdot)$ is a non-increasing function in $C^{\infty}(\R)$, given by
$$ \phi(y)=\left\{
\begin{array}{ll}
1 & \mbox{if}~y\le -1 ,\\
0 & \mbox{if}~y \ge 1.
\end{array}
\right. 
$$
\end{enumerate}
\begin{enumerate}[font={\bfseries},label={(H\arabic*)}]
\setcounter{enumi}{7}
\item[or] ~\newline

\noindent $\textbf{(H7}${\bf -}${\bf \eps}${\bf )} There exists an Hamiltonian $H_{0}\ge H_R, H_L$
such that the Hamiltonian $H$ is given, for $\eps>0$, by
\begin{equation}\label{eq:2-3}
 H(p,y,\oo)=\psi^{\eps}_{1}(-y)H_{L}(p,y,\oo)+H_{0}(p,y,\oo)(1-\psi^{\eps}_{1}(-y))(1-\psi^{\eps}_{1}(y) )+\psi^{\eps}_{1}(y)H_{R}(p,y,\oo),   
\end{equation} 
with $\psi_1^\eps\in C^{\infty}(\R)$ satisfying
$$  \psi_{1}^{\eps}(y)=\left\{
\begin{array}{ll}
1 & y > \frac{2}{\sqrt{ \eps}}\\
0& y <\frac{1}{\sqrt{ \eps}}.
\end{array}
\right.$$ 
\end{enumerate}
\begin{rem}
In ${\bf (H7}${\bf -}${\bf \eps)}$, the Hamiltonian $H$ depends on $\eps$. Nevertheless, to simplify the notation, we don't explicit this dependence. Note also that the power of $\eps$ ($-1/2$ here) is fixed  to simplify the presentation, but one can replace $\eps^{-\frac 12}$ by  $\eps^{-\a}$, with $\a\in (0,1)$, in the definition of $\psi_1^\eps$ and the homogenization result will still hold true.
\end{rem}
\begin{rem}
Let us give some explanations on Assumptions ${\bf (H7}${\bf -}${\bf WFL)}$ and  ${\bf (H7}${\bf -}${\bf \eps)}$. In fact, in  Assumption ${\bf (H7}${\bf -}${\bf WFL)}$, the hamiltonian pass in a convex way from $H_L$ to $H_R$ and we will see that in that case the flux is not limited. The typical example we have in mind is  a speed reduction or increase near the origin. On the contrary, in Assumption  ${\bf (H7}${\bf -}${\bf \eps)}$, the speed is reduced near the origin. The typical example we have in mind is $H_0=CH_L$, with $C<1$ (in traffic modelling, hamiltonians are negative) which represents a slowdown near the origin (near a school or due to a car crash near the road for example). In that case, we will see that the flux is limited by this slowdown. Let us point out that in the case where the flux is limited, we have to consider a perturbation zone of radius $1/\sqrt \eps$. After the rescaling, we will see that the radius of the perturbation will be $\sqrt \eps$. This radius is necessary to get the convergence of $u^\eps$ to a deterministic function. In particular, we will present a counter-example in Section \ref{sec:42} showing that if $\psi_1^\eps$ doesn't depend on $\eps$ then, the limit can't be deterministic.

\end{rem}
The probability space $(\Omega, \mathbf{F}, P)$ is endowed with an ergodic group $(\tau_{z})_{z \in \R}$ of $\mathbf{F}-$measurable, measure-preserving transformations $\tau_{z}:\o \to \o$. That is, we assume that for every $x,z \in \R$ and $A \in \mathbf{F}$  $$  \tau_{x+z}=\tau_{x}\circ \tau_{z}~\mbox{and}~P(\tau_{z}(A))=P(A), $$ and $$ \mbox{if}~\tau_{z}(A)=A, ~\forall z \in \R, ~\mbox{then either} ~P(A)=0~\mbox{or}~P(A)=1.$$ 
			\begin{enumerate}[font={\bfseries},label={(H\arabic*)}]
			\setcounter{enumi}{7}
			\item \underline{Stationarity of $ H_{\alpha} $}: The hamiltonians $H_{\a}$, for $\a=L,R,0$, are stationary in $(y,\oo)$ with respect to the group $ (\tau_{z})_{z\in \R}$, that is, for every $p,y,z\in \R$ and $\oo \in \o$, 
$$H_{\a}(p,y+z,\omega)=H_{\a}(p,y,\tau_{z}(\omega)).$$
\item \underline{Ergodicity}: 
The probability $P$ satisfies a unit range of dependence:  
$$
\mbox{For all} ~U,V \in \mathbf{B} ~\mbox{such that}~ d(U,V)\ge 1, ~\mbox{we have that}~  \mathbf F(U) ~\mbox{and}~ \mathbf F(V)~ \mbox{are independent. }
$$
\end{enumerate}
For simplicity of notation, we call ${\bf (H)}$ the set of assumptions ${\bf(H1)}$-${\bf (H9)}.$ If we want to specify which assumption is satisfied in ${\bf (H7)}$, we denote it by ${\bf (H}$-${\bf WFL)}$ or ${\bf (H}$-${\bf \eps)}$.
	
\begin{rem}
Let us note that, even if $H_R, H_L, H_0$ are stationary ergodic, due to its particular form (see \eqref{forme restrictive de H} or \eqref{eq:2-3}), $H$ is no longer stationary ergodic.
\end{rem}		

We now give an example of a Hamilton-Jacobi equation, inspired by traffic flow modelling, which verifies ${\bf (H)}$. Let $V_\a:\R^{+}\to \R^{+}$, $\a=L,R,0$, and $\psi:\R\times \o\to \R $ be four functions satisfying the following assumptions:
\begin{itemize}
\item $V_\a$ is Lipschitz continuous and positive.
\item $V_\a$ is non-decreasing and $\lim\limits_{h \to +\infty}V_\a(h)=V_{max}^\a<+\infty$.
\item There exists $h_{0}^\a$ such that $V_\a(h)=0$ for all $h \le h_{0}^\a$.
\item There exists $\tilde{p}\in [-1/h_{0}^\a,0)$ such that the function $p\to p V_\a(-\frac{1}{p})$ is non-increasing in $[ -1/h_{0}^\a;\tilde{p})$ and non-decreasing in $[\tilde{p},0) $.
\item $\psi $ is stationary in  $(y, \oo)$ with respect to the translation group $(\tau_{z})_{z}$, that is, we assume that, for every $y,z\in \R$ and $\oo \in \o$ we have $$ \psi(y,\tau_{z}\oo)=  \psi(y+z,\oo).$$
\item $\psi$ is a positive, Lipschitz continuous function with respect to $y$.
\item $\psi $ is bounded for all $y, \oo$.
\item For each $U\in \textbf{B}$, we denoted by $\textbf{F}(U)$ the $\sigma$-algebra generated by the sets $\{   \oo \to \psi(y,\oo); y \in U\}$. We have that for $U, V \in \textbf{B}$ such that $d(U,V)\ge 1$, $\textbf{F}(U)$ and $\textbf{F}(V)$ are independent.
\end{itemize}

A typical hamiltonian for traffic flow is given by 
		$$
		H^*_{\a}(p )=\left\{
		\begin{aligned}
		&(-p-\tilde{p}-k_{0}), &p<-k_{0}-\tilde{p},\\
		&-|p+\tilde{p}|V_{\a}\left(-\frac{1}{p+\tilde{p}}\right), & -k_{0}-\tilde{p}\le p\le-\tilde{p},\\
		&(p+\tilde{p}) , &p>-\tilde{p},
		\end{aligned}
		\right.
		$$
where $k_0=1/h_0^{\a}$. We then add a stochastic perturbation by considering 
$$H_\a(p,y,\omega)= H^*_\a(p)  \psi(\oo,y)$$
which satisfied assumptions {\bf (H)}. In this setting, the variable $y$ represents a (continuous index of) vehicle and so the perturbation $\psi$ explains how the velocity of this vehicle is modified (for example if the vehicle $y$ is a car or a truck).

		\subsection{Main result} The main result of this paper is a stochastic homogenization result. 
		We  consider the following rescaled problem, for $T>0$,  
		\begin{equation}\label{P.en.epsilon}
		\left\{
		\begin{aligned}
		&u^{\eps}_{t}+H\left(Du^{\eps},\frac{y}{\eps},\oo\right)=0  & (t,y)& \in (0,T)\times \R, \\
		& u^{\eps}(0,y,\oo)=u_{0}(y) & y& \in \R,
		\end{aligned}
		\right.
		\end{equation} where the hamiltonian $H$ satisfies assumptions ${\bf (H)}$, and $u_{0}$ is a Lipschitz continuous function.
\begin{rem}
Under Assumption ${\bf (H}$-${\bf \e)}$, the radius of the perturbation zone becomes of order $\sqrt \e$ and will disappear as $\e\to 0$. Nevertheless, we will see that the macroscopic model we obtain below will keep into memory the presence of this perturbation via the flux limiter $\overline A$ at the origin.
\end{rem}
		We also consider the deterministic limit problem given by  
		\begin{equation}\label{P.jonction.limite}
		\left\{
		\begin{aligned}
		&u_{t}+\overline{H}_{L}(Du)=0,    & y<0&, t\in (0,T),\\
		&u_{t}+\overline{H}_{R}(Du)=0,    & y>0&, t\in (0,T),\\
		&u_{t}+\max(\overline{A},\overline{H}_{L}^{+}(Du(0^{-}),\overline{H}_{R}^{-}(Du(0^{+}))=0,    &y=0 &,t\in (0,T),\\
		&u(0,x)=u_{0}(y), &y \in \R,
		\end{aligned} 
		\right.
		\end{equation}
		where $\overline{H}_{\alpha}^{\pm}$ are the increasing and decreasing part of $\overline{H}_{\alpha}$ and given by the following definition
		\begin{definition}[Definition of the increasing and decreasing part of the hamiltonians $\overline{H}_{\alpha}$] \label{partiecroissantedecroi} For $\alpha\in \{R,L\} $, $\overline{H}_{\alpha}^{+}$ denotes the non-decreasing part of $\overline{H}_{\alpha}$, defined by
			$$\overline{H}_{\alpha}^{+}(p)=
			\left\{
			\begin{aligned}
			&\overline{H}_{\alpha}(p) &\mbox{if }& \, p \ge 0\\
			&\underline{H}_{\alpha} &\mbox {if }& \, p \le 0,
				\end{aligned}
			\right.$$ 
and  $\overline{H}_{\alpha}^{-}$ denotes the non-increasing part of $\overline{H}_{\alpha}$, defined by
			$$\overline{H}_{\alpha}^{-}(p)=
			\left\{
			\begin{aligned}
			&\overline{H}_{\alpha}(p) &\mbox{if }& \,p \le 0\\
			&\underline{H}_{\alpha} &\mbox{ if }& \, p \ge 0, 
				\end{aligned}
			\right.$$
			where $\underline{H}_{\a}=\min\limits_{p}(\overline{H}_{\alpha}(p))$.
		\end{definition}	
		In model \eqref{P.jonction.limite}, the constant $\overline A$ is called the flux limiter and explains how the flux is reduced at the origin (by the presence of the perturbation). We refer to \cite{IMZ} for more explanations on this flux limiter.
		
		The goal of this paper is to prove that the solution of \eqref{P.en.epsilon} converges to the solution of \eqref{P.jonction.limite} as $\e\to 0$. The homogenization result is given in the following theorem 
		\begin{thm}[Stochastic homogenization result]\label{Theoreme d homog du prob en eps}
			Assume that ${\bf (H)}$ is satisfied, $u_{0}$ is a Lipschitz continuous function and for $\eps >0$, let $u^{\eps}$ be the solution of \eqref{P.en.epsilon}. Then there exists two deterministic, convex and coercive hamiltonians $\overline{H}_{L}$ and $\overline{H}_{R}$,  a deterministic flux limiter $\overline A$ %
			and an event $\o_{0} \subseteq \o$ of full probability, such that for each $\oo \in \o_{0}$, the unique solution $u^{\eps}(\cdot,\cdot,\oo)$ of \eqref{P.en.epsilon} converges locally uniformly in $\R$, as $\eps\to 0$, to the unique deterministic solution $u$ of \eqref{P.jonction.limite}.
		\end{thm}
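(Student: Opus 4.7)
The plan is to follow the perturbed test function method of Evans, adapted to the stochastic junction setting, and to split the analysis cleanly between the bulk (away from the origin, where $H$ is asymptotically stationary ergodic) and the junction point $y=0$ (where stationarity fails).

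First, I would carry out the standard stochastic homogenization for the three stationary ergodic Hamiltonians $H_L, H_R$ (and $H_0$ under \textbf{(H7-$\varepsilon$)}) separately on the whole line. Using assumptions \textbf{(H1)}--\textbf{(H6)}, \textbf{(H8)}, \textbf{(H9)}, an application of the Souganidis/Lions--Papanicolaou--Varadhan construction of sublinear correctors — combined with the quantitative bounds from \cite{homog.stoch.quantitative.premier.ordre.concexe} that are essential later — produces deterministic, convex, coercive effective Hamiltonians $\overline H_L, \overline H_R, \overline H_0$ on a full-probability event $\Omega_1$. Because we only need these effective Hamiltonians where $H$ eventually coincides with $H_\alpha$ (i.e.\ for $|y|$ much larger than the width of the perturbation), standard perturbed test function arguments give, for each fixed $\omega\in\Omega_1$, that any relaxed half-limit $\overline u = \limsup{}^{\!*} u^\varepsilon$, $\underline u = \liminf_* u^\varepsilon$ satisfies respectively the subsolution and supersolution property for $u_t+\overline H_{L}(Du)=0$ on $\{y<0\}$ and $u_t+\overline H_R(Du)=0$ on $\{y>0\}$.

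The heart of the proof is the junction condition at $y=0$, which must yield a \emph{deterministic} flux limiter $\overline A$. I would construct $\overline A$ as a limit of long-time averages of a stationary cell-type problem on a large interval centered at the origin. Concretely, for each $R>0$, consider the ergodic constant $\lambda_R(\omega)$ associated with the stationary problem $\lambda + H(Dv, y,\omega)=0$ posed on $(-R,R)$ with state-constraint-type or Dirichlet-type boundary conditions transplanted from the bulk effective Hamiltonians; in case \textbf{(H7-WFL)} the perturbation zone is of finite (unit) length so $\lambda_R$ stabilizes directly, while in case \textbf{(H7-$\varepsilon$)} the perturbation zone has length $1/\sqrt\varepsilon$ and one must additionally exploit the ergodicity of $H_0$ inside the perturbation to obtain a deterministic limit — this is precisely the role of the rate $\varepsilon^{-1/2}\to\infty$ promised in the remark on the radius of perturbation. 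Using the stationarity of $H_\alpha$ under $\tau_z$, the unit-range independence \textbf{(H9)}, and a subadditive/ergodic argument together with quantitative fluctuation bounds, I would show $\lambda_R(\omega)\to\overline A$ as $R\to\infty$ almost surely, with an explicit formula such as $\overline A = \max\bigl(\min\overline H_L,\ \min\overline H_R,\ \limsup_R \mathbb E[\lambda_R]\bigr)$ that encodes the slowdown due to the perturbation.

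Once $\overline A$ is in hand, the next step is to build, for each slope pair $(p_L,p_R)$ allowed by the junction, approximate junction correctors $w^\varepsilon(y,\omega)$ satisfying
\begin{equation*}
\overline A + H\!\left(Dw^\varepsilon,\tfrac{y}{\varepsilon},\omega\right)\le o(1)\quad\text{and analogously for supercorrectors,}
\end{equation*}
with $w^\varepsilon(y,\omega)\to p_L y\,\mathbf 1_{y<0}+p_R y\,\mathbf 1_{y>0}$ uniformly on compact sets. These are produced by gluing the stationary ergodic correctors of $\overline H_L,\overline H_R$ far from the origin to the solution of the cell problem on $(-R/\varepsilon,R/\varepsilon)$ near it, with matching ensured by the coercivity and the unique lower bound \textbf{(H6)}. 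Plugging $w^\varepsilon$ into the perturbed test function machinery at $y=0$ yields the junction relation $u_t+\max(\overline A,\overline H_L^+(Du(0^-)),\overline H_R^-(Du(0^+)))=0$ in the viscosity sense for both half-relaxed limits. The proof is concluded by the comparison principle for junction Hamilton--Jacobi equations (in the spirit of Imbert--Monneau), which forces $\overline u\le \underline u$ and hence full locally uniform convergence of $u^\varepsilon$ to the unique solution $u$ of \eqref{P.jonction.limite} on a full-probability event $\Omega_0\subseteq\Omega_1$.

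The main obstacle will be the almost sure existence and determinism of the flux limiter $\overline A$ in the regime \textbf{(H7-$\varepsilon$)}: since the perturbation zone itself is $\varepsilon$-dependent and the global Hamiltonian is \emph{not} stationary ergodic, one cannot invoke Birkhoff directly, and the classical corrector construction diverges. The quantitative stochastic homogenization estimates of \cite{homog.stoch.quantitative.premier.ordre.concexe} are needed here to control the fluctuations of $\lambda_R(\omega)$ at the rate required for the perturbation zone of length $\varepsilon^{-1/2}$ to self-average; without such rates the limit $\overline A$ could depend on $\omega$ and the convergence of $u^\varepsilon$ would be only stochastic, which is exactly what the counter-example in Section~\ref{sec:42} is designed to illustrate when the rate is removed.
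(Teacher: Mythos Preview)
Your overall architecture---perturbed test function method, separation into bulk and junction, use of the Imbert--Monneau comparison principle to close---matches the paper. But your handling of the flux limiter and the junction corrector diverges from the paper in a way that leaves real gaps.

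\textbf{The flux limiter.} You propose to define $\overline A$ as a limit of ergodic constants $\lambda_R(\omega)$ for cell problems on $(-R,R)$, and to prove determinism via a ``subadditive/ergodic argument together with quantitative fluctuation bounds''. This is where the proposal is vague and likely fails as stated: $H$ is not stationary, so no ergodic theorem applies to $\lambda_R$, and there is no obvious subadditivity in $R$ for a cell constant on a centered interval with boundary data coming from two different Hamiltonians. The paper avoids this entirely by exploiting \textbf{(H6)}: since $H(p,y,\omega)\ge H(0,y,\omega)$, constants are optimal subsolutions and one gets the \emph{explicit} formula $\tilde A(\omega)=\sup_{y\in\R}H(0,y,\omega)$. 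Determinism of $\tilde A$ then reduces to the elementary probabilistic statement that the supremum of $H_\alpha(0,\cdot,\omega)$ over a half-line (or over $[-\varepsilon^{-1/2},\varepsilon^{-1/2}]$ in case \textbf{(H7-$\varepsilon$)}) equals the full supremum almost surely, which follows from independence \textbf{(H9)} by a direct Borel--Cantelli/product argument (Theorems~\ref{le.flux.limitteur.est.deterministe} and~\ref{le.flux.limitteur.est.deterministe-eps}). No quantitative homogenization is used at this stage.

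\textbf{Where the quantitative estimates actually enter.} You place the Armstrong--Cardaliaguet--Souganidis fluctuation bounds at the level of the flux limiter. In the paper they appear instead in the homogenization of the \emph{metric problem} $H(Dm_\mu,y,\omega)=\mu$ at the junction (Theorem~\ref{fluctuations}): because $H$ is not stationary, one cannot invoke the subadditive ergodic theorem for $m_\mu(ty,tx,\omega)/t$, and the Azuma-martingale method on sublevel sets is what replaces it. This homogenized metric is then used to prove that the approximated corrector $v^\delta$ (constructed globally by Perron, not by gluing) satisfies $\delta v^\delta\to -\overline A$ and has the correct asymptotic slopes (Section~\ref{les resultats du correcteur app}).

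\textbf{The corrector.} Your gluing construction of $w^\varepsilon$ is plausible in spirit but would require matching across the transition zone, which is delicate when the bulk correctors are only sublinear and not bounded. The paper's route---global approximated corrector $v^\delta$, comparison with $m_\mu$ to pin down $\delta v^\delta(0)$, then slope control via comparison with $m^R_\mu$, $m^L_\mu$---is more robust and does not need any gluing.

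In short: keep your skeleton, but replace the $\lambda_R$ construction by the explicit formula $\tilde A(\omega)=\sup_y H(0,y,\omega)$ coming from \textbf{(H6)}, and redirect the quantitative machinery from the flux limiter to the non-stationary metric problem.
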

		
		\begin{rem}
		If Assumption ${\bf (H7}$-${\bf WFL)}$ is satisfied, we will see that 
$$\overline{A}=\max(\min\limits_{p}\overline{H}_{R}(p),\min\limits_{p}\overline{H}_{L}(p)).
$$
 From a traffic point of view, this means that the flux is not limited by the perturbation. On the contrary if ${\bf (H7}$-${\bf \eps)}$ is satisfied then 
 $$\overline{A}=\min\limits_p \overline H_0(p)\ge \max(\min\limits_{p}\overline{H}_{R}(p),\min\limits_{p}\overline{H}_{L}(p))
 $$ 
 and the flux can be limited.
		\end{rem}
		It is well known that to obtain a homogenization result, one have to introduce the so-called correctors. In the periodic case, the corrector is defined on the cell domain (see \cite {Evans, Evans2, Lions1}), whereas in the stochastic case, this corrector is defined on the wholedomain. Articles \cite{les.conditions.pour.l'existence.du.correcteur,existence ou non des correcteurs} prove that, in the general stochastic framework, the corrector does not exist. To solve this problem, we will classically use a sub-additive quantity which is a solution of an intermediate problem called the metric problem, see \cite {probleme.metrique}. To obtain the homogenization's result, it suffices to prove that the metric problem homogenizes.

		In this article, the sequence $\o_{0} \subseteq \o_{1} \subseteq\o_{2} \subseteq \dots$ is a sequence of subsets of $\o$ of full probability used to keep track of almost sure statements.
		\subsection{Organization of the paper} This article is organized as follow. In Section \ref{metric problem loins de zero}, we introduce the metric problems far from the junction and we recall some classical results about stochastic homogenization. Section \ref{metric problem sur la jonction} is devoted to the main results for the metric problem at the junction and in particular to the definition of the deterministic flux limiter. The proof of the homogenization's result for the metric problem at the junction is given in Section \ref{resultat d homogenization du probleme metrique}. In Section \ref{les resultats du correcteur app}, we give the main result for the approximated correctors and we prove in particular that the rescaled approximated correctors have the good slopes at infinity. Finally, in Section \ref{converg resultat} we conclude with the proof of the convergence result for the rescaled problem.    
		
		\section{Metric problem far from the origin}\label{metric problem loins de zero}
		In this section, we give some useful results obtained in \cite{homo.stoch.premier.ordre.quasiconvex,homo.stoch.premier.ordre.convexe} for an ergodic stationary hamiltonian.
		To define the deterministic hamiltonians $\overline{H}_{L}$ and $\overline{H}_{R}$, we consider two metric problems, defined by the left and right hamiltonians $H_{L} $ and $H_{R} $, as follows
		\begin{equation}\label{P.M.LOIN.J.avec.p}\left\{
		\begin{aligned}
		&H_{\a}\left(Dv(y,\oo)+p,y,\oo\right)=\mu,  & y \in \R \setminus \{0\}, \\
		&v(0,\oo)=0,
		\end{aligned}
		\right.
		\end{equation}with $\a \in \{L,R\}$. Recent works in stochastic homogenization prove that the set of solutions of \eqref{P.M.LOIN.J.avec.p} is included in $$ S'=\{    \phi; ~\mbox{such that}~\phi\in  \mathbf{Lip} ~\mbox{and strictly sub-linear}\}, $$ where $\mathbf{Lip}$ denotes the set of real-valued global Lipschitz functions on $\R$. This result is proven in \cite{homo.stoch.premier.ordre.quasiconvex,homo.stoch.premier.ordre.convexe}.
		\begin{proposition}[{{Existence of a solutions for the metric problems \cite[Proposition 3.2]{homo.stoch.premier.ordre.quasiconvex}}}]\label{pro:3.1}
			We set, for $\oo\in \o$, $$\mu^{\star}_{\alpha}(\oo) = \inf\{\mu;\mbox{ s.t there exists a function}~v \in \mathbf{Lip} ~\mbox{which satisfies}~H_{\a}(Dv,y,\oo) \le \mu \mbox{ in } \R\}. $$We have by stationarity and ergodicity, that $ \mu^{\star}_{\alpha}(\oo)=\mu^{\star}_{\alpha}$ almost surely on $\oo$. For all $\mu \ge \mu^{\star}_{\alpha}$, there exists a Lipschitz continuous solution $m_{\mu}^{\a}$ of\begin{equation}\label{P.M.LOIN.J}\left\{
			\begin{aligned}
			&H_{\a}\left(Dm_{\mu}^{\a}(y,0,\oo),y,\oo\right)=\mu  & y \in \R \setminus \{0\}, \\
			&m_{\mu}^{\a}(0,0,\oo)=0.
			\end{aligned}
			\right.
			\end{equation}For all $\mu >\inf\limits_{v \in S'}\sup\limits_{y \in \R}H_{\a}(Dv+p,y,\oo)$, the function $m_{\mu}^{\a}-py$ is the unique solution of \eqref{P.M.LOIN.J.avec.p} in $S'_{+}$, where $$S'_{+}=\{    \phi; ~\mbox{such that}~\phi \in \mathbf{Lip};~\liminf_{|y| \to +\infty}|y|^{-1}\phi(y)\ge 0 \}.$$
		\end{proposition}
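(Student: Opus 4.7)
The plan is to follow the standard program for metric problems in stochastic homogenization of stationary ergodic first-order equations. First I would establish that $\mu^\star_\alpha(\omega)$ is almost surely constant. Since $H_\alpha$ is stationary with respect to the ergodic group $(\tau_z)_z$, if $v$ is a Lipschitz subsolution of $H_\alpha(Dv,\cdot,\omega)\le\mu$ on $\R$, then $\tilde v(y):=v(y-z)$ is a Lipschitz subsolution of $H_\alpha(D\tilde v,\cdot,\tau_z\omega)\le\mu$. Hence $\mu^\star_\alpha(\cdot)$ is $\tau_z$-invariant for every $z\in\R$, so by ergodicity it coincides almost surely with a deterministic value $\mu^\star_\alpha\in\R$ (finiteness from below comes from {\bf (H2)}, from above from the existence of affine supersolutions guaranteed by the coercivity {\bf (H3)}).

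Next, for $\mu\ge\mu^\star_\alpha$, I would construct $m_\mu^\alpha$ by Perron's method. By definition of $\mu^\star_\alpha$ and an approximation argument, for any $\mu\ge\mu^\star_\alpha$ one obtains a global Lipschitz subsolution $w_0$ of $H_\alpha(Dw_0,\cdot,\omega)\le\mu$; subtracting $w_0(0)$ yields $w_0(0)=0$. The coercivity {\bf (H3)} provides a uniform Lipschitz bound $L(\mu)$ on all such subsolutions, so one may define
$$m_\mu^\alpha(y,0,\omega)=\sup\bigl\{w(y):w\in\mathbf{Lip},\ H_\alpha(Dw,\cdot,\omega)\le\mu,\ w(0)=0\bigr\}.$$
This is Lipschitz with constant $L(\mu)$, and standard viscosity envelope arguments together with the modulus of continuity {\bf (H4)} and convexity {\bf (H5)} show that $m_\mu^\alpha$ is a viscosity solution of $H_\alpha(Dm_\mu^\alpha,y,\omega)=\mu$ on $\R\setminus\{0\}$ with $m_\mu^\alpha(0,0,\omega)=0$.

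For the uniqueness in $S'_+$ under the strict inequality $\mu>\inf_{v\in S'}\sup_y H_\alpha(Dv+p,y,\omega)$, pick a strictly sublinear $v_0\in S'$ and $\delta>0$ with $H_\alpha(Dv_0+p,\cdot,\omega)\le\mu-\delta$. Given two solutions $u_1,u_2\in S'_+$ of the shifted problem, convexity {\bf (H5)} lets me form, for small $\eta>0$, the convex combination $u_1^\eta:=(1-\eta)u_1+\eta(v_0-py)$, which is a strict subsolution. The strict sublinearity of $u_2-u_1^\eta$ (inherited from $S'_+$ and the genuine strict sublinearity of $v_0$) forces the supremum in a standard doubling-of-variables comparison between $u_1^\eta$ and $u_2$ to be attained in a bounded set; the strict subsolution inequality then yields $u_1^\eta\le u_2$ in $\R$, and letting $\eta\to 0$ gives $u_1\le u_2$ and symmetrically the reverse.

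The main obstacle will be the last step, i.e.\ turning the $\liminf_{|y|\to\infty}|y|^{-1}\phi(y)\ge 0$ condition (which only bars negative linear growth, not sublinear growth) into a genuine compactness statement for the doubling-of-variables penalization. Without the strict inequality in $\mu$, the convex combination trick collapses because one loses the gain $\eta\delta$ that compensates the penalty terms at infinity, which is precisely why uniqueness is stated only in the regime $\mu>\inf_{v}\sup_y H_\alpha(Dv+p,y,\omega)$ rather than for the full range $\mu\ge\mu^\star_\alpha$. The role of the convexity assumption {\bf (H5)} in making the convex combination a subsolution, and of the modulus {\bf (H4)} in absorbing the error terms in the doubling-of-variables, will both be essential at this stage.
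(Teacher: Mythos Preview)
The paper does not give its own proof of this proposition: it is quoted verbatim from Armstrong--Souganidis \cite[Proposition~3.2]{homo.stoch.premier.ordre.quasiconvex}, as the header indicates, and Section~\ref{metric problem loins de zero} explicitly opens by saying it is only recalling known results for the stationary ergodic case. Your sketch is a faithful outline of that standard argument (invariance of $\mu^\star_\alpha$ under $\tau_z$, Perron construction of $m_\mu^\alpha$ as the supremum of normalized subsolutions, and a comparison principle obtained by convex-combining with a strict subsolution coming from the hypothesis $\mu>\inf_{v\in S'}\sup_y H_\alpha(Dv+p,y,\omega)$), and it is essentially correct.

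One minor bookkeeping point: in your convex combination $u_1^\eta=(1-\eta)u_1+\eta(v_0-py)$, the derivative gives $Du_1^\eta+p=(1-\eta)(Du_1+p)+\eta Dv_0$, so convexity yields $H_\alpha(Du_1^\eta+p)\le(1-\eta)\mu+\eta H_\alpha(Dv_0)$, which requires $H_\alpha(Dv_0,\cdot,\omega)\le\mu-\delta$ rather than $H_\alpha(Dv_0+p,\cdot,\omega)\le\mu-\delta$. The fix is simply to take $u_1^\eta=(1-\eta)u_1+\eta v_0$; then $Du_1^\eta+p=(1-\eta)(Du_1+p)+\eta(Dv_0+p)$ and your stated hypothesis on $v_0$ applies directly.

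For comparison, when the paper proves the analogous statement in the non-stationary setting (Proposition~\ref{principe de comparaison du probleme metrique sur la jonction} and Theorem~\ref{theo.existence.de.la.solution.du.pmj}), the comparison argument is organized slightly differently: instead of a single convex combination with a fixed sublinear $v_0$, it studies the set $E=\{\lambda\in[0,1]:\liminf_{|y|\to\infty}|y|^{-1}(v-\lambda u)\ge0\}$, shows $E$ is closed, and then uses convexity with the \emph{zero} function (via {\bf (H6)}) to push $\lambda$ past $\sup E$. Your route and theirs are interchangeable here; theirs has the mild advantage of not needing to invoke an auxiliary $v_0\in S'$, at the cost of the extra structural assumption {\bf (H6)}.
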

		\begin{proposition}[Properties of $m_{\mu}^{\a}$ {{\cite[Proposition 3.2]{homo.stoch.premier.ordre.quasiconvex}}}]
			The solution $m_{\mu}^{\a}$ is the maximal sub-solution of \eqref{P.M.LOIN.J} in the sense that, if $v$ is a Lipschitz continuous function and is sub-solution of $H_{\alpha}(Dv,y,\oo)\le\mu  $ in $\R$ then $$v(\cdot,\oo)-v(0,\oo)\le m_{\mu}^{\a}(\cdot,0,\oo)~\mbox{in}~\R .$$ 
Moreover, $m_\mu^\a$ is sub-additive i.e for every $x,y,z \in \R$ $$ m_{\mu}^{\a}(z,x,\oo)\le m_{\mu}^{\a}(y,x,\oo)+m_{\mu}^{\a}(z,y,\oo).$$
		\end{proposition}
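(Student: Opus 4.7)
The proof has two parts, and the second is a short consequence of the first, so the plan is to treat the maximal sub-solution property first and then derive sub-additivity from it.

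For the maximality claim, the cleanest route is to recognise that $m_\mu^\a(\cdot,0,\oo)$ is (or can be chosen as) the supremum of all Lipschitz sub-solutions of $H_\a(Dw,\cdot,\oo)\le\mu$ in $\R$ that vanish at $0$. More precisely, starting from a Lipschitz sub-solution $v$ with $v(0,\oo)=0$, one checks using uniform coercivity $\bf (H3)$ that the Lipschitz constant of $v$ is bounded by a constant depending only on $\mu$, and then that the pointwise supremum of such sub-solutions is itself a Lipschitz sub-solution (using convexity $\bf (H5)$ and the stability of viscosity sub-solutions under suprema of equi-Lipschitz families). Calling this supremum $M(y,\oo)$, one verifies via a standard Perron argument that $M$ solves the equation $H_\a(DM,y,\oo)=\mu$ on $\R\setminus\{0\}$ with $M(0,\oo)=0$, hence coincides with $m_\mu^\a(\cdot,0,\oo)$ (up to the uniqueness granted by Proposition \ref{pro:3.1} once one selects the right growth class). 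Given an arbitrary Lipschitz sub-solution $v$ on $\R$, the translated function $y\mapsto v(y,\oo)-v(0,\oo)$ is still a Lipschitz sub-solution vanishing at $0$, hence is dominated by $m_\mu^\a(\cdot,0,\oo)$, which is exactly the claim.

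For sub-additivity, fix $x,y\in\R$ and define
\begin{equation*}
v(z):=m_\mu^\a(z,x,\oo)-m_\mu^\a(y,x,\oo).
\end{equation*}
Then $v$ is Lipschitz on $\R$ and, being the translate by a constant of the map $z\mapsto m_\mu^\a(z,x,\oo)$, it is a sub-solution of $H_\a(Dv,z,\oo)\le\mu$ on all of $\R$ (at the single point $z=x$ this again uses convexity together with the fact that $m_\mu^\a(\cdot,x,\oo)$ is a sub-solution from both sides in the sense of viscosity). Moreover $v(y)=0$. Applying the maximality property proved above, but now centred at the base point $y$ instead of $0$ (the construction of $m_\mu^\a$ is translation-covariant in its base point), we obtain
\begin{equation*}
v(z)\le m_\mu^\a(z,y,\oo)\quad\text{for all }z\in\R,
\end{equation*}
which rearranges to the announced inequality $m_\mu^\a(z,x,\oo)\le m_\mu^\a(y,x,\oo)+m_\mu^\a(z,y,\oo)$.

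The main obstacle is really the first step: verifying carefully that the supremum over all admissible Lipschitz sub-solutions is itself a sub-solution and coincides with $m_\mu^\a(\cdot,0,\oo)$. The equi-Lipschitz bound coming from $\bf (H3)$ and the convexity assumption $\bf (H5)$ are both essential here; without convexity the sup of sub-solutions need not be a sub-solution. Once maximality is in hand, the rest is the classical one-line deduction via base-point translation.
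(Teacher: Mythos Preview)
Your approach is correct and matches the paper's. The paper itself cites this proposition from \cite{homo.stoch.premier.ordre.quasiconvex} without reproducing the proof, but in Section~\ref{metric problem sur la jonction} it proves the analogous statement for the non-stationary Hamiltonian $H$ (Theorem~\ref{theo.existence.de.la.solution.du.pmj} and the representation formula~\eqref{repres.de.la.solu.de.pmj.pour.tout.x}) via exactly the Perron-type construction you outline: $m_\mu$ is \emph{defined} as the pointwise supremum over all global Lipschitz sub-solutions vanishing at the base point, so maximality is immediate from the definition, and sub-additivity then follows by applying maximality to the shifted sub-solution $z\mapsto m_\mu(z,x,\oo)-m_\mu(y,x,\oo)$ centred at $y$, precisely as you do.

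One small correction: you write that ``without convexity the sup of sub-solutions need not be a sub-solution,'' but this is not so. Stability of viscosity sub-solutions under suprema of locally uniformly bounded families is a general fact (it is the basis of Perron's method) and does not require convexity of the Hamiltonian in $p$. Convexity \textbf{(H5)} is used elsewhere in the paper---notably in the comparison principle, Proposition~\ref{principe de comparaison du probleme metrique sur la jonction}---but not for this step. Likewise, the worry you flag about $m_\mu^\a(\cdot,x,\oo)$ being a sub-solution at the base point $z=x$ is unfounded: since $m_\mu^\a(\cdot,x,\oo)$ is by construction a supremum of sub-solutions on all of $\R$, it is itself a sub-solution on all of $\R$, including at $x$, so no separate argument is needed there.
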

		\begin{prop}[{{Homogenization of the metric problem  \cite[Proposition 4.1]{homo.stoch.premier.ordre.quasiconvex}}}\label{existence des solutions de pmlj}]\label{pro:3.3}
		There exists an event $\o_{3}\subseteq \o$ of full probability and a function $\overline{m}_{\mu}^{\a}$ such that, for every $\oo \in \o_{3}$, $y \in \R$ and $\mu \ge \mu^{\star}_{\alpha} $ we have 
			 \begin{equation}\label{conver de l SOLUT DE PMLJ}
			\frac{m_{\mu}^{\a}(ty,0,\oo)}{t} \to \overline{m}_{\mu}^{\a}(y)~~~~\textrm{as } t\to \infty.	
			\end{equation} 
		\end{prop}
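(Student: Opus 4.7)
My plan is to apply the subadditive ergodic theorem (in the spirit of Kingman) to $m_\mu^\alpha$, using the three ingredients inherited from Proposition 3.2 above: its sub-additivity, its uniform Lipschitz regularity in $y$ (which follows from the coercivity and boundedness assumptions $\mathbf{(H1)}$--$\mathbf{(H3)}$), and a stationarity property with respect to the group $(\tau_z)_{z\in\R}$ that we first have to derive.

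The first step is to upgrade the stationarity of $H_\alpha$ (Assumption $\mathbf{(H8)}$) into a stationarity relation for $m_\mu^\alpha$. Because $m_\mu^\alpha(\cdot,x,\omega)$ is characterised as the maximal Lipschitz sub-solution of $H_\alpha(Dv,\cdot,\omega)\le\mu$ vanishing at $x$, the functions $y\mapsto m_\mu^\alpha(y+z,x+z,\omega)$ and $y\mapsto m_\mu^\alpha(y,x,\tau_z\omega)$ satisfy the same metric problem, hence coincide for every $x,y,z\in\R$ and $\omega\in\Omega$. Setting $x=0$ and specialising to $z=sy$ yields the cocycle identity $m_\mu^\alpha(ty,sy,\omega)=m_\mu^\alpha((t-s)y,0,\tau_{sy}\omega)$, which is precisely the form required to feed a subadditive ergodic theorem.

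Fixing $y\in\R$ and $\mu\ge\mu_\alpha^\star$ and combining the above with the sub-additivity $m_\mu^\alpha(ty,0,\omega)\le m_\mu^\alpha(sy,0,\omega)+m_\mu^\alpha(ty,sy,\omega)$, I see that the process $(t,\omega)\mapsto m_\mu^\alpha(ty,0,\omega)$ is a stationary sub-additive cocycle for the shift $\omega\mapsto\tau_y\omega$. Kingman's subadditive ergodic theorem, together with the ergodicity provided by the unit range of dependence $\mathbf{(H9)}$, then yields almost sure convergence of $t^{-1}m_\mu^\alpha(ty,0,\omega)$ along integers to a deterministic limit, which I denote $\overline m_\mu^\alpha(y)$. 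The uniform Lipschitz bound on $m_\mu^\alpha$ in $y$ upgrades this to convergence for continuous $t\to+\infty$ and shows that $\overline m_\mu^\alpha$ is finite, Lipschitz and positively homogeneous. For $y<0$, the same argument applies with the translation $\tau_{sy}$; the case $y=0$ is trivial since $m_\mu^\alpha(0,0,\omega)=0$.

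The last step is to produce a single event $\Omega_3$ of full probability on which the convergence holds simultaneously for every $y\in\R$ and every $\mu\ge\mu_\alpha^\star$. I take the intersection of the previous full-probability events over a countable dense subset of $\R\times[\mu_\alpha^\star,+\infty)$ and propagate to the full range using the uniform Lipschitz regularity of $m_\mu^\alpha$ in $y$, together with the monotone, locally Lipschitz dependence of $m_\mu^\alpha$ on the parameter $\mu$. The delicate point, in my view, is precisely this last propagation: one needs a modulus of continuity for $\mu\mapsto t^{-1}m_\mu^\alpha(ty,0,\omega)$ that is uniform in both $t$ and $\omega$, so that the convergence survives perturbation of the dense parameters to arbitrary $(y,\mu)$. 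This uniform control constitutes the main technical work and relies on a comparison-principle argument between $m_\mu^\alpha$ and $m_{\mu'}^\alpha$ combined with the coercivity $\mathbf{(H3)}$; once it is secured, the statement of the proposition follows.
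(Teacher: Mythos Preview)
Your approach is correct and is exactly the standard subadditive-ergodic-theorem route used in the cited reference (Armstrong--Souganidis, \cite{homo.stoch.premier.ordre.quasiconvex}); the present paper does not supply its own proof of this proposition but quotes it as background for the stationary Hamiltonians $H_\alpha$. One small remark: ergodicity here comes directly from the standing assumption that $(\tau_z)_{z\in\R}$ is an ergodic group, not from $\mathbf{(H9)}$ (the finite range of dependence is only needed later, in Section~\ref{resultat d homogenization du probleme metrique}, where stationarity fails and the subadditive ergodic theorem must be replaced by concentration estimates).
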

		
		We can now define the deterministic hamiltonians $\overline{H}_{\a}$. 
				\begin{definition}[Definition of the effective hamiltonians far from the junction]
		
			The deterministic hamiltonians are given by $$ \overline{H}_{\a} (p)=\inf\limits_{\phi \in S'}\sup \limits_{y\in \R}[H_{\a}(D\phi+p,y,\oo)] =\inf\{\mu;\overline{m}_{\mu}^{\a}\ge p.y; \forall y\}.$$
					It's immediate from this definition and Assumptions ${\bf (H)}$ that $\overline{H}_{\a}$ are continuous, coercive and convex in $p$. 

		\end{definition}
		
		We also have the following results, which will be useful in the rest of the paper
		\begin{lem}[{{Representation formulae for $\overline{m}_{\mu}^{\alpha}$ \cite[Lemma 4.2]{homo.stoch.premier.ordre.quasiconvex}}}]\label{lem:3.5}
			For every $\mu >\mu^{\star}_{\alpha} $, \begin{equation}\label{definition de la solution du probleme metrique loin de la jonction}
			\overline{m}_{\mu}^{\alpha}(y)=\max\{p.y;\overline{H}_{\alpha}(p) \le\mu\}.\end{equation} 
		\end{lem}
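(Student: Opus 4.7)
My plan is to prove this by recognizing that $\overline{m}_{\mu}^{\alpha}$ is a convex, positively $1$-homogeneous function of $y$, and therefore equals the support function of some closed convex set $K_\mu\subset\R$. I will then show that this set is precisely $\{p:\overline{H}_{\alpha}(p)\le\mu\}$, which yields the claimed formula.

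The first step is to extract the structural properties of $\overline{m}_{\mu}^{\a}$ from its definition $\overline{m}_{\mu}^{\a}(y)=\lim_{t\to\infty}m_{\mu}^{\a}(ty,0,\oo)/t$ (Proposition \ref{pro:3.3}). Positive homogeneity is immediate: for $\lambda>0$,
\[
\overline{m}_{\mu}^{\a}(\lambda y)=\lim_{t\to\infty}\frac{m_{\mu}^{\a}(t\lambda y,0,\oo)}{t}=\lambda\lim_{s\to\infty}\frac{m_{\mu}^{\a}(sy,0,\oo)}{s}=\lambda\,\overline{m}_{\mu}^{\a}(y).
\]
Sub-additivity of $\overline{m}_{\mu}^{\a}$ in $y$ is inherited from that of $m_{\mu}^{\a}$ by passing to the limit along appropriate sequences. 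Together, sub-additivity and positive homogeneity imply convexity. Since $\overline{m}_{\mu}^{\a}$ is also Lipschitz (being the limit of uniformly Lipschitz sub-solutions bounded via ${\bf (H1)}$--${\bf (H3)}$), it is lower semicontinuous, so Fenchel--Moreau duality yields
\[
\overline{m}_{\mu}^{\a}(y)=\max_{p\in K_\mu^{\ast}}p\cdot y,\qquad K_\mu^{\ast}:=\{p\in\R:\ p\cdot y\le \overline{m}_{\mu}^{\a}(y)\ \text{for all }y\in\R\},
\]
the maximum being attained since coercivity forces $K_\mu^{\ast}$ to be compact.

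The second step is the identification $K_\mu^{\ast}=\{p:\overline{H}_{\alpha}(p)\le \mu\}$. By the definition
$\overline{H}_{\alpha}(p)=\inf\{\nu:\overline{m}_{\nu}^{\a}(y)\ge p\cdot y,\ \forall y\}$,
the inclusion $p\in K_\mu^{\ast}$ is equivalent to $\mu$ belonging to the set over which the infimum is taken at level $p$, and hence trivially implies $\overline{H}_{\alpha}(p)\le \mu$. Conversely, if $\overline{H}_{\alpha}(p)\le\mu$, I will use the monotonicity of $\nu\mapsto m_\nu^{\a}$ (and hence of $\nu\mapsto\overline{m}_\nu^{\a}$) combined with the definition of $\overline{H}_{\alpha}$ as an infimum: for any $\mu'>\mu\ge \overline{H}_{\alpha}(p)$ there exists $\nu\le\mu'$ with $\overline{m}_{\nu}^{\a}(y)\ge p\cdot y$, and monotonicity in $\nu$ gives $\overline{m}_{\mu'}^{\a}(y)\ge p\cdot y$; letting $\mu'\downarrow\mu$ and using that $\nu\mapsto \overline{m}_\nu^{\a}(y)$ is continuous from the right (which follows from the stability of the metric problem under monotone limits of $\nu$) yields $\overline{m}_{\mu}^{\a}(y)\ge p\cdot y$, so $p\in K_\mu^{\ast}$.

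The main obstacle is the delicate point in the converse inclusion: one must know that the sub-level sets $\{\overline{H}_{\a}\le\mu\}$ coincide with $K_\mu^{\ast}$ at the endpoint $\mu=\overline{H}_{\a}(p)$ rather than only in the strict interior. This is why the hypothesis $\mu>\mu_\a^{\star}$ is imposed: it places us in the regime where $\overline{H}_{\a}$ is finite and strictly above its minimum, so the sub-level set has non-empty interior and the monotone limit argument does not degenerate. Away from this subtlety, the argument is essentially a convex-analysis reformulation combined with the definitions of $\overline{H}_{\a}$ and $\overline{m}_{\mu}^{\a}$ given in Section \ref{metric problem loins de zero}.
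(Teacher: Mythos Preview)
The paper does not supply its own proof of this lemma; it is cited directly from Armstrong--Souganidis \cite{homo.stoch.premier.ordre.quasiconvex}. Your argument is correct and is essentially the standard one: positive homogeneity and sub-additivity of $\overline{m}_\mu^\a$ make it the support function of its polar set, and the definition $\overline{H}_\a(p)=\inf\{\nu:\overline{m}_\nu^\a(y)\ge p\cdot y\text{ for all }y\}$ then forces that polar set to coincide with the sub-level $\{\overline{H}_\a\le\mu\}$.

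Two minor technical remarks that would tighten the write-up. First, the sub-additivity of $\overline{m}_\mu^\a$ in $y$ is inherited via
\[
m_\mu^\a(t(y_1+y_2),0,\oo)\le m_\mu^\a(ty_1,0,\oo)+m_\mu^\a(ty_2,0,\tau_{ty_1}\oo),
\]
whose second term involves a $t$-dependent shift of $\oo$; the cleanest way to pass to the limit is to take expectations first (shift-invariant by stationarity) and use that $\overline{m}_\mu^\a(y)$ is also $\lim_t t^{-1}E[m_\mu^\a(ty,0,\cdot)]$. Second, the right-continuity of $\nu\mapsto\overline{m}_\nu^\a(y)$ on $(\mu_\a^\star,\infty)$ is most directly obtained by observing that $\nu\mapsto m_\nu^\a(y,0,\oo)$ is concave (convexity of $H_\a$ in $p$ makes convex combinations of sub-solutions admissible at the intermediate level), hence continuous on the open interval; this is more elementary than the stability argument you sketch.
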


		\begin{corollary}[{{New definition of $\mu^{\star}_{\alpha}$ \cite[Corollary 4.3]{homo.stoch.premier.ordre.quasiconvex}}}]\label{cor:3.6} We have, under Assumptions ${\bf (H)}$
			$$\mu^{\star}_{\alpha}=\min\limits_{p} \overline{H}_{\a} (p)=\overline{H}_{\a}(0). $$ 
		\end{corollary}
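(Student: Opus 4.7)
The statement splits into two equalities: (a) $\overline H_\alpha$ attains its minimum at $p=0$, and (b) this common minimum value equals $\mu^\star_\alpha$. A preliminary observation is that Assumption \textbf{(H6)}, stated for $H$, actually holds for the stationary hamiltonians $H_\alpha$ ($\alpha=L,R$) on the whole real line: on a half-line where $H\equiv H_\alpha$ (provided by \textbf{(H7-WFL)} or \textbf{(H7-}$\varepsilon$\textbf{)}), the bound $H_\alpha(p,y,\omega)\ge H_\alpha(0,y,\omega)$ is immediate, and the stationarity \textbf{(H8)} then spreads this inequality to every $y\in\R$ almost surely.

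For (a) I would work from the variational formula
$$\overline H_\alpha(p)=\inf_{\phi\in S'}\sup_{y\in\R} H_\alpha(D\phi+p,y,\omega).$$
The extended \textbf{(H6)} gives $H_\alpha(D\phi+p,y,\omega)\ge H_\alpha(0,y,\omega)$ for every admissible $\phi$, every $p$, and every $y$. Taking $\sup$ in $y$ and then $\inf$ in $\phi$ yields $\overline H_\alpha(p)\ge \sup_y H_\alpha(0,y,\omega)$. Plugging $\phi\equiv 0$ and $p=0$ into the same formula provides the matching bound $\overline H_\alpha(0)\le \sup_y H_\alpha(0,y,\omega)$. Combining these two displays one deduces $\overline H_\alpha(0)=\sup_y H_\alpha(0,y,\omega)$ (in particular this $\sup$ is almost-surely deterministic, in line with ergodicity) and $\overline H_\alpha(p)\ge \overline H_\alpha(0)$ for every $p$.

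For (b) the upper bound $\mu^\star_\alpha\le \overline H_\alpha(0)$ is obtained by testing the infimum defining $\mu^\star_\alpha$ with $v\equiv 0$, which by (a) is a Lipschitz sub-solution of $H_\alpha(Dv,y,\omega)\le \overline H_\alpha(0)$. For the matching lower bound I would invoke Proposition~\ref{pro:3.3} and Lemma~\ref{lem:3.5}: for any $\mu>\mu^\star_\alpha$, the sub-additive limit $\overline m_\mu^\alpha$ exists as a finite Lipschitz function and equals $\max\{p\cdot y:\overline H_\alpha(p)\le\mu\}$, which forces the sub-level set $\{\overline H_\alpha\le\mu\}$ to be non-empty. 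Hence $\min_p\overline H_\alpha(p)\le\mu$, and letting $\mu\downarrow \mu^\star_\alpha$ concludes.

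\textbf{Main obstacle.} The only genuinely delicate point is the preliminary extension of \textbf{(H6)} from $H$ to $H_\alpha$ on all of $\R$; without it there is no reason for the minimum of $\overline H_\alpha$ to be attained at $p=0$ and the corollary would fail. Once that observation is in place, the remaining argument is elementary bookkeeping between the two equivalent expressions of $\overline H_\alpha$ stated just before the corollary and the representation formula of Lemma~\ref{lem:3.5}.
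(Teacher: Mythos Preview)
The paper does not actually give a proof of this corollary: it is quoted verbatim from \cite[Corollary 4.3]{homo.stoch.premier.ordre.quasiconvex}, so there is no ``paper's own proof'' to compare against. What the cited reference establishes is the identity $\mu^\star_\alpha=\min_p\overline H_\alpha(p)$ for general stationary ergodic coercive convex hamiltonians; the additional claim that the minimum is attained at $p=0$ is specific to the present paper and relies on \textbf{(H6)}.

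Your argument is correct. The preliminary step --- transferring \textbf{(H6)} from $H$ to each $H_\alpha$ via \textbf{(H7)} on a half-line and then to all of $\R$ via stationarity \textbf{(H8)} --- is exactly the right observation, and it in fact holds for every $\omega$, not merely almost surely. Part (a) is then immediate from the variational formula, as you write.

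For part (b), your route through Lemma~\ref{lem:3.5} works, but it is a detour. Once you have the extended \textbf{(H6)}, the bound $\mu^\star_\alpha\ge\overline H_\alpha(0)$ follows in one line directly from the definition of $\mu^\star_\alpha$: any Lipschitz $v$ with $H_\alpha(Dv,y,\omega)\le\mu$ in $\R$ satisfies $\mu\ge H_\alpha(Dv(y),y,\omega)\ge H_\alpha(0,y,\omega)$ for all $y$, hence $\mu\ge\sup_y H_\alpha(0,y,\omega)=\overline H_\alpha(0)$ by your computation in (a). This avoids invoking the homogenization machinery (Proposition~\ref{pro:3.3} and Lemma~\ref{lem:3.5}) for what is really an elementary inequality. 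Your argument via the non-emptiness of the sub-level sets is nonetheless valid and has the merit of not using \textbf{(H6)} for that half of the inequality, which is closer in spirit to how the general result in \cite{homo.stoch.premier.ordre.quasiconvex} is obtained.
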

		\begin{prop}[{{Equation satisfied by  $\overline{m}_{\mu}^{\a}$ \cite[Corollary 4.5]{homo.stoch.premier.ordre.quasiconvex}}}] \label{probleme limite des right and left hamiltonian}
			For all $\mu \ge \mu^{\star}_{\alpha}$, the function $\overline{m}_{\mu}^{\a}$ is the solution of the following problem \begin{equation}\label{pmljlimite}   \left\{
			\begin{aligned}
			&\overline{H}_{\a}(D\overline{m}_{\mu}^{\a}(y))=\mu,  & y \in \R \setminus \{0\}, \\
			&\overline{m}_{\mu}^{\a}(0)=0,
			\end{aligned}
			\right.\end{equation}
		\end{prop}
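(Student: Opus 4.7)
My plan is to read off the explicit form of $\overline{m}_{\mu}^{\a}$ from the representation formula of Lemma~\ref{lem:3.5}, verify the PDE on each half-line by direct differentiation, and treat the critical level $\mu=\mu^{\star}_{\a}$ separately by a monotone limit.

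For $\mu > \mu^{\star}_{\a}$, Lemma~\ref{lem:3.5} gives $\overline{m}_{\mu}^{\a}(y)=\max\{py:\overline{H}_{\a}(p)\le\mu\}$. Since $\overline{H}_{\a}$ is continuous, convex and coercive, and $\mu^{\star}_{\a}=\overline{H}_{\a}(0)$ by Corollary~\ref{cor:3.6}, the sub-level set $E_{\mu}=\{p\in\R:\overline{H}_{\a}(p)\le\mu\}$ is a compact interval $[p_{\mu}^{-},p_{\mu}^{+}]$ containing $0$ in its interior, and by continuity $\overline{H}_{\a}(p_{\mu}^{\pm})=\mu$. Maximising the linear map $p\mapsto py$ over $E_{\mu}$ yields
\[
\overline{m}_{\mu}^{\a}(y)=\begin{cases} p_{\mu}^{+}\, y & \text{if } y\ge 0, \\ p_{\mu}^{-}\, y & \text{if } y\le 0,\end{cases}
\]
so $\overline{m}_{\mu}^{\a}(0)=0$, and on each half-line $\overline{m}_{\mu}^{\a}$ is $C^{1}$ with constant derivative $p_{\mu}^{\pm}$. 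Hence $\overline{H}_{\a}(D\overline{m}_{\mu}^{\a}(y))=\overline{H}_{\a}(p_{\mu}^{\pm})=\mu$ classically on $\R\setminus\{0\}$, which in particular yields the equation in the viscosity sense.

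For the critical level $\mu=\mu^{\star}_{\a}$ the formula of Lemma~\ref{lem:3.5} is not directly available, so I would pass to the limit along a sequence $\mu_{n}\downarrow\mu^{\star}_{\a}$. The slopes $p_{\mu_{n}}^{\pm}$ are bounded uniformly in $n$ by coercivity of $\overline{H}_{\a}$, so $\{\overline{m}_{\mu_{n}}^{\a}\}$ is equi-Lipschitz and, via the maximality characterisation of $m_{\mu}^{\a}$, monotone in $\mu$. An Arzel\`a--Ascoli argument then yields local uniform convergence $\overline{m}_{\mu_{n}}^{\a}\to \overline{m}_{\mu^{\star}_{\a}}^{\a}$, with the identification of the limit supplied by the pointwise convergence of Proposition~\ref{pro:3.3}. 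Stability of viscosity solutions under uniform convergence then transfers the PDE from level $\mu_{n}$ to level $\mu^{\star}_{\a}$, and the boundary value $\overline{m}_{\mu^{\star}_{\a}}^{\a}(0)=0$ is preserved in the limit. The routine part is the explicit computation on each half-line; the main obstacle is precisely this critical level, where $E_{\mu^{\star}_{\a}}$ may degenerate (to $\{0\}$, or to an interval on which $\overline{H}_{\a}$ is constant) and one must check that the piecewise-linear structure survives the approximation and indeed produces a viscosity solution rather than merely an a.e.\ solution.
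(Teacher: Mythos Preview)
The paper does not prove this proposition; it is imported from \cite[Corollary~4.5]{homo.stoch.premier.ordre.quasiconvex}. The method there, and the one this paper does spell out for the analogous junction statement (Proposition~\ref{proposition de la limite de la solution du probleme limite}), is the perturbed test function method: one touches $\overline{m}_{\mu}^{\a}$ from above by a smooth $\phi$, corrects $\phi$ by a rescaled microscopic solution $\eta\, m_{\mu'}^{\a}(\cdot/\eta,\cdot/\eta,\oo)$ at a nearby level $\mu'$, and derives a contradiction via comparison and Proposition~\ref{pro:3.3}. That argument does not rely on the representation formula and works in any dimension.

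Your route is genuinely different and, in this one-dimensional setting, more direct: for $\mu>\mu^{\star}_{\a}$ Lemma~\ref{lem:3.5} hands you the piecewise-linear form and the equation is verified classically on each half-line. That part is correct and is a real simplification over the perturbed test function approach, at the cost of being specific to dimension one.

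The weakness is at the critical level. Your stability scheme is sound, but the identification of the Arzel\`a--Ascoli limit of $\overline{m}_{\mu_{n}}^{\a}$ with $\overline{m}_{\mu^{\star}_{\a}}^{\a}$ is not ``supplied by the pointwise convergence of Proposition~\ref{pro:3.3}'': that proposition concerns the limit in $t$ for each fixed $\mu$, not continuity of $\mu\mapsto\overline{m}_{\mu}^{\a}$ at $\mu^{\star}_{\a}$. Monotonicity in $\mu$ yields only one inequality, $\lim_{n}\overline{m}_{\mu_{n}}^{\a}\ge\overline{m}_{\mu^{\star}_{\a}}^{\a}$; the reverse amounts to interchanging the limits $t\to\infty$ and $\mu\downarrow\mu^{\star}_{\a}$, which is not automatic. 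You correctly flag this as ``the main obstacle'', but the proposal as written does not close it.
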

\begin{prop}[{{Approximated correctors far from the junction \cite[Lemma 5.1]{homo.stoch.premier.ordre.quasiconvex}}}]\label{pro:homo.stoch.premier.ordre.quasiconvex}
For all $\d>0$, $\oo \in \o$ and $p\in \R$, there exists a unique bounded solution $v^{\d}_{\a}(\cdot,\oo,p)$	of $$ \d v^{\d}_{\a}+H_{\a}(Dv^{\d}_{\a}+p,y,\oo)=0 \mbox{ in }\R  .$$ In addition, there exists an event $\o_{2}\subset\o$ of full probability such that for all $\oo \in \o_{2}$ $$ \d v^{\d}_{\a}(y,\oo,p) \to -\overline{H}_{\a}(p)\mbox{ in a ball of radius }1/\d. $$	
		\end{prop}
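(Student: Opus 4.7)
The plan has two stages: well-posedness of $v^{\d}_{\a}$ with uniform estimates, and identification of the limit of $\d v^{\d}_{\a}$ via the metric-problem homogenization of Proposition~\ref{pro:3.3}.

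\textbf{Well-posedness and a priori bounds.} By {\bf (H2)}, for each fixed $p\in\R$ the Hamiltonian $H_{\a}(p,\cdot,\cdot)$ is bounded by some $C(p)$ uniformly in $(y,\oo)$, so the constants $\pm C(p)/\d$ are a sub- and a super-solution of $\d v+H_{\a}(Dv+p,y,\oo)=0$. Perron's method combined with the comparison principle for bounded discounted first-order viscosity solutions on $\R$ (applicable since the zeroth-order term $\d v$ is strictly monotone) yields a unique bounded solution $v^{\d}_{\a}$ with $\|\d v^{\d}_{\a}\|_{\infty}\le C(p)$ uniformly in $\d,\oo$. Reinserting into the equation gives $|H_{\a}(Dv^{\d}_{\a}+p,y,\oo)|\le C(p)$, and the uniform coercivity {\bf (H3)} then forces a uniform Lipschitz bound $|Dv^{\d}_{\a}|\le K(p)$, independent of $\d,y,\oo$.

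\textbf{Rescaling.} Set $w^{\d}(z,\oo):=\d v^{\d}_{\a}(z/\d,\oo,p)$; the stated conclusion becomes $w^{\d}\to -\overline H_{\a}(p)$ uniformly on the closed unit ball $\overline B_{1}$. The function $w^{\d}$ is uniformly bounded by $C(p)$, Lipschitz with constant $K(p)$, and satisfies
\begin{equation*}
w^{\d}(z)+H_{\a}\!\left(Dw^{\d}(z)+p,\tfrac{z}{\d},\oo\right)=0\quad\text{in }\R.
\end{equation*}
By Arzelà--Ascoli, along any sequence $\d_{n}\downarrow 0$ a subsequence satisfies $w^{\d_{n}}\to w^{*}$ locally uniformly, with $w^{*}$ bounded and Lipschitz; it suffices to identify $w^{*}\equiv -\overline H_{\a}(p)$.

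\textbf{Identification via the metric problem.} Restrict to the full-probability event $\o_{2}\subset\o_{3}$ obtained by intersecting $\o_{3}$ (Proposition~\ref{pro:3.3}) over a countable dense family of $\mu\ge\mu_{\a}^{\star}$; on $\o_{2}$, $\d m_{\mu}^{\a}(\cdot/\d,0,\oo)\to \overline m_{\mu}^{\a}(\cdot)$ locally uniformly for every chosen $\mu$. For the upper bound, fix $\mu>\overline H_{\a}(p)$; by Lemma~\ref{lem:3.5}, $\overline m_{\mu}^{\a}(z)\ge pz$ for every $z$. The rescaled test function $\Phi^{\d}(z):=\d m_{\mu}^{\a}(z/\d,0,\oo)-pz$ converges locally uniformly to $\overline m_{\mu}^{\a}(z)-pz\ge 0$ and solves $H_{\a}(D\Phi^{\d}+p,z/\d,\oo)=\mu$, so $\Phi^{\d}-\mu$ is, up to an $o(1)$ correction making it non-negative on a fixed compact, a super-solution of the equation for $w^{\d}$. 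A localized comparison on a compact containing $\overline B_{1}$ (using the uniform bounds on $w^{\d}$ to absorb boundary terms) yields $w^{\d}(z)\le \Phi^{\d}(z)-\mu+o(1)$; evaluated at $z=0$ and letting first $\d\to 0$, then $\mu\downarrow \overline H_{\a}(p)$, gives $w^{*}(0)\le -\overline H_{\a}(p)$, and stationarity of $H_{\a}$ extends the bound to every $z\in\overline B_{1}$ by recentering the metric problem at arbitrary base points. The matching lower bound is obtained by a symmetric sub-corrector construction relying on convexity {\bf (H5)} (equivalently, via the Lagrangian control representation of $v^{\d}_{\a}$).

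\textbf{Main obstacle.} The principal subtlety is that the test function $\Phi^{\d}$ is only sublinear rather than bounded, so the comparison principle on $\R$ cannot be invoked directly against the bounded $w^{\d}$; a local comparison on compacts together with the uniform Lipschitz bound is required to close the estimate. The lower bound is the more delicate half, since the metric problem directly produces only super-correctors, and sub-correctors have to be reconstructed through the dual/Lagrangian picture enabled by {\bf (H5)}.
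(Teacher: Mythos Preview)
The paper does not supply its own proof of this proposition: it is quoted verbatim as \cite[Lemma~5.1]{homo.stoch.premier.ordre.quasiconvex}, so there is nothing in the present text to compare against line by line. Your sketch follows the strategy of the cited reference (Armstrong--Souganidis): Perron/comparison for existence and uniform Lipschitz bounds, then identification of the limit of $\d v^{\d}_{\a}$ through the homogenized metric problem $\overline m_{\mu}^{\a}$.

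Two places in your write-up are too loose to stand as a proof. For the upper bound, you claim that $\Phi^{\d}-\mu$ is, ``up to an $o(1)$ correction,'' a super-solution of the $w^{\d}$-equation on a fixed compact, and then invoke a ``localized comparison \ldots\ using the uniform bounds on $w^{\d}$ to absorb boundary terms.'' But $(\Phi^{\d}-\mu)+H_{\a}(D\Phi^{\d}+p,\cdot,\oo)=\Phi^{\d}$, and $\Phi^{\d}$ is only guaranteed non-negative \emph{in the limit}; you need either to work on a large ball where the linear growth of $\overline m_{\mu}^{\a}(z)-pz$ (strict when $\mu>\overline H_{\a}(p)$) dominates the $o(1)$ error and the uniform bound $\|w^{\d}\|_{\infty}\le C(p)$, or to add a small penalization as in the proof of Proposition~\ref{convergence du correcteur approvhe a partir du probleme metrique } later in the paper. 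Either fix is routine, but it has to be written.

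The lower bound is the genuinely incomplete half. Saying it follows from ``a symmetric sub-corrector construction relying on convexity {\bf (H5)} (equivalently, via the Lagrangian control representation)'' is not a proof: the metric functions $m_{\mu}^{\a}$ only give super-correctors directly, and producing a sub-corrector requires either the optimal-control/Lagrangian formula for $v^{\d}_{\a}$ together with the sub-additive ergodic theorem applied to action minimizers, or the observation that $-m_{\mu}^{\a}(0,\cdot,\oo)$ (the metric with reversed arguments) furnishes the needed sub-solution. In the cited reference this step is carried out explicitly; here you have only named it. If you intend this as a self-contained argument, the lower bound needs at least a paragraph of actual construction.
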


\begin{rem}
Note that all the results presented in this section hold true for $H_0$.
\end{rem}
\section{Metric problem at the junction point}\label{metric problem sur la jonction}
\subsection{Definition of the metric problem at the junction and of the stochastic flux limiter}\label{sec:41}
This subsection is devoted to the definition of the flux limiter. In this subsection, we define a stochastic flux limiter. We will show in sub-section \ref{sec:43} that, under assumption ${\bf (H7)}$, this flux limiter is in fact deterministic. 
We begin by the definition of  the metric problem at the junction point: for $\oo \in \o$, $\mu \in \R$, we consider
		\begin{equation}\label{P.M.J}\left\{
		\begin{aligned}
		&H\left(Dm_{\mu}(y,0,\oo),y,\oo\right)=\mu,  & y \in \R \setminus \{0\}, \\
		&m_{\mu}(0,0,\oo)=0.
		\end{aligned}
		\right.
		\end{equation}
The main difficulty comes here from the fact that the hamiltonian $H$ is not stationary ergodic. As in the classical case, we start 
by the definition of  the smallest value such that the metric problem admits a sub-solution and we denote it by $\tilde{A}(\oo)$:
 $$ \tilde{A}(\oo)=\inf\{\mu; \mbox{ there exits}~v\in \mathbf{Lip};~ H(Dv,y,\oo) \le \mu~\mbox{in }	~\R\}. $$
Contrary to the classical case, this constant is a priori not deterministic. Nevertheless, using ${\bf (H7)}$, we will prove that it's deterministic.

We begin by a new definition for  $\tilde{A}(\oo)$.
\begin{lem}[New definition of $\tilde{A}(\oo)$]
Assume ${\bf (H1)}$-${\bf (H6)}$, then, for all $\oo\in \o$, the stochastic flux limiter $\tilde{A}(\oo)$ is given by $$\tilde{A}(\oo)=\inf\limits_{v \in \mathbf{Lip}} \sup_{y \in \R}  H(Dv,y,\oo)=\sup_{y \in \R}  H(0,y,\oo)<\infty.$$
\end{lem}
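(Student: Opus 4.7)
My plan is to establish the chain of equalities pointwise in $\oo$, using only Assumptions ${\bf (H2)}$, ${\bf (H5)}$ and ${\bf (H6)}$; no stationarity or ergodicity is needed, which is why the statement holds for every $\oo \in \o$. I expect the argument to be short and mostly bookkeeping, with the only conceptual input being the uniform lower bound ${\bf (H6)}$.

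First I would rewrite the original definition of $\tilde A(\oo)$ as an infimum--supremum. The constraint ``there exists $v \in \mathbf{Lip}$ such that $H(Dv,y,\oo)\le\mu$ in $\R$'' is, by convexity ${\bf (H5)}$, equivalent to the almost-everywhere pointwise inequality $H(Dv(y),y,\oo)\le\mu$, which in turn is equivalent to $\sup_{y\in\R}H(Dv(y),y,\oo)\le\mu$. Swapping the infima over $\mu$ and over $v$ therefore yields
$$\tilde A(\oo)=\inf_{v\in\mathbf{Lip}}\sup_{y\in\R}H(Dv,y,\oo),$$
which is the first equality.

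Next I would prove the equality $\inf_{v}\sup_{y}H(Dv,y,\oo)=\sup_{y}H(0,y,\oo)$ by two opposite inequalities. For ``$\le$'', I simply test with $v\equiv 0\in\mathbf{Lip}$, for which $Dv\equiv 0$ and the bound is immediate. For ``$\ge$'', I invoke ${\bf (H6)}$: for every $v\in\mathbf{Lip}$ and a.e.\ $y\in\R$,
$$H(Dv(y),y,\oo)\ge H(0,y,\oo),$$
so taking sup in $y$ and then inf over $v$ gives the reverse inequality. Finally, the finiteness $\sup_y H(0,y,\oo)<\infty$ comes for free from ${\bf (H2)}$ applied at $p=0$: $H(0,y,\oo)\le C_0\gamma$ uniformly in $(y,\oo)$.

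The proof has no real obstacle; the one subtlety worth noting is the interpretation of the inequality ``$H(Dv,y,\oo)\le\mu$ in $\R$'' for Lipschitz $v$. In the viscosity sense, this would pick up super-differentials at corners of $v$, but by ${\bf (H5)}$ the viscosity subsolution property is equivalent to the a.e.\ pointwise inequality, so the two readings coincide and the three-line chain of equalities above goes through cleanly.
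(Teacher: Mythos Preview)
Your proposal is correct and follows essentially the same approach as the paper: both arguments use ${\bf (H6)}$ to show that any admissible $v$ yields $\sup_y H(Dv,y,\oo)\ge\sup_y H(0,y,\oo)$, test with $v\equiv 0$ for the reverse inequality, and read off finiteness from ${\bf (H2)}$. Your explicit remark on the equivalence of the viscosity and a.e.\ interpretations via convexity ${\bf (H5)}$ is a helpful clarification that the paper leaves implicit, but otherwise the two proofs are the same.
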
 

\begin{proof} Note first that Assumption ${\bf (H6)}$ implies that
$$\inf\limits_{v \in \mathbf{Lip}} \sup_{y \in \R}  H(Dv,y,\oo)=\sup_{y \in \R}  H(0,y,\oo).$$
		 Let $\oo\in  \o $. We first prove that $$\tilde{A}(\oo)\ge \sup\limits_{y \in \R}  H(0,y,\oo). $$ By definition of $\tilde{A}(\oo)$, for $\eta >0$, there exists $\mu_{\eta}\in \R$ and $v_{\eta}\in \mathbf{Lip}$ such that $\mu_{\eta}< \tilde{A}(\oo)+\eta$, and $$H(Dv_{\eta},y,\oo )\le \mu_{\eta}<\tilde{A}(\oo)+\eta \mbox{ for all }y \in \R.$$ Assumption ${\bf (H6)}$, yields that 
		$$  \sup\limits_{y \in \R}  H(0,y,\oo)<\tilde{A}(\oo)+\eta \mbox{ for all }\eta>0 . $$Letting $\eta \to 0$, we obtain the result.   
		
To prove the reverse inequality, we consider $\mu=\sup\limits_{y \in \R}H(0,y,\oo)<\infty$. Then all constant function $v$ are solution of 
$$H(Dv,y,\oo)= H(0,y,\oo)\le \mu,$$
and so, we get $\tilde{A}(\oo)\le \mu$. This implies that $\tilde{A}(\oo)=\sup\limits_{y \in \R}  H(0,y,\oo)<\infty$ and concludes the proof. 
\end{proof}
		
We now give an existence result for the metric problem.
\begin{thm}[Existence of solution for the metric problem]\label{theo.existence.de.la.solution.du.pmj}
Assume ${\bf (H1)}$-${\bf (H6)}$.	For all $\oo \in \o$ and $\mu \ge \tilde{A}(\oo)$, there exists a solution $m_{\mu}(\cdot,0,\oo) \in \mathbf{Lip}$ of \eqref{P.M.J}.

		\end{thm}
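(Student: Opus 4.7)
My plan is to build $m_\mu(\cdot,0,\omega)$ by Perron's method as the pointwise supremum of admissible subsolutions, namely
\[
m_\mu(y,0,\omega) := \sup\bigl\{ v(y) : v \in \mathbf{Lip},\ v(0)=0,\ H(Dv,\cdot,\omega)\le \mu \text{ in } \R \bigr\}.
\]
I then need to verify three items: (a) the supremum is taken over a non-empty family with a common Lipschitz bound, so $m_\mu$ is well-defined and Lipschitz; (b) $m_\mu$ is a viscosity subsolution on all of $\R$; (c) $m_\mu$ is a viscosity supersolution on $\R\setminus\{0\}$; the boundary value $m_\mu(0,0,\omega)=0$ is then automatic from the definition.

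For (a), the preceding lemma rewrites $\tilde A(\omega)=\sup_{y} H(0,y,\omega)$, so whenever $\mu \ge \tilde A(\omega)$ the constant $v\equiv 0$ is admissible, and the family is non-empty. Uniform coercivity \textbf{(H3)} yields $L=L(\mu)>0$ such that $H(p,y,\omega)>\mu$ whenever $|p|>L$, and combined with the Lipschitz-in-$p$ regularity \textbf{(H1)} this produces a standard $L$-Lipschitz bound for every admissible $v$. Since each such $v$ vanishes at $0$, we get $|v(y)|\le L|y|$, so $m_\mu$ is well-defined, $L$-Lipschitz, and vanishes at the origin. Item (b) follows from the classical viscosity theory: the upper semicontinuous envelope of a family of subsolutions of $H(Dv,y,\omega)\le\mu$ is again a subsolution when $H$ is continuous in $(p,y)$, and this continuity is provided by \textbf{(H1)} together with the modulus of continuity \textbf{(H4)}; since $m_\mu$ is already continuous, it is itself a subsolution on all of $\R$.

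The main step is (c), handled by the classical Perron bump argument. Assume by contradiction that the supersolution property fails at some $y_0\ne 0$: there exists $\phi\in C^1$ such that $m_\mu-\phi$ has a strict local minimum at $y_0$ with $H(D\phi(y_0),y_0,\omega)<\mu$. Since $y_0\ne 0$, I choose $r>0$ small enough that $0\notin \overline{B_r(y_0)}$, and then by continuity of $H$ a constant $\eta>0$ so that $\phi+\eta$ remains a classical subsolution on $B_r(y_0)$ while $\phi+\eta<m_\mu$ near $\partial B_r(y_0)$ (the latter possible because $y_0$ is a strict minimum and $\eta$ is small). The glued function
\[
\tilde v := \max(m_\mu,\phi+\eta) \text{ on } B_r(y_0), \qquad \tilde v := m_\mu \text{ elsewhere},
\]
is then a Lipschitz subsolution (as a local max of two subsolutions), vanishes at $0$ since $0\notin B_r(y_0)$, and strictly exceeds $m_\mu$ at $y_0$, contradicting maximality.

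The principal obstacle is precisely (c): we must ensure that the Perron perturbation preserves the pinning condition $v(0)=0$. This is why the supersolution property is asserted only on $\R\setminus\{0\}$: for $y_0\ne 0$ we may always shrink $r$ so the bump avoids the origin, but at $y_0=0$ the construction would break the pinning. This is mild and entirely consistent with the statement of \eqref{P.M.J}, where no supersolution condition is imposed at the junction point, only the pointwise value $m_\mu(0,0,\omega)=0$. Note that convexity \textbf{(H5)} plays no role in this existence argument; it will become relevant only when deriving representation formulas or stability properties of $m_\mu$ later.
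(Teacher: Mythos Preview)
Your argument is correct and follows the same Perron strategy as the paper: define $m_\mu$ as the supremum over admissible Lipschitz subsolutions, get the subsolution property from stability under suprema, and the supersolution property on $\R\setminus\{0\}$ via the standard bump construction (which the paper packages as a cited lemma). Your observation that the bump must avoid the origin to preserve the pinning $v(0)=0$ is exactly the right point.

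There is one organizational difference worth noting. You obtain finiteness of the supremum directly from coercivity \textbf{(H3)}, which gives a uniform Lipschitz bound $L=L(\mu)$ valid for every $\mu\ge\tilde A(\omega)$. The paper instead bounds $m_\mu$ from above by the explicit supersolution $C_\mu|y|$ via the comparison principle (Proposition~\ref{principe de comparaison du probleme metrique sur la jonction}); that comparison principle uses convexity \textbf{(H5)} and requires the \emph{strict} inequality $\mu>\tilde A(\omega)$, so the paper must treat the endpoint $\mu=\tilde A(\omega)$ separately by taking a monotone limit and invoking stability of viscosity solutions. Your route is therefore slightly more economical here and justifies your remark that \textbf{(H5)} is not needed for existence itself.
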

		In order to prove Theorem \ref{theo.existence.de.la.solution.du.pmj}, we need  a comparison principle for equation \eqref{P.M.J}. 
	\begin{prop}[Comparison principle for \eqref{P.M.J}]\label{principe de comparaison du probleme metrique sur la jonction}Assume ${\bf (H1)}$-${\bf (H6)}$ and fix  $\oo \in \o$ and $\mu >\tilde{A}(\oo)$. Assume that $u \in USC(\R \setminus{\{0\}})$ and $v \in LSC(\R\setminus\{0\})$ are respectively a sub- and a super-solution of \eqref{P.M.J}. Assume also that $\liminf\limits_{|y|\to +\infty}|y|^{-1}v(y,\oo)> 0$ then 
$$ u(\cdot,\oo) \le v(\cdot,\oo)~\mbox{in}~\R\setminus\{0\}. $$
		\end{prop}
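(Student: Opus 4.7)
The plan is to argue by contradiction via the classical doubling of variables technique, exploiting the convexity assumption \textbf{(H5)} together with $\mu > \tilde A(\oo)$ to reduce to a \emph{strict} subsolution. Assume by contradiction that $\sup_{\R\setminus\{0\}}(u-v) > 0$. Using \textbf{(H5)} and the identity $\tilde A(\oo)=\sup_y H(0,y,\oo)$ coming from \textbf{(H6)}, the interpolation $u_\lambda := (1-\lambda)u$, $\lambda\in(0,1)$, satisfies in the viscosity sense on $\R\setminus\{0\}$
\begin{equation*}
H(Du_\lambda, y, \oo) \le (1-\lambda)\, H(Du, y, \oo) + \lambda\, H(0, y, \oo) \le \mu - \lambda(\mu - \tilde A(\oo)) =: \mu - \eta_\lambda,
\end{equation*}
with $\eta_\lambda>0$, while $u_\lambda(0)\le 0$ and $\sup(u_\lambda - v)>0$ for $\lambda$ small.

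The uniform coercivity \textbf{(H3)} provides a global Lipschitz bound on $u$, and hence on $u_\lambda$; together with the lower linear growth $\liminf_{|y|\to\infty}|y|^{-1}v(y,\oo) > 0$ (and, if necessary, a small localizing perturbation vanishing in a suitable limit), this ensures that $\Phi_\e(x,y) := u_\lambda(x) - v(y) - |x-y|^2/\e$ attains its supremum at some point $(x_\e, y_\e)$. Standard viscosity arguments yield $|x_\e - y_\e|^2/\e \to 0$ and convergence of $x_\e, y_\e$ along a subsequence to a common limit $\bar x$. If $\bar x = 0$, then the upper semicontinuity of $u$, the lower semicontinuity of $v$, and the boundary conditions $u(0)\le 0\le v(0)$ (understood through the semicontinuous envelopes at the junction) would yield $\limsup_\e \Phi_\e(x_\e, y_\e) \le u_\lambda(0) - v(0) \le 0$, contradicting $\Phi_\e(x_\e, y_\e) \ge \sup(u_\lambda - v) > 0$. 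Hence $\bar x \neq 0$ and both $x_\e, y_\e$ avoid the junction for $\e$ small.

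For such $\e$ the viscosity sub- and supersolution inequalities apply with slope $p_\e := 2(x_\e - y_\e)/\e$, giving
\begin{equation*}
H(p_\e, x_\e, \oo) \le \mu - \eta_\lambda, \qquad H(p_\e, y_\e, \oo) \ge \mu.
\end{equation*}
Coercivity \textbf{(H3)} bounds $|p_\e| \le R(\mu)$, and the space modulus \textbf{(H4)} then yields $|H(p_\e, x_\e, \oo) - H(p_\e, y_\e, \oo)| \le w(|x_\e - y_\e|(1 + R(\mu))) \to 0$, contradicting the strict gap $\eta_\lambda > 0$. The main delicate point is the case $\bar x = 0$, since the equation is not prescribed at the junction and one cannot directly use a viscosity inequality there: it is resolved by combining the strict positivity of $\sup(u_\lambda - v)$ with the boundary condition $m_\mu(0)=0$ read in the semicontinuous sense. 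Note also that \textbf{(H5)} is crucial only through the interpolation trick producing $u_\lambda$; the rest of the argument is the standard coercive comparison.
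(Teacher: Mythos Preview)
Your doubling-of-variables argument has a genuine gap at the compactness step. Coercivity of $H$ gives $u$ a global Lipschitz bound $L_\mu$, but the hypothesis on $v$ only provides $\liminf_{|y|\to\infty}|y|^{-1}v(y)=:c>0$, with no relation between $c$ and $L_\mu$. If $(1-\lambda)L_\mu>c$ --- which is the generic situation for the small $\lambda$ you need in order to preserve $\sup(u_\lambda-v)>0$ --- then $x\mapsto u_\lambda(x)-v(x)$ is unbounded above, $\sup\Phi_\e=+\infty$, and no maximum point exists (nor does $(x_\e,y_\e)$ stay bounded). Your parenthetical ``small localizing perturbation vanishing in a suitable limit'' cannot close this: the strictness margin of $u_\lambda$ is only $\eta_\lambda=\lambda(\mu-\tilde A(\oo))$, so any linear-growth perturbation with gradient small enough to keep $u_\lambda$ a strict subsolution is too small to absorb the growth deficit $(1-\lambda)L_\mu-c$, while a quadratic penalty $-\alpha|x|^2$ would send the maximum points to infinity as $\alpha\to0$.

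The paper's proof is organized precisely around this difficulty. Rather than fixing $\lambda$ small, it introduces
\[
E=\Bigl\{\lambda\in[0,1]:\ \liminf_{|y|\to\infty}|y|^{-1}\bigl(v-\lambda u\bigr)\ge 0\Bigr\}
\]
and runs a continuity/bootstrap argument: for $\lambda\in E$ one has control at infinity \emph{by definition}, so after adding a small coercive term $\eta\bigl((R^2+|y|^2)^{1/2}-R\bigr)$ to $v$ one can invoke the standard bounded-domain comparison, let $R\to\infty$, and deduce $(\lambda+\delta)u\le v$ for a uniform $\delta>0$ depending only on $\lambda$. This makes $E$ relatively open in $[0,1]$; since $E$ is also closed and contains $0$, one obtains $E=[0,1]$ and then the conclusion by letting $\lambda\to1$. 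Your convexity interpolation $u_\lambda=(1-\lambda)u$ is exactly the device the paper uses to manufacture a strict subsolution; what is missing in your argument is this bootstrap on $\lambda$ that decouples the growth control at infinity from the smallness of $\lambda$.
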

		\begin{rem}
			Note that if $u$ is a solution of $H(Du,y,\oo) \le \mu$ then by coercivity assumption {\bf (H3)}, $u$ is Lipschitz continuous.
		\end{rem}

\begin{proof}
We want to prove a comparison principle on an unbounded domain for unbounded functions. First, since $u$ is a sub-solution of \eqref{P.M.J} then it's Lipschitz continuous and we have 
		$$L= \limsup\limits_{|y|\to \infty }|y|^{-1}u(y)<\infty.$$
 It's enough to prove this result for $L\ge 0$, since otherwise the result is immediate from the usual comparison principle on bounded domains (see \cite{Ishi}). 
 We define 
 $$E=\{ 0\le \lambda\le 1: \liminf\limits_{|y|\to +\infty}\frac{v(y,\oo)-\lambda u(y,\oo)}{|y|}\ge 0  \},$$ 
  and $\gamma =\sup E$. By assumptions, we have that $\gamma >0$ and $[0,\gamma)\subseteq E$. We also have that, for any $0<\eta<\gamma$,  
  $$\liminf\limits_{|y|\to +\infty}\frac{v(y,\oo)-\gamma u(y,\oo)}{|y|}=\liminf\limits_{|y|\to +\infty}\frac{v(y,\oo)-(\gamma-\eta) u(y,\oo)}{|y|}-\eta \limsup\limits_{|y|\to +\infty}\frac{ u(y,\oo)}{|y|}\ge -\eta L.$$
  Hence letting $\eta \to 0$, we get $\gamma \in E$ and $E=[0,\gamma]$. 
  
  We now show that $\gamma=1$. By contradiction, assume that $\gamma <1$. Fix $0<\lambda\le \gamma<1$, $R,\eta>0$ (to be selected below) and $0\le \d <\min(\frac{1}{2}(1-\lambda),\eta/2L) $. We define $\tilde{u}=(\lambda+\delta)u$. Using the convexity of $H$ in $p$, we have
    $$H(D\tilde u,y,\omega)\le (\lambda+\delta)H(D u,y,\omega)+(1- (\lambda+\delta))H(0,y,\omega)\le  (\lambda+\delta)\mu + (1- (\lambda+\delta))\tilde A(\omega)\le \mu-\alpha$$
    for some $\alpha>0$ depending only on $\lambda$. We also define $\tilde{v}=v+\eta ((R^{2}+|y|^{2})^{1/2}-R)=v+\eta \phi_{R}(y).$
 Since $H$ is Lipschitz continuous in $p$, and for a suitable $\eta$, we have that 
 $$ H(D \tilde{v},y,\oo)\ge \mu -\a.$$
Since $\lambda \in E$ and by the choice of $\d $, we deduce that
$$ \liminf\limits_{|y|\to +\infty}\frac{\tilde v(y,\oo)-\tilde{u}(y,\oo)}{|y|}\ge\eta/2>0. $$
Hence, we can apply the comparison principle for bounded domains to $\tilde{u}$ and $\tilde{v}$ in order to get
$$\tilde{u}-\tilde{v}\le 0\mbox{ in }\R \setminus\{0\}.$$
Letting $R \to +\infty$, we deduce that, since $\phi_{R}\to 0$, $$\tilde{u}-v\le 0\mbox{ in }\R \setminus\{0\},$$ 
i.e.
\begin{equation}\label{eq:1} (\lambda+\delta)u-v \le 0. 
\end{equation}
Dividing by $|y|$ and taking the $\limsup$, we get
 $$\limsup\limits_{|y|\to \infty}|y|^{-1} ((\lambda+\d)u-v)\le 0$$ 
 and so $\lambda+\d\in E$. Taking $\lambda=\gamma$, and $\d >0$, we get that $\lambda+\d>\gamma $, which is a contradiction with the definition of $\gamma$. So we obtain that $E=[0,1]$. 
 
 Using the same arguments and taking $\d =0$ and $\lambda \to 1$, we deduce the result using \eqref{eq:1}.
 \end{proof}
				\medskip
		
We now give the proof of Theorem  \ref{theo.existence.de.la.solution.du.pmj}
\begin{proof}[{Proof of Theorem \ref{theo.existence.de.la.solution.du.pmj}}]
Fix $\oo \in \o $ and let $\mu> \tilde{A}(\oo)$. 
We set $$ M=\{v \in \mathbf{Lip}~\mbox{ such that }H(Dv,y,\oo)\le \mu \},$$and for each $y \in \R$, $$ m_{\mu}(y,0,\oo)=\sup\limits_{v \in M}\{v(y,\oo)-v(0,\oo)\}. $$ Since $ \mu > \tilde{A}(\oo)$, we have $M \neq \emptyset$, so the previous supremum is well defined. We now want to show that $ m_{\mu}$ is finite. To do that, we introduce a regular function $\psi$ defined on $\R$, for $C_{\mu}$ large enough, by
		$$ \psi(y) =C_{\mu}|y| .$$
 The coercivity of $H$ yields that $\psi $ is a global super-solution of \eqref{P.M.J}. Then the comparison principle Proposition \ref{principe de comparaison du probleme metrique sur la jonction}, applied to $v(\cdot,\oo)-v(0,\oo)$ and $\psi$ for each $v \in M$, yields that $$v(y,\oo)-v(0,\oo) \le \psi(y),  $$ which implies that $$m_{\mu}(y,0,\oo)\le \psi (y),$$  and so $m_{\mu}$ is finite.  Using that, for all $v\in M$, 
 $$H(0,y,\omega)\le H(Dv,y,\omega),$$
 we deduce that $0\in M$ and so $m_\mu(y,0,\omega)\ge 0.$		
		Moreover, by construction we have that $ m_{\mu}$ is a solution of $$H(Dv,y,\oo)\le \mu \mbox{ for } y \in \R ,  $$ and satisfies $m_{\mu}(0,0,\oo)=0$. In particular $m_{\mu}$ is a sub-solution of \eqref{P.M.J}. The fact that $m_{\mu}$ is a viscosity super-solution follows directly from the definition of $m_{\mu}$ and the following lemma 
		\begin{lem}[{\cite[Lemma 2.1]{dragoni}}]\label{contradiction d une supersolution. oar.la.methode de peroon}
			If $v \in M$ is not a super-solution of \eqref{P.M.J} then there exists $u \in M$ such that $$v(y,\oo) < u(y,\oo) ~\mbox{at some}~y \in \R.   $$
		\end{lem}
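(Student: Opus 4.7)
The plan is to apply the classical Perron bump construction, producing a local upward perturbation of $v$ that stays inside $M$ while strictly lifting $v$ at one point. Since $v$ fails to be a viscosity super-solution of \eqref{P.M.J} on $\R \setminus \{0\}$, there is a point $y_0 \in \R \setminus \{0\}$ and a smooth test function $\phi$ touching $v$ from below at $y_0$ with the strict inequality $H(D\phi(y_0), y_0, \oo) < \mu$.

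First, using ${\bf (H1)}$ and ${\bf (H4)}$, I would pick a radius $0 < r < |y_0|$ small enough that $\bar B_r(y_0) \subset \R \setminus \{0\}$ and the strict inequality persists uniformly: $H(D\phi(y) + q, y, \oo) \le \mu - \eta$ for some $\eta > 0$, all $y \in \bar B_r(y_0)$, and all sufficiently small $|q|$. Next, I would introduce the bump $\phi_\eps(y) := \phi(y) + \eps - \lambda |y - y_0|^2$ and choose $\eps, \lambda > 0$ small so that (a) $\phi_\eps(y_0) = v(y_0) + \eps$, (b) $\phi_\eps < v$ on $\partial B_r(y_0)$, arranged by forcing $\lambda r^2 > \eps$ and using that $\phi \le v$ locally around $y_0$, and (c) the Lipschitz bound from ${\bf (H1)}$ absorbs the gradient perturbation $-2\lambda (y - y_0)$ into the slack $\eta$, giving $H(D\phi_\eps, y, \oo) \le \mu$ on $B_r(y_0)$.

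Finally, I would define $u := \max(v, \phi_\eps)$ on $B_r(y_0)$ and $u := v$ outside. Property (b) makes the gluing continuous and the Lipschitz constants of $v$ and $\phi_\eps$ transfer to $u$; the standard viscosity fact that the maximum of two sub-solutions is a sub-solution then gives $H(Du, y, \oo) \le \mu$ on all of $\R$, so $u \in M$; and (a) yields $u(y_0) > v(y_0)$, producing the desired strict domination.

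The main delicate point will be the coupled choice of parameters in step (c): the slack $\eta$ must be fixed first by choosing $r$, then $\lambda$ small enough that the extra gradient $-2\lambda(y-y_0)$ stays within that slack, and only then $\eps$ small enough that $\phi_\eps$ sits below $v$ on $\partial B_r(y_0)$. Keeping the whole construction inside $\R \setminus \{0\}$ is immediate from $r < |y_0|$, so the junction point never interferes with the bump.
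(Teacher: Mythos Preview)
Your argument is correct and is precisely the standard Perron bump construction that underlies the cited reference; the paper itself does not give a proof but merely invokes \cite[Lemma 2.1]{dragoni}. The only minor comment is that in step (b) you should first shrink $r$ so that $\phi\le v$ holds on all of $\bar B_r(y_0)$ (this is part of the ``touching from below'' condition, but worth making explicit), after which your parameter cascade $r\to\lambda\to\eps$ works exactly as you describe.
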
    
\bigskip
		
It remains to show that there exists a solution for $\mu =\tilde{A}(\oo)$. We have, by definition of $m_{\mu}$, that the map 
$$ \mu \mapsto m_{\mu}(y,0,\oo)  ~\mbox{is non-decreasing for } \mu> \tilde{A}(\oo).$$ 
We then define, for all $y \in \R$, 
$$ m_{\tilde{A}(\oo)}(y,0,\oo)=\inf\limits_{\mu (\oo) > \tilde{A}(\oo)}m_{\mu}(y,0,\oo)=\lim\limits_{\mu (\oo) \to  \tilde{A}(\oo)}m_{\mu}(y,0,\oo). $$
By stability, we deduce that  $ m_{\tilde{A}(\oo)}(\cdot,0,\oo) $ is a solution of \eqref{P.M.J}.
\end{proof}

\subsection{Properties of the solution of the metric problem}
For all $\oo\in \o$, $\mu \ge \tilde{A}(\oo)$ and $x \in \R$,  we consider 
		\begin{equation}\label{P.M.J.for.all.x}\left\{
		\begin{aligned}
		&H\left(Dm_{\mu}(y,x,\oo),y,\oo\right)=\mu , & y \in \R \setminus \{x\}, \\
		&m_{\mu}(x,x,\oo)=0.
		\end{aligned}
		\right.
		\end{equation}
		
As in Section \ref{sec:41}, we can prove that there exists a solution $m_{\mu}(\cdot,x,\oo)$ of \eqref{P.M.J.for.all.x} given by the following representation formulae
		\begin{equation}\label{repres.de.la.solu.de.pmj.pour.tout.x}
		m_{\mu}(y,x,\oo)=\sup\{v(y,\oo)-v(x,\oo);v\in \mathbf{Lip}\mbox{ such that }H(Dv,y,\oo)\le \mu \mbox{ in }\R\}.
		\end{equation}
		
\begin{rem}
In the case of stationary ergodic hamiltonian, we have $m_\mu(y,x,\tau_{-x}\oo)=m_\mu(y-x,0,\oo)$ (see \cite{homog.stoch.quantitative.premier.ordre.concexe} for example). In our case, this property is no longer true.
\end{rem}

We can extend this definition from $\{x\}$ to any arbitrary compact $K\subseteq\R$ and we define 
		$$ m_{\mu}(y,K,\oo)=\inf\limits_{x \in K}\sup\{v(y,\oo)-v(x,\oo);v\in \mathbf{Lip}\mbox{ such that }H(Dv,y,\oo)\le \mu \mbox{ in }\R\}. $$
		The function $m_{\mu}(\cdot,K,\oo)$ is then a solution of the following problem
		\begin{equation}\label{P.M.J.for.k}\left\{
		\begin{aligned}
		&H\left(Dm_{\mu}(\cdot,K,\oo),y,\oo\right)=\mu  & y \in \R \setminus K, \\
		&m_{\mu}(\cdot,K,\oo)=0& \mbox{ in }K.
		\end{aligned}
		\right.
		\end{equation}

\begin{proposition}[{\cite[Proof of Proposition 3.6]{homog.stoch.quantitative.premier.ordre.concexe}}]\label{representation de m mu avec k}
For every $\oo\in \o$ and $\mu>\tilde A(\oo)$, the function $m_{\mu}(\cdot,K,\oo) $ is the unique nonnegative solution of \eqref{P.M.J.for.k} and is also 
  given by the following representation formulae, for all $y \in \R \setminus K$
		\begin{equation}\label{nouvelle representation de la solution du pmj avec k}m_{\mu}(y,K,\oo)=\inf\limits_{x \in K}\sup\{v(y,\oo)-v(x,\oo);v\in \mathbf{Lip}\mbox{ such that }H(Dv,y,\oo)\le \mu \mbox{ in }\R\setminus K\}.\end{equation}			

		\end{proposition}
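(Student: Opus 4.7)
I would prove the proposition in two parts: uniqueness of the nonnegative solution of \eqref{P.M.J.for.k}, and validity of the representation formula \eqref{nouvelle representation de la solution du pmj avec k}. Both rest on Proposition \ref{principe de comparaison du probleme metrique sur la jonction} and the convexity-plus-penalization technique from its proof.

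For uniqueness, let $w$ be any nonnegative solution of \eqref{P.M.J.for.k}. I first upgrade $w$ to a viscosity sub-solution on all of $\R$: at any $y_{0}\in K$, a test function $\phi$ with $w-\phi$ having a local maximum at $y_{0}$ satisfies $\phi\ge w\ge 0=w(y_{0})=\phi(y_{0})$, so $\phi$ has a local minimum at $y_{0}$, forcing $D\phi(y_{0})=0$; then ${\bf (H6)}$ together with $\mu>\tilde{A}(\omega)=\sup_{y}H(0,y,\omega)$ yields $H(D\phi(y_{0}),y_{0},\omega)=H(0,y_{0},\omega)\le\tilde{A}(\omega)<\mu$. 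The maximality representation \eqref{repres.de.la.solu.de.pmj.pour.tout.x} then gives $w(y)-w(x)\le m_{\mu}(y,x,\omega)$ for every $x\in K$, and since $w(x)=0$, taking the infimum over $x\in K$ yields $w\le m_{\mu}(\cdot,K,\omega)$. For the reverse inequality I apply comparison on each connected component of $\R\setminus K$. On bounded components both functions vanish on the boundary, so standard viscosity comparison gives equality. On each unbounded half-line the obstacle is that $w$ is only known to be nonnegative; I would reuse the convexity trick of Proposition \ref{principe de comparaison du probleme metrique sur la jonction}. For small $\lambda\in(0,1)$, $\lambda m_{\mu}(\cdot,K,\omega)$ is a strict sub-solution with margin $(1-\lambda)(\mu-\tilde{A}(\omega))$ by convexity and ${\bf (H6)}$; adding $\eta\phi_{R}$, with $\phi_{R}(y)=(R^{2}+|y|^{2})^{1/2}-R$, to $w$ produces a strict super-solution for suitable $\eta>0$. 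Balancing $\lambda$ and $\eta$ so that $\eta>\lambda\cdot\mathrm{Lip}(m_{\mu}(\cdot,K,\omega))$ ensures $\liminf_{|y|\to\infty}|y|^{-1}((w+\eta\phi_{R})-\lambda m_{\mu}(\cdot,K,\omega))>0$; bounded-domain comparison and $R\to\infty$ give $\lambda m_{\mu}(\cdot,K,\omega)\le w$. Since $\mu>\tilde{A}(\omega)\ge\mu^{\star}_{L},\mu^{\star}_{R}$, the function $m_{\mu}(\cdot,K,\omega)$ has positive linear growth at $\pm\infty$, so $w$ inherits positive linear growth and a direct application of Proposition \ref{principe de comparaison du probleme metrique sur la jonction} concludes $m_{\mu}(\cdot,K,\omega)\le w$.

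For the representation formula, denote by $\tilde{m}(y)$ the right-hand side of \eqref{nouvelle representation de la solution du pmj avec k}. Finiteness for $y\in\R\setminus K$ follows by taking the infimum at the boundary point $x\in\partial K$ lying in the same connected component of $\R\setminus K$ as $y$: coercivity on that component bounds $|v(y)-v(x)|$ uniformly over admissible $v$. The inequality $\tilde{m}\ge m_{\mu}(\cdot,K,\omega)$ is immediate from the inclusion of admissible classes. For the reverse, I verify that $\tilde{m}$ is itself a nonnegative solution of \eqref{P.M.J.for.k}: nonnegativity from admissibility of $v\equiv 0$ under ${\bf (H6)}$; the boundary condition $\tilde{m}=0$ on $K$ by taking $x=y$ in the infimum; the sub-solution property on $\R\setminus K$ by stability of sub-solutions under suprema and infima; and the super-solution property on $\R\setminus K$ via a Perron-type argument based on Lemma \ref{contradiction d une supersolution. oar.la.methode de peroon}, the key point being that the local bump used to enlarge a non-super-solution can be localized inside $\R\setminus K$ around any interior point. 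Uniqueness then forces $\tilde{m}=m_{\mu}(\cdot,K,\omega)$.

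The main obstacle is the unbounded-component comparison in the uniqueness step: without a priori linear growth on $w$, one must execute the convexity-plus-penalization bootstrap with careful balance of $\lambda$, $\eta$, $R$ to first extract positive linear growth on $w$, before invoking the standard comparison machinery of Proposition \ref{principe de comparaison du probleme metrique sur la jonction}. Once this bootstrap is done, the remaining arguments reduce to routine viscosity manipulations already present in the paper.
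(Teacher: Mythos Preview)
The paper gives no proof of its own here---it simply cites \cite[Proof of Proposition~3.6]{homog.stoch.quantitative.premier.ordre.concexe}---so there is no detailed comparison to make. Your argument is self-contained and, with one caveat, correct.

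The uniqueness half is fine: the bootstrap (comparing $\lambda\,m_\mu(\cdot,K,\omega)$ against $w+\eta\phi_R$ for small $\lambda$ to first extract positive linear growth of a merely nonnegative supersolution $w$, then invoking Proposition~\ref{principe de comparaison du probleme metrique sur la jonction}) is exactly the right mechanism, and your extension of $w$ to a global sub-solution across $K$ via {\bf(H6)} is clean.

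The step that needs justification is your claim that $\tilde m=\inf_{x\in K}M(\cdot,x)$ is a viscosity sub-solution on $\R\setminus K$ ``by stability of sub-solutions under suprema and infima''. Stability under suprema is standard; stability of sub-solutions under \emph{infima} is not, and fails in general (already in $\R^d$ for $d\ge 2$, or in 1D for non-convex $H$: the superdifferential of $\inf_\alpha u_\alpha$ at a point can be strictly larger than that of any achieving $u_{\alpha_0}$). What makes it true here is the combination of dimension one and convexity {\bf(H5)}: for convex $H$, a Lipschitz $u$ is a viscosity sub-solution of $H(Du,y,\omega)\le\mu$ iff $u'(y)\in\{p:H(p,y,\omega)\le\mu\}=[p^-(y),p^+(y)]$ a.e., equivalently iff
\[
\int_{y_1}^{y_2}p^-(s)\,ds\ \le\ u(y_2)-u(y_1)\ \le\ \int_{y_1}^{y_2}p^+(s)\,ds\qquad\text{for all }y_1<y_2,
\]
and this two-sided integral bound \emph{does} pass to pointwise infima of an equi-Lipschitz family. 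You should state this explicitly; as written, the sentence reads like an appeal to a nonexistent general stability principle, and a reader would rightly object. Once this is supplied, your Perron argument for the super-solution property goes through (the bump is localised in $\R\setminus K$, and for $x\in K$ outside the bump one gets $w(y)\le M(y,x)+w(x)=M(y,x)$, contradicting $w(y)>\tilde m(y)$), and uniqueness closes the argument.
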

A direct consequence of the previous proposition is the following corollary.
		\begin{corollary}
			The function $m_{\mu}(\cdot,K,\oo)$ is $\textbf{F}(\R\setminus K)$-measurable.
		\end{corollary}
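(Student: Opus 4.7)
The plan is to exploit the alternative representation formula \eqref{nouvelle representation de la solution du pmj avec k} from Proposition \ref{representation de m mu avec k}: for $y \in \R\setminus K$,
$$m_{\mu}(y,K,\oo)=\inf_{x \in K}\sup\{v(y)-v(x) : v\in \mathbf{Lip},\ H(Dv,z,\oo)\le \mu \mbox{ in }\R\setminus K\},$$
and to show each operation preserves $\mathbf{F}(\R\setminus K)$-measurability (the case $y\in K$ is trivial since $m_\mu(y,K,\oo)=0$). The crucial feature of \eqref{nouvelle representation de la solution du pmj avec k} is that the constraint on $v$ only sees $H(\cdot,z,\oo)$ for $z\in \R\setminus K$, which is exactly the data generating $\mathbf{F}(\R\setminus K)$.

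First, I would use the coercivity ${\bf (H3)}$ to bound the Lipschitz constant of any admissible $v$ by some $L=L(\mu)$, and normalize by $v(0)=0$ (which does not affect $v(y)-v(x)$). This places us in the space $\mathcal{L}_L$ of $L$-Lipschitz functions vanishing at $0$, which is separable for local uniform convergence. Fix a countable dense subset $\{v_n\}_{n\ge 1}\subset \mathcal{L}_L$ and set
$$A_n := \{\oo \in \o : H(p,z,\oo) \le \mu \text{ for all } z \in \R\setminus K \text{ and all } p \in D^+ v_n(z)\}.$$
Thanks to ${\bf (H1)}$ and ${\bf (H4)}$, this event reduces to a countable intersection of events of the form $\{H(p,z,\cdot)\le\mu\}$ with $(z,p)$ running over a countable dense subset of $(\R\setminus K)\times \R$, so $A_n \in \mathbf{F}(\R\setminus K)$ by definition of this $\sigma$-algebra. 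A standard viscosity stability argument, combined with convex contraction $v\rightsquigarrow (1-\eta)v$ (which uses ${\bf (H5)}$ and ${\bf (H6)}$ to turn a non-strict admissible $v$ into a strictly admissible one, so that nearby $v_n$'s remain admissible), then yields
$$S_x(\oo) := \sup\{v(y)-v(x):v\text{ admissible}\} = \sup_{n\ge 1}(v_n(y)-v_n(x))\,\mathbf{1}_{A_n}(\oo),$$
so $S_x$ is $\mathbf{F}(\R\setminus K)$-measurable. Finally, since every admissible $v$ is $L$-Lipschitz, the map $x\mapsto S_x(\oo)$ is $L$-Lipschitz for each $\oo$, so $\inf_{x\in K}S_x(\oo)=\inf_{x\in K_0}S_x(\oo)$ for any countable dense $K_0\subseteq K$, and a countable infimum of $\mathbf{F}(\R\setminus K)$-measurable functions is $\mathbf{F}(\R\setminus K)$-measurable.

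The main obstacle is the separability reduction in the middle step: replacing the uncountable sup over admissible $v$ by a countable one without losing equality. This hinges on two ingredients that are already available, namely the uniform Lipschitz bound coming from coercivity (to work inside a separable function space) and the viscosity stability of the sub-solution property under local uniform convergence combined with the convexity-based contraction trick (to ensure the approximating $v_n$ are themselves admissible). Once the sup is countable, the remaining steps are routine.
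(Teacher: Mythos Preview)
Your proposal is correct and follows exactly the approach the paper intends: the paper states the corollary as ``a direct consequence of the previous proposition'' (Proposition \ref{representation de m mu avec k}), relying on the fact that the representation formula \eqref{nouvelle representation de la solution du pmj avec k} only involves $H(\cdot,z,\oo)$ for $z\in\R\setminus K$, and you have supplied the standard measure-theoretic details (countable reduction via the uniform Lipschitz bound, stability, and the convexity contraction) that the paper leaves implicit.
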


		The functions $m_{\mu}(\cdot,x,\oo)$ and $m_{\mu}(\cdot,K,\oo)$ satisfy several properties that are recalled in the following proposition.
		\begin{prop}[{{Some properties of $m_\mu$}}]\label{proprietes verifie par la soluton du probleme metrique}
			For all $\oo\in  \o$ and $\mu > \tilde {A}(\oo)$, the functions $m_{\mu}(\cdot,x,\oo)$ and $m_{\mu}(\cdot,K,\oo)$ verify the following properties
			\begin{itemize}
				\item If $U\subseteq \R$ is open, $x\in \R \setminus U $ and $u$ verifies $$ H(Du,y,\oo)\le\mu \mbox{ in } U,$$ then \begin{equation}\label{sous solution sur un un  ouvert u avec la solution du p.m.j propritete de maximalite}
				u-m_{\mu}(\cdot,x,\oo) \le \max\limits_{y\in  \partial U }\left(u(y)-m_{\mu}(y,x,\oo)\right) \mbox{ in } U.	\end{equation}  
				\item For all $y, z \in \R$ we have $$ m_{\mu}(y,x,\oo) \le m_{\mu}(y,z,\oo)+m_{\mu}(z,x,\oo) ;$$
				\item $m_{\mu}(\cdot,x,\oo)$ is equivalent to the euclidean metric, i.e, for $\oo \in \o$; there exists $l_{\mu}, L_{\mu}\ge 0$ satisfying, for some $C',c' >0$, $c'(\mu - \tilde A(\oo))\le l_{\mu}\le L_{\mu}\le C'$ such that  \begin{equation}\label{ineg.sol.metr.prob}0\le l_{\mu} |y-x| \le m_{\mu}(y,x,\oo)\le L_{\mu}|y-x| ;\end{equation} 
				\item For all $z \in \R \setminus K$ and $y \in \R \setminus K'$ where $K,K'$ two compacts in $\R$, we have
				\begin{equation}\label{inegalite.lipsh.compacts}
				|m_{\mu}(z, K,\oo)-m_{\mu}(y, K',\oo)| \le L_{\mu}(|z-y|+dist_{\mathcal H}(K,K')), 
				\end{equation} with $dist_{\mathcal H}$ is the Hausdorff distance given by $$ dist_{\mathcal{H}}(E,F)=\max\left\{\sup\limits_{y \in F}\inf\limits_{x \in E}|y-x|,\sup\limits_{y \in E}\inf\limits_{x \in F}|y-x|\right\} .$$
				\item For every open set $U\subseteq \R$, $x\in U$ and $y \in \R\setminus U$ \begin{equation}\label{donner la solution du probleme metrique sous formr de minimum et de somme sur le bord et la valeur de m en cette valeur}
				m_{\mu}(y,x,\oo)=\min\limits_{ z \in \partial U}\left(m_{\mu}(y,z,\oo)+m_{\mu}(z,x,\oo)\right).
				\end{equation}
			\end{itemize}
		\end{prop}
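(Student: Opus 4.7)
All five properties flow from the representation formula \eqref{repres.de.la.solu.de.pmj.pour.tout.x}, the comparison principle Proposition~\ref{principe de comparaison du probleme metrique sur la jonction}, and the convexity/coercivity built into ${\bf (H)}$. My plan is to prove them in the order (2), (3), (1), (4), (5), since each step feeds into the next. The sub-additivity~(2) is immediate from the representation: for any admissible $v$, $v(y)-v(x)=(v(y)-v(z))+(v(z)-v(x))\le m_\mu(y,z,\oo)+m_\mu(z,x,\oo)$, and taking $\sup_v$ on the left gives the inequality.

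For the bilipschitz bounds~(3), the upper bound reuses the global super-solution $\psi(y)=C_\mu|y-x|$ from the proof of Theorem~\ref{theo.existence.de.la.solution.du.pmj}: the comparison principle applied to each admissible $v$ yields $v(y)-v(x)\le C_\mu|y-x|$, hence $m_\mu\le L_\mu|y-x|$. For the lower bound I would use the convexity~(H5), coercivity~(H3), and growth~(H2) to identify slopes $p_\pm(\mu)$ with $|p_\pm|\ge c'(\mu-\tilde A(\oo))$ such that $y\mapsto p_\pm(y-x)$ is a sub-solution of $H(p,\cdot,\oo)\le\mu$, then substitute into the sup-formula. The maximality property~(1) is then handled by setting $M_U=\max_{y\in\partial U}(u(y)-m_\mu(y,x,\oo))$ and $\tilde u=u-M_U$: $\tilde u$ is a sub-solution on $U$ with $\tilde u\le m_\mu(\cdot,x,\oo)$ on $\partial U$, and the standard bounded comparison (for bounded $U$) or Proposition~\ref{principe de comparaison du probleme metrique sur la jonction} (for unbounded $U$) concludes; the latter uses the linear growth at infinity of $m_\mu(\cdot,x,\oo)$ just established in~(3).

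Property~(4) follows from~(2) combined with the infimum definition of $m_\mu(\cdot,K,\oo)$: splitting $|m_\mu(z,K,\oo)-m_\mu(y,K',\oo)|\le|m_\mu(z,K,\oo)-m_\mu(y,K,\oo)|+|m_\mu(y,K,\oo)-m_\mu(y,K',\oo)|$, the first term is bounded via sub-additivity and~(3), while the second is obtained by manipulating the inf defining $m_\mu(\cdot,K,\oo)$ together with the $L_\mu$-Lipschitz upper bound. For the dynamic programming principle~(5), the $\le$ direction is sub-additivity applied to each $z\in\partial U$. For the $\ge$ direction, I would introduce $\phi(y)=\inf_{z\in\partial U}[m_\mu(y,z,\oo)+m_\mu(z,x,\oo)]$, observe that each $w_z=m_\mu(\cdot,z,\oo)+m_\mu(z,x,\oo)$ is a sub-solution on $\R\setminus\{z\}$, and apply property~(1) on $V=\R\setminus\overline U$ (so that $z\in\overline U\setminus V$) to obtain $w_z(y)-m_\mu(y,x,\oo)\le\max_{z'\in\partial U}[w_z(z')-m_\mu(z',x,\oo)]$; compactness of $\partial U$ (or an equi-Lipschitz argument if $\partial U$ is unbounded) then selects a minimiser $\bar z$ at which the right-hand side vanishes.

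The main obstacle is the interplay between~(1) on unbounded $U$ and the $\ge$ half of~(5): both require Proposition~\ref{principe de comparaison du probleme metrique sur la jonction} combined with the linear growth control from~(3), which is what forces the ordering (2)$\to$(3)$\to$(1) above. A secondary subtlety is verifying that $l_\mu$ in~(3) is genuinely positive and proportional to $\mu-\tilde A(\oo)$ uniformly in $\oo$, which relies on~(H5) and~${\bf (H6)}$ through the characterization $\tilde A(\oo)=\sup_y H(0,y,\oo)$ established earlier in this section.
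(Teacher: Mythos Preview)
Your approach matches the paper's: the paper only details the proof of the bilipschitz bound~(3)---via the Lipschitz estimate $H(p,y,\oo)-H(0,y,\oo)\le c|p|$ together with $\tilde A(\oo)=\sup_y H(0,y,\oo)$, exactly as you sketch---and defers (1), (2), (4), (5) to \cite[Proposition~3.1]{homog.stoch.quantitative.premier.ordre.concexe}, emphasising only that stationarity of $H$ is nowhere used.

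There is however a gap in your sketch of the $\ge$ half of~(5). Applying~(1) to $w_z$ on $V=\R\setminus\overline U$ yields
\[
w_z(y)-m_\mu(y,x,\oo)\le\max_{z'\in\partial U}\bigl[m_\mu(z',z,\oo)+m_\mu(z,x,\oo)-m_\mu(z',x,\oo)\bigr],
\]
and by sub-additivity each bracket is $\ge0$, so the right-hand side is $\ge0$; but it need not vanish for \emph{any} choice of $z$. For instance if $U=(a,b)$ with $a<x<b$ and $z=a$, the term at $z'=b$ equals $m_\mu(b,a,\oo)+m_\mu(a,x,\oo)-m_\mu(b,x,\oo)$, which is strictly positive unless the ``geodesic'' from $x$ to $b$ detours through $a$. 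The one-dimensional fix is to pick $z^*\in\partial U$ lying on the segment between $x$ and $y$ (such a point exists because $x\in U$ and $y\notin U$) and to apply~(1)---or directly the comparison of Proposition~\ref{principe de comparaison du probleme metrique sur la jonction}---on the half-line on the $y$-side of $z^*$: that half-line has $\{z^*\}$ as its sole finite boundary point, where $w_{z^*}(z^*)=m_\mu(z^*,x,\oo)$ exactly, and the linear lower bound on $m_\mu(\cdot,x,\oo)$ from~(3) supplies the growth hypothesis needed for the unbounded comparison. This gives $w_{z^*}(y)\le m_\mu(y,x,\oo)$ and hence the desired inequality.
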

\begin{proof}
The proofs of these properties are based on the maximality of $m_{\mu}$, assumptions ${\bf (H2)}$, ${\bf (H3)}$ and the comparison principle Proposition \ref{principe de comparaison du probleme metrique sur la jonction}. In particular, these properties don't use the stationarity of $H$. For the reader's convenience, we just prove inequality \eqref{ineg.sol.metr.prob}, the other proofs being similar to the ones in \cite[Proposition 3.1]{homog.stoch.quantitative.premier.ordre.concexe}. 

Since the hamiltonian $H$ is coercive, convex and verifies ${\bf (H6)}$, then for all $\mu > \tilde {A}(\oo)$ there exists $p^{+}_{y}>0$ and $p^{-}_{y}<0$ such that $H(p^{+}_{y},y,\oo)=H(p^{-}_{y},y,\oo)=\mu$. Moreover, for all $p \in [p^{-}_{y},p^{+}_{y}]$, we have that $H(p,y,\oo)\le \mu$ and 
$$\mu- \tilde {A}(\oo)\le H(p_y^\pm,y,\oo)-H(0,y,\oo) \le c|p_y^\pm| .
$$
 Let $l_{\mu}=\frac{1}{c}(\mu- \tilde {A}(\oo))$. Then we have that $p^{-}_{y}\le -l_{\mu}\le l_{\mu} (y-x)/|y-x|\le l_{\mu}\le p^{+}_{y}$. 
Hence, the function $y \to l_{\mu}|y-x|$ satisfies
$$   H(l_{\mu} (y-x)/|y-x|,y,\oo)\le \mu  .  
$$ 
So, by the maximality's property of $ m_{\mu}(\cdot,x,\oo) $, we obtain the left side of \eqref{ineg.sol.metr.prob}. For the second inequality,  we have that the function $y \to L_{\mu}|y-x|$ is, by coercivity of $ H$,  a super-solution of \eqref{P.M.J.for.all.x}, for $L_{\mu}$ large enough. Then we obtain the right side of \eqref{ineg.sol.metr.prob} by comparison. 
\end{proof}

		We now give some consequences of the properties given in Proposition \ref{proprietes verifie par la soluton du probleme metrique}. In particular, we will give a localization property of the solution $m_{\mu}(\cdot,x,w)$ of \eqref{P.M.J.for.all.x}. We begin by the definition of the sub-level set of $m_{\mu}(\cdot,x,\oo)$.
		\begin{definition}
			For each $t >0$, $\mu$, we define the reachable set to $x$ in time $t$ by
			$$\mathcal R_{\mu,t}(x)=\mathcal R_{\mu,t}=\{(y,\oo)\in \R\times \o; m_{\mu}(y,x,\oo) \le t\},$$ 
and for $\oo\in \Omega$ and $\mu>\tilde A(\oo)$,
$$
\mathcal R_{\mu,t}^{\oo}=\{y \in \R ; (y,\oo) \in \mathcal R_{\mu,t}\}.$$  
		\end{definition} 
		If we consider, for $x\in \R$, $U$ as $$ U=\{y \in \R; m_{\mu}(y,x,\oo)<t\},$$then \eqref{donner la solution du probleme metrique sous formr de minimum et de somme sur le bord et la valeur de m en cette valeur} and Proposition \ref{representation de m mu avec k}  yields that, for every $t>0$ and $y\in \R$ such that $ m_{\mu}(y,x,\oo)\ge t$, \begin{equation}\label{la solution du pmj comme somme de t plus min dur le sous niveau}
		m_{\mu}(y,x,\oo) =t+ \min\limits_{z \in \mathcal R_{\mu,t}^{\oo}}m_{\mu}(y,z,\oo)=t+m_{\mu}(y,\mathcal{R}_{\mu,t}^{\oo},\oo).\end{equation}
		Now using \eqref{ineg.sol.metr.prob}, we obtain that, for every $t$ and $\oo$,\begin{equation}\label{l inclusion de sous niveau utilisant l equivalence a la distance euclidenne}
		B _{t/L_{\mu}}\subseteq\mathcal R_{\mu,t}^{\oo}\subseteq B _{t/l_{\mu}},
		\end{equation} where $B _{r}$, is a ball of radius $r$  centred in  $x$. Then, $\mathcal R_{\mu,t}^{\oo}$ is a compact set in $\R$.
We now give the property of localization of $m_{\mu}(\cdot,x,\oo)$.
		\begin{lem} [{{\cite[Lemma 3.4]{homog.stoch.quantitative.premier.ordre.concexe}}}]\label{localistion de la solution du pmj}
			For every $\oo \in \o,~\mu>\tilde{A}(\oo)$, $x\in \R$, and $u\in \textbf{Lip}(R_{\mu,t}^{\oo})$, $$H(Du,y,\oo)\le \mu \mbox{ in } \mathcal R_{\mu,t}^{\oo} \mbox{ implies that }u(\cdot,\oo)-u(x,\oo) \le m_{\mu}(\cdot,x,\oo) \mbox{ in } \mathcal R_{\mu,t}^{\oo}.
$$
		Then we deduce that we can represent the solution $m_{\mu}(\cdot,x,\oo)$ of \eqref{P.M.J.for.all.x} in $\mathcal{ R}_{\mu,t}^{\oo}$ as 
		$$ m_{\mu}(y,x,\oo)=\sup\{w(y,\oo)-w(x,\oo); w\in \mathbf{Lip}(\mathcal{R}_{\mu,t}^{\oo}) ,H(Dw,y,\oo)\le \mu \mbox{ in }\mathcal{R}_{\mu,t}^{\oo}       \} .  $$		\end{lem}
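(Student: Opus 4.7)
The plan is to reduce the localization statement to the global maximality property of $m_\mu(\cdot,x,\omega)$ encoded in formula \eqref{repres.de.la.solu.de.pmj.pour.tout.x}. The strategy is to extend the local Lipschitz subsolution $u$ from $\mathcal R_{\mu,t}^\omega$ to a globally defined Lipschitz subsolution $\tilde u$ on $\R$ with $\tilde u=u$ on $\mathcal R_{\mu,t}^\omega$. Once this is done, noting that $x\in\mathcal R_{\mu,t}^\omega$ since $m_\mu(x,x,\omega)=0\le t$, formula \eqref{repres.de.la.solu.de.pmj.pour.tout.x} applied to $\tilde u$ yields $\tilde u(y)-\tilde u(x)\le m_\mu(y,x,\omega)$ for every $y\in\R$, and restricting to $\mathcal R_{\mu,t}^\omega$ gives $u(y,\omega)-u(x,\omega)\le m_\mu(y,x,\omega)$, the first claim.

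For the extension, I would exploit that by \eqref{l inclusion de sous niveau utilisant l equivalence a la distance euclidenne} the set $\mathcal R_{\mu,t}^\omega$ is compact in $\R$, hence in dimension one decomposes as a (countable) union of closed intervals. To fix ideas in the case $\mathcal R_{\mu,t}^\omega=[a,b]$, I would set
$$\tilde u(y)=\begin{cases} u(a,\omega), & y<a,\\ u(y,\omega), & y\in[a,b],\\ u(b,\omega), & y>b,\end{cases}$$
and for a more general union of intervals use analogous constant patches on each unbounded end and in each gap. This $\tilde u$ is clearly globally Lipschitz.

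The delicate step, and the main obstacle, is to verify that $\tilde u$ is a global viscosity subsolution of $H(D\tilde u,y,\omega)\le\mu$. Inside each interval the property is inherited from the hypothesis on $u$. Outside $\mathcal R_{\mu,t}^\omega$, $\tilde u$ is locally constant so $D\tilde u=0$ and $H(0,y,\omega)\le\tilde A(\omega)\le\mu$ by ${\bf (H6)}$ and the definition of $\tilde A(\omega)$. At a boundary point, say $y_0=b$, for any smooth test $\varphi$ such that $\tilde u-\varphi$ attains a local maximum at $b$ with $\tilde u(b)=\varphi(b)$, the constancy of $\tilde u$ on $(b,\infty)$ forces $\varphi'(b)\ge 0$, while the interior subsolution property of $u$ on $(a,b)$, combined with the continuity in $y$ of the positive root $p_y^+$ of $H(\cdot,y,\omega)=\mu$ (continuity provided by ${\bf (H1)}$, ${\bf (H3)}$ and ${\bf (H4)}$), forces $\varphi'(b)\le p_b^+$. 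The convexity ${\bf (H5)}$ of $H$ in $p$, together with $H(0,b,\omega)\le\mu$ and $H(p_b^+,b,\omega)=\mu$, then implies $H(\varphi'(b),b,\omega)\le\mu$; the symmetric analysis at $a$ concludes the verification.

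The representation formula for $m_\mu(\cdot,x,\omega)$ restricted to $\mathcal R_{\mu,t}^\omega$ then follows immediately: the inequality $\ge$ comes from restricting to $\mathcal R_{\mu,t}^\omega$ any admissible global subsolution in \eqref{repres.de.la.solu.de.pmj.pour.tout.x}, while $\le$ is precisely the localization just proved.
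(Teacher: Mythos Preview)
Your argument is correct and gives a valid one-dimensional proof; the paper itself provides no proof beyond citing \cite{homog.stoch.quantitative.premier.ordre.concexe}, whose argument is dimension-independent and proceeds instead through the maximality property \eqref{sous solution sur un un  ouvert u avec la solution du p.m.j propritete de maximalite} rather than by extending the subsolution.

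Two remarks on your write-up. First, in this setting $\mathcal R_{\mu,t}^\omega$ is in fact a \emph{single} closed interval $[a,b]$ containing $x$: since $H(0,s,\omega)\le\tilde A(\omega)<\mu$, the positive root $p_s^+$ is strictly positive, and one checks that $m_\mu(y,x,\omega)=\int_x^y p_s^+\,ds$ for $y>x$ (and symmetrically for $y<x$), so $m_\mu(\cdot,x,\omega)$ is strictly monotone on each side of $x$. This matters because your suggested ``constant patches in each gap'' for a hypothetical multi-interval sublevel set would in general produce jump discontinuities at the inner endpoints and therefore would not yield a Lipschitz extension; fortunately that case never arises here, but you should say so.

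Second, once the integral formula for $m_\mu$ is in hand, the localization follows even more directly, without any extension: for $y>x$ in $\mathcal R_{\mu,t}^\omega$, the convexity {\bf(H5)} gives $Du(s)\le p_s^+$ a.e.\ on $(x,y)$, hence
\[
u(y,\omega)-u(x,\omega)=\int_x^y Du(s)\,ds\le\int_x^y p_s^+\,ds=m_\mu(y,x,\omega),
\]
and similarly for $y<x$. Your extension-by-constants route is nonetheless correct as written; the boundary verification at $a$ and $b$ via the bound $0\le\varphi'(b)\le p_b^+$ and convexity is sound.
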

		\begin{lem}
		For every $\oo\in \o$, $\mu >\tilde{A}(\oo)$ and $0\le t\le s$, we have that 
		\begin{equation}\label{distance entre les sous level}
		dist_{\mathcal{H}}(\mathcal{R}_{\mu,t}^\oo,\mathcal{R}_{\mu,s}^\oo)\le \frac{1}{l_\mu}(s-t).
		\end{equation}
		\end{lem}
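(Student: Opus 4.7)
The plan is to exploit the fact that sub-level sets of $m_\mu(\cdot,x,\omega)$ are nested, so one of the two terms in the Hausdorff distance is trivial, and the other is controlled by the representation formula \eqref{la solution du pmj comme somme de t plus min dur le sous niveau} combined with the bi-Lipschitz estimate \eqref{ineg.sol.metr.prob}.

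First I would observe that since $m_\mu(\cdot,x,\omega)$ is a fixed function of $y$, the sub-level sets satisfy $\mathcal{R}_{\mu,t}^\omega\subseteq \mathcal{R}_{\mu,s}^\omega$ whenever $t\le s$. By the definition of the Hausdorff distance, this reduces the problem to proving
$$\sup_{y\in \mathcal{R}_{\mu,s}^\omega}\inf_{z\in \mathcal{R}_{\mu,t}^\omega}|y-z|\le \frac{1}{l_\mu}(s-t),$$
since the other term $\sup_{z\in \mathcal{R}_{\mu,t}^\omega}\inf_{y\in \mathcal{R}_{\mu,s}^\omega}|y-z|$ vanishes by the inclusion.

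Next, fix $y\in \mathcal{R}_{\mu,s}^\omega$. If $m_\mu(y,x,\omega)\le t$, then $y\in \mathcal{R}_{\mu,t}^\omega$ and the inner infimum is zero, so nothing is to prove. Otherwise $m_\mu(y,x,\omega)>t$, and I would apply \eqref{la solution du pmj comme somme de t plus min dur le sous niveau} (together with Proposition \ref{representation de m mu avec k}) to get
$$m_\mu(y,x,\omega)=t+\min_{z\in \mathcal{R}_{\mu,t}^\omega}m_\mu(y,z,\omega).$$
Because $\mathcal{R}_{\mu,t}^\omega$ is compact (by \eqref{l inclusion de sous niveau utilisant l equivalence a la distance euclidenne}), the minimum is attained at some $z^*\in \mathcal{R}_{\mu,t}^\omega$, and by assumption $m_\mu(y,x,\omega)\le s$, so $m_\mu(y,z^*,\omega)\le s-t$.

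Finally, the lower bound in \eqref{ineg.sol.metr.prob} applied to $m_\mu(y,z^*,\omega)$ gives $l_\mu|y-z^*|\le m_\mu(y,z^*,\omega)\le s-t$, hence $\inf_{z\in \mathcal{R}_{\mu,t}^\omega}|y-z|\le |y-z^*|\le (s-t)/l_\mu$. Taking the supremum over $y\in \mathcal{R}_{\mu,s}^\omega$ concludes the proof. There is no real obstacle here: the statement is essentially a translation of the additive decomposition \eqref{la solution du pmj comme somme de t plus min dur le sous niveau} into Euclidean distance via the equivalence \eqref{ineg.sol.metr.prob}, both of which have already been established in Proposition \ref{proprietes verifie par la soluton du probleme metrique}.
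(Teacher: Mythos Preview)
Your proof is correct and follows essentially the same approach as the paper: both use the inclusion $\mathcal{R}_{\mu,t}^\oo\subseteq\mathcal{R}_{\mu,s}^\oo$ to reduce to one side of the Hausdorff distance, then combine the decomposition \eqref{la solution du pmj comme somme de t plus min dur le sous niveau} with the lower bound in \eqref{ineg.sol.metr.prob}. The only cosmetic difference is that the paper writes the key inequality as $l_\mu\,\mathrm{dist}(y,\mathcal{R}_{\mu,t}^\oo)\le m_\mu(y,\mathcal{R}_{\mu,t}^\oo,\oo)$ directly, while you pick an explicit minimizer $z^*$ and apply \eqref{ineg.sol.metr.prob} at that point.
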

		\begin{proof}
		We have, for $t \le s$, that $\mathcal{R}_{\mu,t}^{\oo}\subseteq \mathcal{R}_{\mu,s}^{\oo} $. So to obtain the result it's enough to prove that 
$$\mathcal{R}_{\mu,s}^{\oo}\subseteq \mathcal{R}_{\mu,t}^{\oo}+ \frac{1}{l_\mu}(s-t)\overline{{B}}_{1}.$$
By  \eqref{ineg.sol.metr.prob} and \eqref{la solution du pmj comme somme de t plus min dur le sous niveau}, we have, for all $y$ such that $m_{\mu}(y,x,t)\ge t$, that
$$  
t+l_{\mu}dist(y,\mathcal{R}_{\mu,t}^\oo)\le t+m_{\mu}(y,\mathcal{R}_{\mu,t}^\oo,\oo)=m_{\mu}(y,x,\oo). 
$$In particular, for $y \in \mathcal{R}_{\mu,s}^{\oo}$, we have that 
$$ 	t+l_{\mu}dist(y,\mathcal{R}_{\mu,t}^{\oo})\le m_{\mu}(y,x,\oo)\le s. 
$$
Thus for all $y\in  \mathcal{R}_{\mu,s}^\oo\setminus \mathcal{R}_{\mu,t}^\oo $, we have
$$dist(y,\mathcal{R}_{\mu,t}^{\oo})\le\frac {s-t}{l_\mu}.$$
This completes the proof. 
		\end{proof}

\subsection{Deterministic flux limiter}\label{sec:43}
In this subsection, we show that under Assumption ${\bf(H7)}$ (either ${\bf (H7}$-${\bf WFL)}$ or ${\bf (H7}$-${\bf \eps)}$), the stochastic flux limiter is in fact deterministic.
\subsubsection{The case of Assumption ${\bf (H7}$-${\bf WFL)}$}
 
\begin{thm}[Definition of $\overline A$]\label{le.flux.limitteur.est.deterministe}
 Assume ${\bf (H}$-${\bf WFL)}$. Then, there exists $\o_{1}$ of full probability such that, for all $\oo\in \o_{1}$, the stochastic flux limiter $\tilde A(\omega)$ is given by
 $$\tilde{A}(\oo) =\overline A:= \max (\mu ^{\star}_{L},\mu ^{\star}_{R}).$$
  \end{thm}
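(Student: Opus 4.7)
The plan is to exploit the representation $\tilde A(\oo)=\sup_{y\in\R}H(0,y,\oo)$ from the preceding lemma together with the piecewise form \eqref{forme restrictive de H}. The whole argument reduces to showing that for $\a=L,R$, one has almost surely
$$\sup_{y\le -1}H_L(0,y,\oo)=\mu^{\star}_{L},\qquad \sup_{y\ge 1}H_R(0,y,\oo)=\mu^{\star}_{R}.$$

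First, I would apply the reasoning of the preceding lemma --- which only uses ${\bf (H1)}$-${\bf (H6)}$ --- to each stationary ergodic hamiltonian $H_\a$ individually. Note that ${\bf (H6)}$ does hold for $H_\a$ on a full-probability event: it holds pointwise on $\{y\le -1\}$ (resp.\ $\{y\ge 1\}$) because $H$ coincides with $H_L$ (resp.\ $H_R$) there and satisfies ${\bf (H6)}$, and it transfers to all $y\in\R$ almost surely by stationarity of $H_\a$ combined with ergodicity. This yields $\mu^{\star}_{\a}(\oo)=\sup_y H_\a(0,y,\oo)$ a.s., and combined with Proposition~\ref{pro:3.1}, produces an event $\o_1$ of full probability on which $\sup_{y\in\R}H_\a(0,y,\oo)=\mu^{\star}_{\a}$ for both $\a\in\{L,R\}$.

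Second, I would upgrade this full-line supremum to a half-line supremum. Writing $H_\a(0,y,\oo)=H_\a(0,0,\tau_y\oo)$ by stationarity, the unit-range independence ${\bf (H9)}$ together with the modulus of continuity ${\bf (H4)}$ (to reduce the supremum to a countable dense set of $y$'s) allow a standard Borel--Cantelli / ergodic recurrence argument: the orbit $\{\tau_y\oo\}_{y\le -1}$ a.s.\ visits every level set $\{H_L(0,0,\cdot)>\mu^{\star}_{L}-\e\}$, and hence $\sup_{y\le -1}H_L(0,y,\oo)=\mu^{\star}_{L}$ almost surely. The analogous statement on $\{y\ge 1\}$ gives $\sup_{y\ge 1}H_R(0,y,\oo)=\mu^{\star}_{R}$, on a possibly smaller event of full probability which I still denote $\o_1$.

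Finally, I would combine these two identities with the decomposition \eqref{forme restrictive de H} at $p=0$. The lower bound $\tilde A(\oo)\ge\max(\mu^{\star}_{L},\mu^{\star}_{R})$ is immediate, since $H(0,y,\oo)=H_L(0,y,\oo)$ for $y\le -1$ and $H(0,y,\oo)=H_R(0,y,\oo)$ for $y\ge 1$, so
$$\tilde A(\oo)\ge\max\bigl(\sup_{y\le -1}H(0,y,\oo),\ \sup_{y\ge 1}H(0,y,\oo)\bigr)=\max(\mu^{\star}_{L},\mu^{\star}_{R}).$$
For the upper bound, since $\phi(y)\in[0,1]$, one has $H(0,y,\oo)\le\max(H_L(0,y,\oo),H_R(0,y,\oo))\le\max(\mu^{\star}_{L},\mu^{\star}_{R})$ for every $y\in\R$ on $\o_1$, and taking the supremum in $y$ closes the argument. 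The main obstacle I anticipate is the second step: converting the almost-sure constancy of the global supremum into an almost-sure equality along every half-line requires the ergodic recurrence of $\tau_y\oo$ as $y\to\pm\infty$, and so rests critically on the independence ${\bf (H9)}$ combined with the continuity ${\bf (H4)}$.
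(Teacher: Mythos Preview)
Your proposal is correct and follows essentially the same route as the paper: both reduce to the identity $\tilde A(\oo)=\sup_y H(0,y,\oo)$, both pivot on showing that the half-line suprema $\sup_{y\le -1}H_L(0,y,\oo)$ and $\sup_{y\ge 1}H_R(0,y,\oo)$ already attain the full-line values $\mu^{\star}_L,\mu^{\star}_R$ almost surely (the paper does this by an explicit Borel--Cantelli computation under ${\bf (H9)}$, which is exactly what your ``ergodic recurrence'' sketch amounts to), and both close the upper bound via the convex-combination inequality $H\le\max(H_L,H_R)$. Your extra step of checking that ${\bf (H6)}$ transfers to $H_\a$ by stationarity, and hence that $\mu^{\star}_\a=\sup_y H_\a(0,y,\oo)$, is a useful clarification that the paper leaves implicit.
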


 \begin{proof}
 We begin to prove that
\begin{equation} \label{le flux limitteur est deterministe d en bas}
	\tilde{A}(\oo) \ge\max (\mu ^{\star}_{L},\mu ^{\star}_{R}).
\end{equation}
 To do that we need the following lemma
		\begin{lem}
There exists $\o_{1}$ of full probability such that for all $\oo \in \o_{1}$, we have
	$$\sup_{y\in[1,+\infty[} H_R(0,y,\oo)=\sup_{y\in \R} H_R(0,y,\oo)\quad {\rm and}\quad \sup_{y\in]-\infty, -1]} H_L(0,y,\oo)=\sup_{y\in \R} H_L(0,y,\oo).$$
		\end{lem}
\begin{proof}
  We have to prove that $$P\left(	\sup_{y\in[1,+\infty[} H_R(0,y,\oo)\neq \sup_{y\in \R} H_R(0,y,\oo)\right) =0.$$
  To simplify the presentation, we assume that $\sup\limits_{y\in[2k,2k+1[} H_R(0,y,\oo)$ and $\sup\limits_{y\in[2k+1,2k+2[} H_R(0,y,\oo)$ are $\textbf{F}$-measurable, otherwise, we have to approximate the sup by a sup on a countable set. By assumption ${\bf (H9)}$, we have that either   
 $$P\left(\sup\limits_{y\in[2k,2k+1[} H_R(0,y,\oo)< \mu^{\star}_{R}\right)<1\quad{\rm or}\quad P\left(\sup\limits_{y\in[2k+1,2k+2[} H_R(0,y,\oo)< \mu^{\star}_{R}\right)<1.$$
 We assume that $P\left(\sup\limits_{y\in[2k,2k+1[} H_R(0,y,\oo)< \mu^{\star}_{R}\right)<1$, the other case being similar. By assumption \textbf{(H9)}  we then have 
 $$
    \begin{aligned}
    &P\left(	\sup_{y\in[1,+\infty[} H_R(0,y,\oo)\neq \sup_{y\in \R} H_R(0,y,\oo)\right)
    \\&= P\left(\cap_{n\ge 1}\left\{	\sup_{y\in[n,n+1[} H_R(0,y,\oo)< \sup_{y\in \R} H_R(0,y,\oo)\right\}\right)
 \\&\le P\left(\cap_{k\ge 1}\left\{	\sup_{y\in[2k,2k+1[} H_R(0,y,\oo)< \sup_{y\in \R} H_R(0,y,\oo)\right\}\right)\\&= \lim\limits_{N\to \infty} \prod_{k=1}^{N}P\left(\sup_{y\in[2k,2k+1[} H_R(0,y,\oo)< \mu^{\star}_{R}
 \right)\\&=\lim\limits_{N\to \infty}\left(P\left(\sup_{y\in[2k,2k+1[} H_R(0,y,\oo)< \mu^{\star}_{R}
 \right)\right)^{N}= 0.
 \end{aligned} 
 $$
\end{proof}
To prove that $\tilde{A}(\oo) \ge \max(\mu^{\star}_{L},\mu^{\star}_{R})$, we assume, by contradiction, that $\tilde A(\oo)<\mu^\star_R$ (the case $\tilde A(\oo)<\mu^\star_L$ being similar).
	For all $\oo\in \o_{1}$, we then have
	$$\sup_{y\in \R} H_R(0,y,\oo)>\tilde A(\oo)=\sup_{y\in \R} H(0,y,\oo)\ge \sup_{y\in[1,+\infty[} H(0,y,\oo)= \sup_{y\in[1,+\infty[} H_R(0,y,\oo),$$
	which is absurd.

	We now prove that 
$$\tilde{A}(\oo)\le \max (\mu_{L}^{\star},\mu_{R}^{\star}).$$ 
		
 Under assumptions ${\bf (H}$-${\bf WFL)}$, we have that 
 \begin{align*}
& \{\mu; \mbox{ s.t } \exists v \in \mathbf{Lip}; \max(H_{L}(Dv,y,\oo),H_{R}(Dv,y,\oo)) \le \mu~\mbox{in }	~\R\} \\
 \subset& \{\mu; \mbox{ s.t } \exists v \in \mathbf{Lip}; H(Dv,y,\oo) \le \mu~\mbox{in }	~\R\} 
 \end{align*}
  then we get 
  $$ \max(\mu_{L}^{\star},\mu_{R}^{\star})\ge \tilde{A}(\oo). $$
We conclude that $\tilde{A}$ is deterministic in $  \o_{1}$.
		
\end{proof}

\subsubsection{The case of Assumption ${\bf (H7}$-${\bf \eps)}$}
As in Proposition \ref{pro:3.1}, we denote $\mu_0^\star$ by
$$
\mu^{\star}_{0} = \inf\{\mu;\mbox{ s.t there exists a function}~v \in \mathbf{Lip} ~\mbox{which satisfies}~H_{0}(Dv,y,\oo) \le \mu \mbox{ in } \R\}. 
$$
As in Corollary \ref{cor:3.6}, $\mu_0^\star=\inf_p \overline H_0(p)$.
\begin{thm}[Definition of $\overline A$]\label{le.flux.limitteur.est.deterministe-eps}
 Assume ${\bf (H}$-${\bf \eps)}$. Then, there exists a decreasing sequence of set $\o_1^{\eps}$ such that $P(\o_1^\eps)=1-\theta(\eps)$ with $\theta (\eps)\to 0$ as $\eps \to 0$ and such that, for all $\oo\in \o_{1}^\eps$, the stochastic flux limiter $\tilde A(\omega)$ is given by
 $$\tilde{A}(\oo) =\overline A:= \max(\min\limits_{p}\overline{H}_{0}(p),\min\limits_{p}\overline{H}_{L}(p),\min\limits_{p}\overline{H}_{R}(p))=\min\limits_{p}\overline{H}_{0}(p)=\mu^{\star}_{0}.$$
  \end{thm}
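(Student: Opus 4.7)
The plan is to adapt the proof of Theorem \ref{le.flux.limitteur.est.deterministe}, but to work quantitatively since the region where $H$ agrees with $H_0$ is of bounded length $2/\sqrt{\eps}$ rather than half-infinite. For the upper bound $\tilde A(\oo)\le \mu^\star_0$ a.s., I observe that the three coefficients appearing in \eqref{eq:2-3} are non-negative and sum to $1$: indeed $\psi_1^\eps(y)\psi_1^\eps(-y)\equiv 0$ since the support conditions $y>1/\sqrt{\eps}$ and $-y>1/\sqrt{\eps}$ are incompatible. Combined with $H_0\ge H_L, H_R$, this shows $H(p,y,\oo)\le H_0(p,y,\oo)$ everywhere. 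Applying the ``new definition'' lemma preceding Theorem \ref{theo.existence.de.la.solution.du.pmj} to the stationary ergodic Hamiltonian $H_0$, together with Proposition \ref{pro:3.1}, gives $\sup_{y\in\R} H_0(0,y,\oo)=\mu^\star_0$ on a full-probability event $\Omega'$, so $\tilde A(\oo)\le \mu^\star_0$ there. The identity $\overline A=\mu^\star_0$ follows from $H_0\ge H_L, H_R$, which via the variational formula for $\overline H_\a$ yields $\overline H_0\ge \overline H_L,\overline H_R$, hence $\mu^\star_0\ge \mu^\star_L,\mu^\star_R$ by Corollary \ref{cor:3.6}.

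For the lower bound I would use that $H=H_0$ on $[-1/\sqrt{\eps},1/\sqrt{\eps}]$, so $\tilde A(\oo)\ge \sup_{|y|\le 1/\sqrt{\eps}}H_0(0,y,\oo)$, reducing the problem to showing that this quantity equals $\mu^\star_0$ with probability $1-\theta(\eps)$. First, I would establish
$$
p_0:=P\bigl(\sup_{y\in[0,1]} H_0(0,y,\oo)=\mu^\star_0\bigr)>0
$$
by contradiction: if $p_0=0$, stationarity and countable sub-additivity would give $P(\sup_\R H_0(0,y,\oo)=\mu^\star_0)=0$, contradicting the first paragraph. Then I partition $[-1/\sqrt{\eps},1/\sqrt{\eps}]$ into $N\sim 1/(2\sqrt{\eps})$ unit intervals $[2k,2k+1]$ separated by unit gaps. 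By the unit range of dependence \textbf{(H9)}, the events $A_k=\{\sup_{y\in[2k,2k+1]}H_0(0,y,\oo)=\mu^\star_0\}$ are independent, and by stationarity each has probability at least $p_0$. Setting $\Omega_1^\eps:=\Omega'\cap\bigcup_k A_k$ gives $P\bigl((\Omega_1^\eps)^c\bigr)\le (1-p_0)^N=:\theta(\eps)\to 0$, and on $\Omega_1^\eps$ the two bounds sandwich $\tilde A(\oo)=\mu^\star_0$. Monotonicity of $\Omega_1^\eps$ in $\eps$ can be enforced by further intersection with the sets at smaller scales.

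The main obstacle I anticipate is the measurability of $\sup_{y\in[2k,2k+1]}H_0(0,y,\oo)$ needed to apply \textbf{(H9)}; as in the proof of Theorem \ref{le.flux.limitteur.est.deterministe}, this is handled by replacing the sup with one over a countable dense subset, exploiting the space modulus of continuity \textbf{(H4)}.
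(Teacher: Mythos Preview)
Your proposal is correct and follows essentially the same route as the paper: the upper bound $\tilde A(\oo)\le\mu_0^\star$ via the pointwise inequality $H\le H_0$ (the paper states this directly, while you helpfully verify that the three coefficients in \eqref{eq:2-3} are non-negative and sum to $1$), and the lower bound by restricting to $[-1/\sqrt{\eps},1/\sqrt{\eps}]$ where $H=H_0$, partitioning into unit blocks with unit gaps, and invoking independence via \textbf{(H9)}.

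One small caveat on your justification of $p_0>0$: the implication ``$p_0=0$ $\Rightarrow$ $P(\sup_\R H_0(0,\cdot,\oo)=\mu_0^\star)=0$'' via countable subadditivity tacitly assumes $\{\sup_\R H_0(0,\cdot,\oo)=\mu_0^\star\}\subseteq\bigcup_k\{\sup_{[k,k+1]}H_0(0,\cdot,\oo)=\mu_0^\star\}$, which can fail when the supremum over $\R$ is not attained on any bounded interval. The paper asserts the equivalent statement $P(\sup_{[2k,2k+1]}H_0(0,\cdot,\oo)<\mu_0^\star)<1$ directly ``by assumption \textbf{(H9)}'' without further argument, so your treatment is no less complete than the original; both proofs leave this point at the same level of detail.
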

 
\begin{proof}
We begin to prove that 
\begin{equation}\label{eq:100}
\tilde{A}(\oo) \ge \mu^{\star}_{0}.
\end{equation}
 To do that we need the following lemma
		\begin{lem}
There exists a decreasing sequence of set $\o_1^{\eps}$ such that $P(\o_1^\eps)=1-\theta(\eps)$ with $\theta (\eps)\to 0$ as $\eps \to 0$ and such that, for all $\oo\in \o_{1}^\eps$, we have
$$\sup_{y\in[\frac{-1}{\sqrt{\eps}},\frac{1}{\sqrt{\eps}}]} H_0(0,y,\oo)=\sup_{y\in \R} H_0(0,y,\oo).
$$
\end{lem}

\begin{proof}
  We have to prove that
  $$P\left(	\sup_{y\in[\frac{-1}{\sqrt{\eps}},\frac{1}{\sqrt{\eps}}]} H_0(0,y,\oo)\neq
  \sup_{y\in \R} H_0(0,y,\oo)\right) =\theta(\e)\to 0 \; {\rm as}\; \e\to 0.
  $$
  To simplify the presentation, we assume that $\sup\limits_{y\in[2k,2k+1[} H_0(0,y,\oo)$ and $\sup\limits_{y\in[2k+1,2k+2[} H_0(0,y,\oo)$ are $\textbf{F}$-measurable, otherwise, we have to approximate the sup by a sup on a countable set. By assumption ${\bf (H9)}$, we have that either   
 $$P\left(\sup\limits_{y\in[2k,2k+1[} H_0(0,y,\oo)< \mu^{\star}_{0}\right)<1\quad{\rm or}\quad P\left(\sup\limits_{y\in[2k+1,2k+2[} H_0(0,y,\oo)< \mu^{\star}_{0}\right)<1.$$
 We assume that $P\left(\sup\limits_{y\in[2k,2k+1[} H_0(0,y,\oo)< \mu^{\star}_{0}\right)<1$, the other case being similar. By assumption \textbf{(H9)}  we then have 
 $$
    \begin{aligned}
    &P\left(	\sup_{y\in[\frac{-1}{\sqrt{\eps}},\frac{1}{\sqrt{\eps}}]} H_0(0,y,\oo)< \mu^{\star}_{0}\right)
    \\&\displaystyle{\le P\left(\bigcap_{n\in\left\{\left\lfloor\frac{-1}{\sqrt{\eps}}\right\rfloor+1,\dots,\left\lfloor \frac{1}{\sqrt{\eps}}\right\rfloor-1\right\}}\left\{	\sup_{y\in[n,n+1[} H_0(0,y,\oo)<\mu^{\star}_{0}\right\}\right)}
 \\&\displaystyle{\le P\left(\bigcap_{k\in \left\{\frac 12\left\lfloor\frac{-1}{\sqrt{\eps}}\right\rfloor+\frac 12,\dots,\frac 12\left\lfloor \frac{1}{\sqrt{\eps}}\right\rfloor-\frac 12\right\}}\left\{	\sup_{y\in[2k,2k+1]} H_0(0,y,\oo)< \mu^{\star}_{0}\right\}\right)}\\
 &= \prod_{k=\frac 12\left\lfloor\frac{-1}{\sqrt{\eps}}\right\rfloor+\frac 12}^{\frac 12\left\lfloor \frac{1}{\sqrt{\eps}}\right\rfloor-\frac 12}P\left(\sup_{y\in[2k,2k+1[} H_0(0,y,\oo)< \mu^{\star}_{0}
 \right)\\&=\left(P\left(\sup_{y\in[2k,2k+1[} H_0(0,y,\oo)< \mu^{\star}_{0}
 \right)\right)^{ \frac 12\left\lfloor\frac{1}{\sqrt{\eps}}\right\rfloor-\frac 12\left\lfloor\frac{-1}{\sqrt{\eps}}\right\rfloor}=:\theta(\eps)\to0\;{\rm as}\; \e\to 0.
 \end{aligned} 
 $$ 
  
\end{proof}
To prove \eqref{eq:100}, we assume by contradiction that $\tilde A(\oo)<\mu^\star_0$ for all $\oo\in \o_1^\e$. We then have
$$\mu^\star_0=\sup_{y\in \R} H_0(0,y,\oo)>\tilde A(\omega)=\sup_{y\in \R} H(0,y,\oo)\ge \sup_{y\in \left[\frac{-1}{\sqrt{\e}},\frac 1 {\sqrt \e}\right]}H(0,y,\oo)= \sup_{y\in \left[\frac{-1}{\sqrt{\e}},\frac 1 {\sqrt \e}\right]} H_0(0,y,\oo),$$
which is absurd.

	We now prove that 
$$\tilde{A}(\oo)\le \mu_{0}^{\star}.$$ 
		
 Under assumptions ${\bf (H}$-${\bf \e)}$ (and using that $H\le H_0$), we have that 
 \begin{align*}
& \{\mu; \mbox{ s.t } \exists v \in \mathbf{Lip}; H_{0}(Dv,y,\oo)\le \mu~\mbox{in }	~\R\} \\
 \subset& \{\mu; \mbox{ s.t } \exists v \in \mathbf{Lip}; H(Dv,y,\oo) \le \mu~\mbox{in }	~\R\}. 
 \end{align*}
We then get 
  $$ \mu_{0}^{\star}\ge \tilde{A}(\oo) $$
and we conclude that $\tilde{A}$ is deterministic in $  \o_{1}^\e$.

\end{proof}

\subsubsection{A counter example}\label{sec:42}
In this subsection we present a counter-example showing that if the perturbation zone in ${\bf (H7}$-${\bf \e)}$ doesn't depend on $\e$, then, in general,   we don't have the convergence result of $u^\eps$ to a deterministic function. More precisely we cannot obtain a deterministic flux limiter and  the supremum $\sup\limits_{y \in \R}H(0,y,\oo)$ depends on $\oo$ since $H$ is not stationnary. In particular, the limit of $u^\eps$ (if it exists, which is not clear in that case) can't be deterministic. To show that, we  give a counter example for a Bernoulli process. 
		
Given a probability space $(\o,\textbf F,P)$, we define a Bernoulli process $\{X_i\}_{i\in \mathbb N}$ such that, for $\oo\in \Omega$
$$
X_i(\oo)=\left\{
\begin{array}{ll}
0 &\mbox{ with probability }p ,\\
1 &\mbox{ with probability }1-p.
\end{array}
\right.
$$
We then define a   $C^{\infty}(\R)$ function $\overline \psi$ such that
	$$\overline{\psi}(y)=\left\{
		\begin{aligned}
		    &\psi_{0} &|y|\le 1/2,\\
		    & 0 & |y|\ge 3/4,
		\end{aligned}
		\right.$$ 	
where $-1<\psi _{0}<0$ is a constant. We now define a stationary ergodic function $\psi$ by
$$ \psi(y,\oo)=\overline \psi(y-2k) X_k(\oo)+1 \quad {\rm if}\; y \in [2k-1,2k+1].$$
and a stationary ergodic hamiltonian
	$$
		H_{L}(p,y,\oo  )=\left\{
		\begin{aligned}
		&(-p-\tilde{p}-k_{0})\psi(\oo,y), &p<-k_{0}-\tilde{p},\\
		&-|p+\tilde{p}|V_{L}(-\frac{1}{p+\tilde{p}})\psi(\oo,y), & -k_{0}-\tilde{p}\le p\le-\tilde{p},\\
		&(p+\tilde{p})\psi(\oo,y) , &p>-\tilde{p}.
		\end{aligned}
		\right.
		$$
	To define our Hamilton-Jacobi equation, we suppose that the right and left hamiltonians are equal, i.e $H_{L}=H_{R}$ and we consider $H:\R\times\R\times \o\to \R$, given by 
$$H(p,y,\oo)=H_{L}(p,y,\oo)\overline{\psi}(y).$$ 
Hence $H_0=H_L\psi_0$.
The main difference with Assumptions ${\bf (H}$-${\bf WFL)}$ is that the hamiltonian is reduced near the origin (by the factor $\overline \psi$) and so 
$H$ doesn't satisfies $\min(H_L,H_R)\le H\le \max(H_L,H_R)$. Moreover, contrary to the case of Assumption ${\bf (H}$-${\bf \e)}$, the radius of the perturbation's zone is fixed. In that case, which have been treated in \cite{Galise} in the periodic setting, we expect to have  $\bar A>\max(\mu_L^*,\mu_R^*)$, which means that the flux is limited at the origin. 

In the stochastic setting, we can define two sets 
 $$\tilde \o_{0}=\{\oo \in\o; X_0(\oo)=0 \} \mbox{ and }\tilde \o_{1}=\{\oo \in\o; X_0(\oo)=1 \}$$
 with $P(\tilde \o_{0})=p$ and $P(\tilde \o_{1})=1-p$. For $\oo\in \tilde \o_0$, we have that
$$\tilde A(\oo)= \sup\limits_{y \in \R}\left\{\overline{\psi}(y)H_{L}(0,y,\oo)\right\}
=-|\tilde{p}|V_{L}\left(-\frac{1}{\tilde{p}}\right)\psi_{0},$$ 
and for $\oo\in \tilde \o_{1}$ we have that	
$$ \tilde{A}(\oo)=\sup\limits_{y \in \R}\left\{\overline{\psi}(y)H_{L}(0,y,\oo)\right\}=-|\tilde{p}|V_{L}\left(-\frac{1}{\tilde{p}}\right)(\psi_{0}^{2}+\psi_{0}).$$
We then deduce that the flux limiter $\tilde A$ is stochastic and that the limit of $u^\eps$, if it exists, can't be deterministic.	

		\section{Homogenization result of metric problems defined in half space}\label{resultat d homogenization du probleme metrique}
		To prove the homogenization result for \eqref{P.en.epsilon}, it is necessary to prove a homogenization result for the metric problem \eqref{P.M.J.for.all.x}. This result is given in the following theorem.
\begin{thm}[Homogenization of the metric problem \eqref{P.M.J.for.all.x}]\label{homo.du.pmj}Assume ${\bf (H}$-${\bf \e)}$ (resp. ${\bf (H}$-${\bf WFL)})$ and let $\mu>\overline A$. Then, there exists a decreasing sequence of set $\o_0^{\eps}\subset \o_1^{\eps}$ such that $P(\o_0^\eps)=1-\theta(\eps)$ with $\theta (\eps)\to 0$ as $\eps \to 0$ (resp. $\o_0$ of full probability) and a function $\overline{m}_{\mu}:\R\times\R\to [0,+\infty[$  such that, for every $\oo\in \o_{0}^\eps$ (resp. $\oo \in \o_{0}$), we have  
$$
\frac{m_{\mu}(ty,tx,\oo)}{t} \to \overline{m}_{\mu}(y,x)  \mbox{ as }t\to \infty,
$$ 
where $m_{\mu}(\cdot,x,\oo) $ is the solution of \eqref{P.M.J.for.all.x}.
\end{thm}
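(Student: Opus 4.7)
The plan is to reduce the homogenization of $m_\mu$ to the stationary ergodic homogenization of $m_\mu^L$ and $m_\mu^R$ (Proposition \ref{pro:3.3}), exploiting the fact that $H\equiv H_R$ on $[1,+\infty)$ and $H\equiv H_L$ on $(-\infty,-1]$ in the WFL case (and on the analogous half-lines $\left[\frac{2}{\sqrt\eps},+\infty\right)$, $\left(-\infty,-\frac{2}{\sqrt\eps}\right]$ in the $\eps$-case). The candidate limit, motivated by applying \eqref{donner la solution du probleme metrique sous formr de minimum et de somme sur le bord et la valeur de m en cette valeur} with $U=(-\infty,0)$ when $tx<0<ty$ to obtain $m_\mu(ty,tx,\oo)=m_\mu(ty,0,\oo)+m_\mu(0,tx,\oo)$, is
$$
\overline m_\mu(y,x)=\begin{cases}
\overline m_\mu^R(y-x)& 0\le x,\;0\le y,\\
\overline m_\mu^L(y-x)& x\le 0,\;y\le 0,\\
\overline m_\mu^L(-x)+\overline m_\mu^R(y)& x\le 0\le y,\\
\overline m_\mu^L(y)+\overline m_\mu^R(-x)& y\le 0\le x.
\end{cases}
$$

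The core step is the same-side identity: for $tx,ty$ both in the $H_R$-region,
$$
m_\mu(ty,tx,\oo)=m_\mu^R(ty,tx,\oo).
$$
Both inequalities follow from the representation formula \eqref{repres.de.la.solu.de.pmj.pour.tout.x} combined with an extension across the perturbation zone. Given a global Lipschitz sub-solution $v$ of $H_R(\cdot,\cdot,\oo)\le\mu$, define $\tilde v(y)=v(y)$ for $y\ge 1$ and $\tilde v(y)=v(1)-\gamma(y-1)$ for $y<1$: on $(-\infty,1)$ the slope $-\gamma$ gives $H(-\gamma,y,\oo)=0\le\mu$ by ${\bf (H2)}$, on $(1,+\infty)$ the equality $H=H_R$ preserves the sub-solution property, and at $y=1$ the convexity ${\bf (H5)}$ handles the gluing (any admissible test slope lies between $v'(1^+)$ and $-\gamma$, and both of these yield a value of $H$ bounded by $\mu$). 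Hence $\tilde v$ is a global sub-solution of $H\le\mu$ with $\tilde v(ty)-\tilde v(tx)=v(ty)-v(tx)$, yielding $m_\mu\ge m_\mu^R$; the reverse inequality is obtained symmetrically. Combined with the two-point homogenization $m_\mu^R(ty,tx,\oo)/t\to\overline m_\mu^R(y-x)$, a standard consequence of Proposition \ref{pro:3.3} and the stationarity identity $m_\mu^R(ty,tx,\oo)=m_\mu^R(t(y-x),0,\tau_{tx}\oo)$, this gives $m_\mu(ty,tx,\oo)/t\to\overline m_\mu^R(y-x)$ on a full-probability event.

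For the opposite-sign case $x<0<y$, the junction decomposition reduces the problem to $m_\mu(ty,0,\oo)/t$ and $m_\mu(0,tx,\oo)/t$. Applying \eqref{donner la solution du probleme metrique sous formr de minimum et de somme sur le bord et la valeur de m en cette valeur} once more with $U=(-\infty,1)$ gives $m_\mu(ty,0,\oo)=m_\mu(ty,1,\oo)+m_\mu(1,0,\oo)$; the second term is bounded by $L_\mu$ thanks to \eqref{ineg.sol.metr.prob} and so contributes $o(1)$ after dividing by $t$, while the same-side identity gives $m_\mu(ty,1,\oo)=m_\mu^R(ty,1,\oo)$, whose rescaling converges to $\overline m_\mu^R(y)$. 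A symmetric argument with $U=(-1,+\infty)$ handles $m_\mu(0,tx,\oo)/t\to\overline m_\mu^L(-x)$, completing the identification. The remaining case $y<0<x$ is treated analogously with the junction decomposition based on $U=(0,+\infty)$.

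The main technical obstacle is the same-side identity $m_\mu=m_\mu^R$: although $H$ and $H_R$ coincide on $[1,+\infty)$, the two quantities are defined as maximum sub-solutions over global classes, and their comparison requires the extension argument sketched above. This extension hinges on the distinguished slope $p=-\gamma$ for which $H\equiv H_R\equiv 0$ (assumption ${\bf (H2)}$), on $\mu>\overline A$ so that the glued function remains a sub-solution at the gluing point, and on the convexity ${\bf (H5)}$. A secondary bookkeeping point is the construction of the full-probability event: one sets $\o_0=\o_1\cap\o_3$ in the WFL case (resp.\ $\o_0^\eps=\o_1^\eps\cap\o_3$ in the $\eps$-case), where $\o_3$ is the full-probability event from Proposition \ref{pro:3.3} applied to each of $H_L,H_R,H_0$; since these are finitely many full-probability events, the intersection retains $P(\o_0)=1$ in the WFL case and $P(\o_0^\eps)=1-\theta(\eps)$ in the $\eps$-case.
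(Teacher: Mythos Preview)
Your approach is essentially correct and takes a genuinely different route from the paper. The paper never reduces to the stationary problem; instead it proves a concentration inequality (Theorem~\ref{fluctuations}) for $m_\mu(y,x,\oo)-E[m_\mu(y,x,\oo)]$ via a martingale/Azuma argument on the filtration generated by the sub-level sets $\mathcal R_{\mu,t}^\oo$, and then applies Fekete's lemma to the sub-additive sequence of expectations. Your argument bypasses all of this by proving the exact identity $m_\mu=m_\mu^\alpha$ on the appropriate half-line and invoking Proposition~\ref{pro:3.3} directly. This is more elementary, exploits the specific structure {\bf (H7)} more heavily, and as a bonus produces the explicit form of $\overline m_\mu$ in one stroke (the paper obtains this separately in Theorem~\ref{homo.du.pmj-bis} via a perturbed--test--function argument, Proposition~\ref{proposition de la limite de la solution du probleme limite}). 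What the paper's route buys is the quantitative fluctuation estimate~\eqref{estimation des fluctuations}, which is of independent interest and does not rely on the half-line structure.

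There is one genuine error in your extension step. You take slope $-\gamma$ on $(-\infty,1)$ and claim $H(-\gamma,y,\oo)=0\le\mu$. But {\bf (H6)} combined with $H(-\gamma,y,\oo)=0$ forces $H(0,y,\oo)\le 0$, hence $\overline A=\sup_y H(0,y,\oo)\le 0$; so $\mu>\overline A$ does \emph{not} give $\mu\ge 0$, and your extension fails for $\mu\in(\overline A,0)$. The fix is to extend with slope $0$ instead: then $H(0,y,\oo)\le\tilde A(\oo)=\overline A<\mu$ for $\oo\in\o_1$ (resp.\ $\o_1^\eps$), and likewise $H_R(0,y,\oo)\le\mu_R^\star\le\overline A<\mu$ for the reverse inequality. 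This is exactly where the hypothesis $\mu>\overline A$ and the event $\o_1^\eps$ enter. A second minor point: the gluing at $y=1$ needs $v$ to be differentiable there for your convexity argument to close; since you only need $\tilde v(ty)-\tilde v(tx)=v(ty)-v(tx)$ for large $t$, simply glue at any point of differentiability in $[1,2]$.
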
 

When the hamiltonian is stationary ergodic, it is well known that $\overline m_\mu(y,x)$ is in fact a function of $y-x$. This result is no longer true in our setting. Nevertheless, if $x\ge0,y>0$ or $x\le 0,y<0$, we recover this property. This is explained in the following theorem
\begin{thm}\label{homo.du.pmj-bis} Under the same assumptions as the previous theorem, if moreover $x\ge 0,y>0$ (resp. $x\le 0,y<0$) then there exists $\overline{m}_{\mu}^{R}:\R\to [0,+\infty)$ (resp. $\overline{m}_{\mu}^{L}:\R\to [0,+\infty)$) such that
$$\overline{m}_{\mu}(y,x)=\overline{m}_{\mu}^{R}(y-x)\; (\mbox{resp. }=\overline{m}_{\mu}^{L}(y-x)).$$ 	
More precisely, if we define $\overline p_\mu^{\a,\pm}$ for $\a=R,L$ such that $\pm \overline p^{\a,\pm}_\mu\ge0$ and
$$H_\a( \overline p^{\a,\pm}_\mu)=H_\a^\pm( \overline p^{\a,\pm}_\mu)=\mu,$$
then
$$\overline{m}^{R}_{\mu}(y,x)= 
\left\{
\begin{aligned}
&\overline{p}_{\mu}^{R,+}(y-x) &\mbox{ for } y>x;\\
&\overline{p}_{\mu}^{R,-}(y-x) &\mbox{ for } y<x.
\end{aligned}
\right.
$$
$$\left(\textrm{resp. } \overline{m}^{L}_{\mu}(y,x)=
\left\{
\begin{aligned}
&\overline{p}_{\mu}^{L,+}(y-x) &\mbox{ for } y>x;\\
&\overline{p}_{\mu}^{L,-}(y-x) &\mbox{ for } y<x.
\end{aligned}
\right.\right)
$$
	
\end{thm}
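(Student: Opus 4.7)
I focus on $x\ge 0$, $y>0$, the case $x\le 0$, $y<0$ being symmetric (with $H_L$ replacing $H_R$). The key observation is that under both ${\bf (H7}$-${\bf WFL)}$ and ${\bf (H7}$-${\bf \eps)}$ there is a threshold $\rho>0$ ($\rho=1$ in the first case, $\rho=2/\sqrt \eps$ in the second, fixed as $t\to\infty$) such that $H(p,z,\oo)=H_R(p,z,\oo)$ for all $z\ge \rho$. For $t$ large and $x>0$ the rescaled points $tx,ty$ both lie in $[\rho,+\infty)$, where the Hamiltonian is stationary ergodic; the strategy is therefore to reduce to the stationary ergodic homogenization of Proposition \ref{pro:3.3}, namely the metric problem for $H_R$ alone, whose solution I denote by $m_\mu^R$.

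For $tx,ty\ge \rho$ I would establish the identity $m_\mu(ty,tx,\oo)=m_\mu^R(ty,tx,\oo)$. By the representation \eqref{repres.de.la.solu.de.pmj.pour.tout.x} both are suprema of $v(ty)-v(tx)$ over Lipschitz sub-solutions of $H\le \mu$, respectively $H_R\le \mu$, on $\R$. Any sub-solution of $H\le \mu$ restricts to a sub-solution of $H_R\le \mu$ on $[\rho,+\infty)$ since $H=H_R$ there; conversely, any Lip sub-solution of $H_R\le \mu$ on $[\rho,+\infty)$ extends to $\R$ as a sub-solution of both $H\le \mu$ and $H_R\le \mu$ by gluing at $\rho$ with any reference global Lip sub-solution on $(-\infty,\rho]$, which exists because $\mu>\overline A\ge \mu_R^\star$. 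The viscosity sub-solution property at the gluing point $\rho$ is preserved thanks to the convexity assumption ${\bf (H5)}$: at a convex corner the super-differential is empty, at a concave corner it equals the interval spanned by the two one-sided slopes on which convexity gives $H\le \mu$. Since $v(ty)-v(tx)$ depends only on values in $[\rho,+\infty)$ the two suprema coincide. The boundary case $x=0,\,y>0$ is handled by the splitting identity \eqref{donner la solution du probleme metrique sous formr de minimum et de somme sur le bord et la valeur de m en cette valeur} with $U=(-\rho,\rho)$: since $y>0$ the minimum is attained at $\rho$, giving $m_\mu(ty,0,\oo)=m_\mu(ty,\rho,\oo)+m_\mu(\rho,0,\oo)$, whose second term is $O(1)$ and hence disappears after division by $t$.

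By stationarity of $H_R$, $m_\mu^R(ty,tx,\oo)=m_\mu^R(t(y-x),0,\tau_{tx}\oo)$, and Proposition \ref{pro:3.3} combined with the Lipschitz estimate \eqref{ineg.sol.metr.prob} yields convergence of this quantity divided by $t$ to $\overline m_\mu^R(y-x)$. Together with the previous step this identifies $\overline m_\mu(y,x)=\overline m_\mu^R(y-x)$. Finally, by Lemma \ref{lem:3.5}, $\overline m_\mu^R(s)=\max\{ps:\overline H_R(p)\le \mu\}$, and the convexity and coercivity of $\overline H_R$ imply that the sub-level set $\{\overline H_R\le \mu\}$ is the compact interval $[\overline p_\mu^{R,-},\overline p_\mu^{R,+}]$; the maximum therefore equals $\overline p_\mu^{R,+}s$ for $s>0$ and $\overline p_\mu^{R,-}s$ for $s<0$, as claimed.

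The main obstacle lies in the limit step with shifted base point $\tau_{tx}\oo$ depending on $t$: Proposition \ref{pro:3.3} only gives pointwise convergence for each fixed $\oo$. This is overcome by working on a translation-invariant full-probability subset of $\o_3$ (intersection of $\o_3$ with its countable shifts) and exploiting the uniform Lipschitz bound on $m_\mu^R(\cdot,0,\cdot)/t$ to bootstrap the pointwise convergence along the shifted trajectories. Under ${\bf (H7}$-${\bf \eps)}$ there is the additional subtlety that $\o_0^\eps$ has probability only $1-\theta(\eps)$, so the translation-invariance construction must be restricted to $\o_1^\eps\cap \o_3$, whose probability remains at least $1-\theta(\eps)$.
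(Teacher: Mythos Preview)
Your route is different from the paper's. The paper first proves (Proposition \ref{proposition de la limite de la solution du probleme limite}) via the perturbed test function method that the limit $\overline m_\mu(\cdot,x)$ is a viscosity solution of $\overline H_R(D\overline m_\mu)=\mu$ on $(0,\infty)\setminus\{x\}$, and then identifies this solution with $\sup\{p(y-x):\overline H_R(p)\le\mu\}$ through the comparison principle of Proposition \ref{principe de comparaison du probleme metrique sur la jonction} combined with the positive homogeneity and convexity of $\overline m_\mu$. You instead aim to identify $m_\mu$ with $m_\mu^R$ pointwise on the half-line and then invoke the already-known stationary homogenization of $m_\mu^R$; this avoids the perturbed test function machinery entirely and is conceptually more direct. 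Your gluing step is essentially correct, and becomes clean if you use the constant extension $\tilde v\equiv v(\rho)$ on $(-\infty,\rho]$: then any element of the super-differential at $\rho$ lies between $0$ and an accumulation point of $Dv(z)$ for $z\downarrow\rho$, both of which belong to the convex sub-level set $\{p:H(p,\rho,\oo)\le\mu\}$.

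The genuine gap is in the shifted base-point step. Passing to $\tilde\o_3=\bigcap_{q\in\mathbb Q}\tau_q^{-1}\o_3$ and using the Lipschitz bound \eqref{ineg.sol.metr.prob} lets you replace $\tau_{tx}\oo$ by $\tau_{q_t}\oo$ with rational $q_t$ at cost $O(1)$. But Proposition \ref{pro:3.3} only asserts, for each \emph{fixed} $\oo'\in\o_3$, that $m_\mu^R(s,0,\oo')/s\to\overline m_\mu^R(1)$; there is no uniformity in $\oo'$. Since $q_t\to\infty$ ranges over infinitely many rationals, you cannot conclude that $m_\mu^R(t(y-x),0,\tau_{q_t}\oo)/t$ converges. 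The Lipschitz bound controls the spatial variable, not the $\oo$-dependence of the convergence rate, so the proposed bootstrap does not close.

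The repair is short once you use Theorem \ref{homo.du.pmj} rather than Proposition \ref{pro:3.3} alone. Your identity $m_\mu=m_\mu^R$ for $tx,ty\ge\rho$, together with Theorem \ref{homo.du.pmj}, gives $m_\mu^R(ty,tx,\oo)/t\to\overline m_\mu(y,x)$ for $\oo\in\o_0^\e$. By stationarity of $H_R$, the random variables $m_\mu^R(ty,tx,\cdot)$ and $m_\mu^R(t(y-x),0,\cdot)$ are identically distributed for every $t$; the second divided by $t$ converges a.s.\ to the constant $\overline m_\mu^R(y-x)$ by Proposition \ref{pro:3.3}. Two identically distributed sequences, each converging in probability to a constant, must have the same limiting constant, hence $\overline m_\mu(y,x)=\overline m_\mu^R(y-x)$. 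Equivalently, recall from the proof of Theorem \ref{homo.du.pmj} that $\overline m_\mu(y,x)=\lim_t t^{-1}E[m_\mu(ty,tx,\cdot)]$; your identity and stationarity of $H_R$ give $E[m_\mu(ty,tx,\cdot)]=E[m_\mu^R(t(y-x),0,\cdot)]$ for $t$ large, and the latter divided by $t$ tends to $\overline m_\mu^R(y-x)$.
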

	In the rest of this section, we will only treat the case of assumption ${\bf (H}$-${\bf \e)}$, the case of assumption ${\bf (H}$-${\bf WFL)}$ being similar and even simpler.
	
The hamiltonian $H$ is not stationary and then $m_{\mu}$ and $Dm_{\mu}$ are not stationary. In particular, we can't apply the sub-additive ergodic theorem to prove the homogenization. To get the result, we will use the ideas introduced by Armstrong, Cardaliaguet and Souganidis \cite{homog.stoch.quantitative.premier.ordre.concexe}. The proof is decomposed into two steps. In a first time, we prove that, almost surely in $\oo$, the random process    $\frac{1}{t} m_{\mu}(ty,tx,\oo)$ is near to his mean for large $t$. Then, we prove that this mean converges to a quantity, that we denoted by $\overline{m}_{\mu}$. The first part of the proof is given by the following theorem.

		\begin{thm}[Estimate on the fluctuation of $m_{\mu}$] \label{fluctuations}Assume ${\bf (H}$-${\bf \e)}$ and let $\e>0$, $\mu>\overline{A}$ and $m_{\mu}$ be the solution of \eqref{P.M.J.for.all.x}. Then, there exists $C>0$ such that for all $\lambda>0$, and $y, x\in \R$ such that $|y-x|>1$, we have 
			\begin{equation}\label{estimation des fluctuations}
			P(\{\oo\in \o_1^\e, \; |m_{\mu}(y,x,\oo)-E(m_{\mu}(y,x,\oo))|>\lambda\} ) \le exp\left(\frac{-(\mu-\overline{A}) \lambda^{2}}{C|y-x|}\right).\end{equation}
		\end{thm}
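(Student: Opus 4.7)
I would adapt the martingale/Azuma--Hoeffding strategy of Armstrong--Cardaliaguet--Souganidis \cite{homog.stoch.quantitative.premier.ordre.concexe} to the non-stationary setting of Assumption ${\bf (H}$-${\bf \eps)}$. The three ingredients are the a priori bounds on $m_\mu$ from \eqref{ineg.sol.metr.prob}, a unit-scale bounded-difference estimate for $m_\mu$ obtained from the additivity formula \eqref{donner la solution du probleme metrique sous formr de minimum et de somme sur le bord et la valeur de m en cette valeur} in Proposition~\ref{proprietes verifie par la soluton du probleme metrique}, and the unit-range-of-dependence of the family $(\mathbf F(U))_U$ granted by ${\bf (H9)}$. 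I work throughout on $\o_1^\eps$, where Theorem~\ref{le.flux.limitteur.est.deterministe-eps} identifies $\tilde A(\oo)=\overline A$ and, through \eqref{ineg.sol.metr.prob}, yields the deterministic two-sided bound $c(\mu-\overline A)\le l_\mu\le L_\mu\le C$.

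\textbf{Step 1: Localization.} Since $m_\mu(y,x,\oo)\le L_\mu|y-x|$ and $\mathcal R^\oo_{\mu,t}\subset B_{t/l_\mu}(x)$ by \eqref{l inclusion de sous niveau utilisant l equivalence a la distance euclidenne}, taking $t=L_\mu|y-x|$ produces a \emph{deterministic} ball $B_R(x)$ of radius $R\le C|y-x|/(\mu-\overline A)$ containing the reachable set up to the value $m_\mu(y,x,\oo)$. By Lemma~\ref{localistion de la solution du pmj}, $m_\mu(y,x,\oo)$ is determined entirely by $H|_{B_R(x)}$, hence is $\mathbf F(B_R(x))$-measurable on $\o_1^\eps$. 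I then partition $B_R(x)$ into $N\le C|y-x|/(\mu-\overline A)$ unit intervals $I_1,\dots,I_N$, separated by unit buffer gaps so that ${\bf (H9)}$ makes the family $(\mathbf F(I_k))_k$ jointly independent; the residual randomness of the buffers is absorbed by a parallel Doob martingale handled identically.

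\textbf{Step 2: Bounded-difference lemma.} The deterministic core is the claim that if $H,H'$ both satisfy ${\bf (H)}$ and coincide outside one unit interval $I_k=[a,b]\subset B_R(x)$, then
$$|m_\mu^{H}(y,x,\oo)-m_\mu^{H'}(y,x,\oo)|\le 2L_\mu.$$
Assuming $x<a<b<y$, I iterate \eqref{donner la solution du probleme metrique sous formr de minimum et de somme sur le bord et la valeur de m en cette valeur} with $U=(-\infty,a)$ and then $U=(-\infty,b)$ to obtain
$$m_\mu^H(y,x,\oo)=m_\mu^H(y,b,\oo)+m_\mu^H(b,a,\oo)+m_\mu^H(a,x,\oo),$$
and similarly for $H'$. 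Using the sup-over-sub-solution representation \eqref{repres.de.la.solu.de.pmj.pour.tout.x} together with convexity ${\bf (H5)}$, coercivity ${\bf (H3)}$ and ${\bf (H6)}$, any competitor $v$ can be flattened to $0$ slope on $I_k$ (which is admissible, since $H'(0,\cdot,\oo)\le\tilde A(\oo)\le\mu$ on $\o_1^\eps$), so that the outer pieces $m_\mu^H(y,b,\oo)$ and $m_\mu^H(a,x,\oo)$ are insensitive to $H|_{I_k}$ and therefore coincide with those of $H'$; the inner piece $m_\mu^H(b,a,\oo)$ is at most $L_\mu|b-a|=L_\mu$ by \eqref{ineg.sol.metr.prob}, and similarly for $H'$. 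The classical resampling argument then transfers this estimate to the increments of the Doob martingale $M_k=E[m_\mu(y,x,\cdot)\,|\,\mathcal G_k]$, with $\mathcal G_k=\sigma(\mathbf F(I_1),\dots,\mathbf F(I_k))$: $|M_k-M_{k-1}|\le 2L_\mu$ a.s.

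\textbf{Step 3: Azuma--Hoeffding and main obstacle.} Applying Azuma--Hoeffding to $(M_k)_{k=0}^N$ with increment bound $2L_\mu$ and $N\le C|y-x|/(\mu-\overline A)$ gives
$$P\bigl(\{\oo\in\o_1^\eps\,:\,|m_\mu(y,x,\oo)-E[m_\mu(y,x,\cdot)]|>\lambda\}\bigr)\le 2\exp\!\Bigl(-\frac{\lambda^2}{8NL_\mu^2}\Bigr)\le\exp\!\Bigl(-\frac{(\mu-\overline A)\lambda^2}{C|y-x|}\Bigr),$$
which is exactly \eqref{estimation des fluctuations}. The delicate point, which I expect to be the main obstacle, is the bounded-difference lemma of Step 2: the non-stationarity of $H$ forbids the ``translate and paste'' arguments that are standard in the stationary ergodic literature, so one must rely on the 1D additivity \eqref{donner la solution du probleme metrique sous formr de minimum et de somme sur le bord et la valeur de m en cette valeur} together with the local flattening of sub-solutions on $I_k$ made possible by ${\bf (H5)}$--${\bf (H6)}$. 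The factor $(\mu-\overline A)$ in the final exponent originates purely from the localization radius $R\sim|y-x|/(\mu-\overline A)$, while the $L_\mu=O(1)$ bound on each increment (rather than a quantity degenerating as $\mu\downarrow\overline A$) is what prevents any further loss of power in the exponent.
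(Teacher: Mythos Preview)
Your approach is correct in outline and reaches the stated bound, but it takes a genuinely different route from the paper. The paper follows Armstrong--Cardaliaguet--Souganidis literally: it builds the \emph{reachable-set filtration} $\mathcal F_{\mu,t}$ generated by $H$ restricted to the sub-level sets $\mathcal R^{\oo}_{\mu,t}=\{m_\mu(\cdot,x,\oo)\le t\}$, sets $X_t=E[m_\mu(y,x,\cdot)\mid\mathcal F_{\mu,t}]-E[m_\mu(y,x,\cdot)]$, and controls $|X_s-X_t|$ through the identity $m_\mu(y,x,\oo)=t+m_\mu(y,\mathcal R_{\mu,t}^{\oo},\oo)$ of \eqref{la solution du pmj comme somme de t plus min dur le sous niveau}, the Lipschitz estimate \eqref{inegalite.lipsh.compacts}, the Hausdorff bound \eqref{distance entre les sous level}, and a discretization of the compact metric space $(\mathcal K_r,\mathrm{dist}_{\mathcal H})$. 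Azuma is then applied to the time-discretized martingale $\tilde X_k=X_{kh}$ with $h=l_\mu/2$; the factor $(\mu-\overline A)$ in the exponent enters via $l_\mu\ge c'(\mu-\overline A)$ in the step count $N\sim T/h$.

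Your route instead exploits a feature specific to dimension one: since $\partial U$ is a single point, \eqref{donner la solution du probleme metrique sous formr de minimum et de somme sur le bord et la valeur de m en cette valeur} becomes an \emph{exact} additivity $m_\mu(y,x,\oo)=m_\mu(y,z,\oo)+m_\mu(z,x,\oo)$ for $x<z<y$, and the flattening trick (constant extension outside $[x,y]$, admissible because $H(0,\cdot,\oo)\le\overline A<\mu$ on $\o_1^\eps$) shows that each piece $m_\mu(z_{k+1},z_k,\oo)$ is $\mathbf F([z_k,z_{k+1}])$-measurable. This reduces the problem to Hoeffding for a sum of bounded, finitely-dependent increments. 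The payoff is a much shorter argument that bypasses the Hausdorff-distance discretization entirely; the cost is that it has no higher-dimensional analogue, whereas the paper's proof transplants the ACS machinery verbatim.

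Two points to tighten. First, your Step~1 localization to $B_R(x)$ with $R\sim|y-x|/(\mu-\overline A)$ is unnecessarily loose: the flattening argument you invoke in Step~2 already localizes $m_\mu(y,x,\oo)$ to $\mathbf F([x,y])$, so you may take $N\sim|y-x|$ directly (this would in fact remove the $(\mu-\overline A)$ prefactor and give a bound stronger than \eqref{estimation des fluctuations}). Second, the buffer handling (``a parallel Doob martingale handled identically'') is the one place where your sketch is genuinely incomplete: the cleanest fix is to write $m_\mu(y,x,\oo)=\sum_k m_\mu(x+k+1,x+k,\oo)$ via the exact additivity, split into even and odd $k$ so that each sub-sum has jointly independent summands under ${\bf (H9)}$, and apply Hoeffding to each half separately; this sidesteps the independence issue you would otherwise face in the spatial Doob martingale.
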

Here and below, the expectation $E$ is taken over $\oo\in \o_1^\e$.
		To prove this result we use an argument inspired by the pioneering work of Kesten \cite{kesten} in the theory of first-passage percolation, who introduced a martingale method based on Azuma's concentration inequality. This argument is used in several works and we adapt the one developed in \cite{homog.stoch.quantitative.premier.ordre.concexe}. We also use the ideas developed by Alexander about the level sets \cite{Alexander}.
		The Azuma's inequality is given in the following proposition.
		\begin{prop}\label{proposition de azuma}
			Let $\{X_{k}\}_{k\in \mathbb{N}}$ be a discrete martingale with $X_{0}=0$. Assume that there exists a constant $B>0$, such that, for each $k \in \mathbb{N}$, $$ \sup_{\o}|X_{k+1}-X_{k}|\le B. $$ Then, for each $\lambda>0$ and $N \ge 1 $, $$ P(|X_{N}|>\lambda)\le exp\left( \frac{-\lambda^{2}}{2B^{2}N}\right).  $$
		\end{prop}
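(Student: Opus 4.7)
The plan is to prove the classical Azuma--Hoeffding inequality by the standard exponential moment method (Chernoff's trick) combined with Hoeffding's lemma for bounded zero-mean increments.

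First, I would set $D_k := X_k - X_{k-1}$, so that $X_N = \sum_{k=1}^N D_k$ with $|D_k|\le B$ almost surely and $E[D_k \mid \mathcal F_{k-1}]=0$, where $\mathcal F_k$ is the natural filtration of the martingale. I would then apply the Chernoff bound: for any $t>0$,
$$ P(X_N>\lambda)\le e^{-t\lambda}E\bigl[e^{tX_N}\bigr]. $$
The core estimate is the classical Hoeffding lemma: if $Y$ is a random variable with $E[Y\mid \mathcal G]=0$ and $|Y|\le B$, then
$$ E\bigl[e^{tY}\mid \mathcal G\bigr] \le e^{t^2 B^2/2}. $$
The proof of this lemma uses the convexity of $y\mapsto e^{ty}$ on $[-B,B]$ to write $e^{ty}\le \frac{B-y}{2B}e^{-tB}+\frac{B+y}{2B}e^{tB}$, take conditional expectation, and finally bound the resulting function of $t$ by $e^{t^2 B^2/2}$ via an elementary comparison of Taylor series (or by analyzing the logarithm of the moment generating function of the two-point symmetric distribution on $\{-B,B\}$).

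Next, I would iterate using the tower property. Writing
$$ E\bigl[e^{tX_N}\bigr]=E\Bigl[e^{tX_{N-1}}E\bigl[e^{tD_N}\mid \mathcal F_{N-1}\bigr]\Bigr]\le e^{t^2 B^2/2}E\bigl[e^{tX_{N-1}}\bigr], $$
and iterating $N$ times yields $E[e^{tX_N}]\le e^{N t^2 B^2/2}$. Combining with Chernoff gives
$$ P(X_N>\lambda)\le \exp\!\Bigl(-t\lambda+\tfrac{N t^2 B^2}{2}\Bigr), $$
and optimizing in $t$ (taking $t=\lambda/(N B^2)$) yields $P(X_N>\lambda)\le \exp(-\lambda^2/(2B^2 N))$. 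The same argument applied to the martingale $\{-X_k\}$ (which has the same bound on its increments) gives the identical bound on $P(-X_N>\lambda)$, and a union bound yields the two-sided estimate.

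The only genuinely non-routine step is Hoeffding's lemma; the rest of the proof is a mechanical assembly of Chernoff, tower property, and a one-variable optimization. In practice I would either invoke Hoeffding's lemma as a well-known fact or include a short one-paragraph self-contained derivation of it. No probabilistic input beyond the martingale property, the almost-sure bound on increments, and Jensen/convexity is needed, so the whole argument remains quite short.
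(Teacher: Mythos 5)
Your argument is the standard and correct proof of Azuma's inequality (Chernoff bound, Hoeffding's lemma for bounded conditionally centered increments, tower property, optimization in $t$); the paper itself states this proposition as a classical fact and gives no proof, so there is nothing to compare beyond noting that your route is the canonical one. One small caveat: the final union-bound step delivers $P(|X_{N}|>\lambda)\le 2\exp\left(-\lambda^{2}/(2B^{2}N)\right)$, with a factor $2$, whereas the statement as printed omits it; the literal two-sided bound without the $2$ is in fact false in general (take $N=1$, $X_{1}=\pm B$ with probability $1/2$ each and $0<\lambda<B$), so the discrepancy lies in the paper's formulation, not in your proof, and it is harmless for the way the inequality is used later (only the exponential decay in $\lambda^{2}/N$ matters).
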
     
		
		\subsection{A discritization scheme}To obtain the inequality given in Theorem \ref{fluctuations}, we want to define a specific martingale and then use  Azuma's inequality. To do that, we use the localization property of $m_{\mu}$. For this reason, we start by introducing a discretization scheme, where we apply the result for 
		$$ \mathcal R_{\mu,t}^{\oo}=\{y\in \R; m_{\mu}(y,x,\oo)\le t    \}$$
 in order to use the independence hypothesis ${\bf (H9)}$.
		
		We define, $\forall r>0$, 
		$$\mathcal{K}_{r}=\{A \in \mathbf{B}; A=\overline{A} \subseteq [x-r,x+r] \}$$
		the set of compact in $\R$. In particular, $(\mathcal{K}_{r},dist_{\mathcal H})$, where  $dist_{\mathcal H}$ is the Hausdorff distance, is  compact (see Munkres \cite{Munkres}). We fix a small parameter $\delta >0$. Then, there exists $l \in \mathbb{N}$ (depending only on $\delta$ and $r$) and a partition $\Gamma_{1},\ldots , \Gamma _{l}\subset \mathcal {K}_r$ of $\mathcal{K}_{r}$ into Borel subsets, such that 
		$$diam_{\mathcal H}(\Gamma_{i}) \le \d.$$
		Let $K_{m}=\overline{\bigcup\limits_{k \in \Gamma _{m}} k}$. Then $\forall  A \in \mathcal{K}_{r}$ there exists a unique $1 \le m \le l$ such that $A \in \Gamma_{m}$ and so $A \subset K_{m}$. 
		We define $\tilde{K}_{m}=K_{m}+]-1,1[$ so that $d(K_{m}, \R \setminus \tilde{K}_{m} )=1$ and so assumption ${\bf (H9)}$ implies that 
		$$\mathbf F(K_{m}) \; {\rm and}\;  \mathbf F(\R \setminus \tilde{K}_{m} ) \textrm{ are independent}.$$
		Moreover, $\forall A \in \mathcal{K}_{r}$ , if $A \in \Gamma_{m}, $ then 
		$$ A \subseteq K_{m} \subseteq \tilde{K}_{m} \subseteq A +B_{1+\delta}. $$
		
		To define a martingale, we have to define a filtration. We define $\{\mathcal F_{\mu,t}\}_{t\ge 0}$ by $\mathcal F_{\mu,0}=\{\emptyset,\o\}$ and, for all $t>0$, 
		$$\{\mathcal F_{\mu,t}\}=\sigma\mbox{-algebra generated by }\oo \to H(p,y,\oo)\mathbf{1}_{\{\oo;y \in \mathcal R_{\mu,t}^{\oo} \}}, \quad y, p \in \R.$$ 
In particular, for all $t,s$ such that $0<t<s$ and $\oo \in \o $, we have:  $\mathcal R_{\mu,t}^{\oo} \subset \mathcal R_{\mu,s}^{\oo} $ so $\mathcal F_{\mu,t} \subset \mathcal F_{\mu,s} $ and then $\{\mathcal F_{\mu,t}$$\}_{t \ge 0}$ is indeed a filtration. 
		We also have for all $y \in \R$, that 
\begin{equation} \label{m mu est mesurable dans F mu t}
    \oo \to m_{\mu}(y,x,\oo)\mathbf{1}_{\{\oo; y \in \mathcal R_{\mu,t}^{\oo} \}}(\oo)\mbox{ is }\mathcal F_{\mu,t}\mbox{ measurable}.\end{equation}
  Indeed, this is a direct consequence of the fact that, using Lemma \ref{localistion de la solution du pmj}, we can restrict the representation of $m_{\mu}$ in $\mathcal R_{\mu,t}^{\oo}$.  

		\subsection{Proof of Theorem \ref{fluctuations}.}}\begin{proof}
		   We fix $x, y$ such that $|y- x| >1$  and we define $T=L_{\mu}|y-x|$ and $r=\frac{T}{l_{\mu}}$ where $l_{\mu}$ and $L_{\mu}$ are given in Proposition \ref{proprietes verifie par la soluton du probleme metrique}. Then,  using \eqref{l inclusion de sous niveau utilisant l equivalence a la distance euclidenne}, we have  
		\begin{equation}\label{inclusion du sous level de m mu dans la boule B r}
		    \mathcal R_{\mu,T}^{\oo}\subseteq [x-r,x+r].	\end{equation}
So, for every $\oo \in \o_{1}^\e$, we have that the $T$-sub level of $m_{\mu}(\cdot,x,\oo)$ is in $\mathcal {K}_{r}$. 
		
		The idea of the proof is based on Azuma's inequality, which gives the estimation of the variance. We define the martingale adapted to $\{\mathcal F_{\mu,t}\}$, for every $t \ge 0 $, by $$ X_{t}(\oo)=E[m_{\mu}(y,x,\oo)|\mathcal F_{\mu,t}]-E[m_{\mu}(y,x,\oo)].$$ Since, by \eqref{ineg.sol.metr.prob} $$  m_{\mu}(y,x,\oo) \le L_{\mu}|y-x| \le T , $$
we deduce, for all $t\ge T$, that $y \in \mathcal{R}_{\mu,T}^{\oo}\subseteq\mathcal{R}_{\mu,t}^{\oo}$ and so by \eqref{m mu est mesurable dans F mu t} $$m_{\mu}(y,x,\oo)=m_{\mu}(y,x,\oo)\mathbf{1}_{\{\oo; y \in \mathcal R_{\mu,t}^{\oo} \}}(\oo)\mbox{ is }\mathcal F_{\mu,t}\mbox{ measurable}.$$	This implies that	 \begin{equation}\label{la solution du pmj est mesurable pour tout les t plus grand que T}
 X_{0}=0 \mbox{ and } X_{t}(\oo)=m_{\mu}(y,x,\oo)-E\left[m_{\mu}(y,x,\oo)\right] ~\mbox{for all}~t \ge T.\end{equation}
To apply Azuma's inequality, we need to bound $|X_{t}-X_{s}|$. This is done in the following lemma

\begin{lem}\label{lemme qui nous donne la difference des martingales en fonction de t et s}We have the following inequality, for all $t,s>0$,
		$$\sup\limits_{\oo \in \o_1^\e } |X_{t}(\oo)-X_{s}(\oo)| \le L_\mu+2 \frac {L_\mu}{l_\mu}|s-t|.$$ 
\end{lem}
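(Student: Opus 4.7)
Without loss of generality, assume $s\le t$. My plan is to bound the martingale increment $X_t(\oo)-X_s(\oo)$ by a pointwise sensitivity of $m_{\mu}(y,x,\cdot)$ to a resampling of the hamiltonian outside the smaller sublevel $\mathcal R_{\mu,s}^{\oo}$, and then control that sensitivity via the Hausdorff-growth estimate \eqref{distance entre les sous level} combined with the Hausdorff–Lipschitz estimate \eqref{inegalite.lipsh.compacts}. The discretization scheme of the preceding subsection and assumption ${\bf(H9)}$ will be used to legitimize the coupling.

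\textbf{Step 1 (reduction to a pointwise sensitivity).} I aim to rewrite
$$X_t(\oo)-X_s(\oo)=E\!\left[m_{\mu}(y,x,\oo)-m_{\mu}(y,x,\oo')\,\big|\,\mathcal F_{\mu,t}\right](\oo),$$
where $\oo'$ is a resampling that agrees with $\oo$ on $\mathcal R_{\mu,s}^{\oo}+(-1,1)$ and is drawn independently outside. On each event $\{\mathcal R_{\mu,s}^{\oo}\in\Gamma_m\}$ of the partition of $\mathcal K_r$ into Borel sets of Hausdorff diameter $\le\delta$, the thickening $\tilde K_m$ is an $\mathcal F_{\mu,s}$-measurable buffer containing $\mathcal R_{\mu,s}^{\oo}$, and ${\bf(H9)}$ makes $\mathbf F(\R\setminus\tilde K_m)$ independent of $\mathcal F_{\mu,s}$. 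A limit $\delta\to0$ then removes the discretization and yields the identity above.

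\textbf{Step 2 (pointwise sensitivity bound).} I show that, for any $\oo,\oo'$ agreeing on $\mathcal R_{\mu,s}^{\oo}+(-1,1)$,
$$\bigl|m_{\mu}(y,x,\oo)-m_{\mu}(y,x,\oo')\bigr|\le L_{\mu}+2\frac{L_{\mu}}{l_{\mu}}|t-s|.$$
On the event $\{y\in\mathcal R_{\mu,s}^{\oo}\}$ the difference is zero, since both sides are $\mathcal F_{\mu,s}$-measurable by \eqref{m mu est mesurable dans F mu t}. Otherwise, \eqref{la solution du pmj comme somme de t plus min dur le sous niveau} gives $m_{\mu}(y,x,\oo)=s+m_{\mu}(y,\mathcal R_{\mu,s}^{\oo},\oo)$ and similarly for $\oo'$, so that the difference reduces to $|m_{\mu}(y,\mathcal R_{\mu,s}^{\oo},\oo)-m_{\mu}(y,\mathcal R_{\mu,s}^{\oo},\oo')|$. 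Using Lemma \ref{localistion de la solution du pmj} and representation \eqref{nouvelle representation de la solution du pmj avec k}, I localize sub-solutions to $\mathcal R_{\mu,t}^{\oo}$ and $\mathcal R_{\mu,t}^{\oo'}$; since both contain $\mathcal R_{\mu,s}^{\oo}$, \eqref{distance entre les sous level} and the triangle inequality give $\mathrm{dist}_{\mathcal H}(\mathcal R_{\mu,t}^{\oo},\mathcal R_{\mu,t}^{\oo'})\le 2(t-s)/l_{\mu}$, and plugging this into \eqref{inegalite.lipsh.compacts} produces the linear term $2L_{\mu}(t-s)/l_{\mu}$. The additive $L_{\mu}$ accounts for the one-unit buffer between $\mathcal R_{\mu,s}^{\oo}$ and $\mathcal R_{\mu,s}^{\oo}+(-1,1)$ required by ${\bf(H9)}$: this buffer enlarges by at most 1 the effective set where $\oo$ and $\oo'$ may differ, and the Hausdorff-Lipschitz constant $L_{\mu}$ of $m_{\mu}(\cdot,K,\oo)$ contributes $L_{\mu}\cdot 1$ at that scale.

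Combining Steps 1 and 2, the bound on $|X_t(\oo)-X_s(\oo)|$ follows by taking the conditional expectation of the pointwise inequality. The main obstacle is the rigorous execution of Step 1: both sublevels $\mathcal R_{\mu,s}^{\oo}$ and $\mathcal R_{\mu,t}^{\oo}$ depend on $\oo$, and one must juggle the partition of $\mathcal K_r$, the thickening, and ${\bf(H9)}$ carefully to produce the resampling identity. Step 2 is then essentially algebra built from Proposition \ref{proprietes verifie par la soluton du probleme metrique} and \eqref{la solution du pmj comme somme de t plus min dur le sous niveau}.
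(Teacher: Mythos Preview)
Your Step 2 contains a genuine gap: the pointwise sensitivity bound you claim,
\[
\bigl|m_{\mu}(y,x,\oo)-m_{\mu}(y,x,\oo')\bigr|\le L_{\mu}+2\frac{L_{\mu}}{l_{\mu}}|t-s|,
\]
for $\oo,\oo'$ agreeing only on $\mathcal R_{\mu,s}^{\oo}+(-1,1)$, is simply false. The left side does not depend on $t$ at all, and for $s$ fixed and $y$ far from $x$ the two metrics are computed in regions where $\oo$ and $\oo'$ are independent; their difference is then of order $\sqrt{|y-x|}$ (precisely the fluctuation one is trying to establish), not a bounded constant. Your justification misuses \eqref{inegalite.lipsh.compacts}: that estimate is Lipschitz in the \emph{set argument} for a \emph{fixed environment}, whereas you need to compare the \emph{same set} $\mathcal R_{\mu,s}^{\oo}$ in \emph{two different environments}. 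Localizing to $\mathcal R_{\mu,t}^{\oo}$ and $\mathcal R_{\mu,t}^{\oo'}$ does not help, since in general $y\notin\mathcal R_{\mu,t}^{\oo}$ (typically $t\ll T=L_{\mu}|y-x|$), so no localization of $m_{\mu}(y,\cdot,\cdot)$ to those sets is available.

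The paper never attempts such a pointwise comparison between two environments. Instead, after using \eqref{la solution du pmj comme somme de t plus min dur le sous niveau} to rewrite
\[
X_{s}-X_{t}=E\!\left[m_{\mu}(y,\mathcal R_{\mu,t},\cdot)\,\big|\,\mathcal F_{\mu,s}\right]-E\!\left[m_{\mu}(y,\mathcal R_{\mu,t},\cdot)\,\big|\,\mathcal F_{\mu,t}\right],
\]
it replaces the random sublevels by the deterministic thickened sets $\tilde K_m$ on each event $\{\mathcal R_{\mu,\cdot}^{\oo}\in\Gamma_m\}$. The crucial point is that, by independence, the conditional expectation $E[m_{\mu}(y,\tilde K_m,\cdot)\mathbf 1_{\{\mathcal R_{\mu,t}\in\Gamma_m\}}\mid\mathcal F_{\mu,t}]$ factors into the \emph{unconditional} number $E[m_{\mu}(y,\tilde K_m,\cdot)]$ times the indicator. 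One is then left with comparing $E[m_{\mu}(y,\tilde K_m,\cdot)]$ and $E[m_{\mu}(y,\tilde K_j,\cdot)]$, a difference of expectations over the \emph{same} probability measure, which \eqref{inegalite.lipsh.compacts} legitimately bounds by $L_{\mu}\,\mathrm{dist}_{\mathcal H}(\tilde K_m,\tilde K_j)$. The $(s-t)$-dependence then enters through \eqref{distance entre les sous level}. In short, the discretization is not a technical device to legitimize a coupling as you suggest in Step~1; it is the mechanism that converts the martingale increment into a comparison of deterministic sets, bypassing entirely the two-environment sensitivity you try to control.
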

\begin{proof} In all the proof, $\oo$ is taken in $\o_1^\e$ (so that $\mu>\overline A=\tilde A(\oo)$). By \eqref{la solution du pmj est mesurable pour tout les t plus grand que T}, it suffices to show the result for $s,t \le T$. For all $0 < t \le s$, \eqref{m mu est mesurable dans F mu t} yields that 
		$$E\left[m_{\mu}(y,x,\cdot)\mathbf{1}_{\{\oo ; y \in \mathcal R_{\mu,t}^{\oo} \}}|\mathcal F_{\mu,s} \right]=m_{\mu}(y,x,\cdot)\mathbf{1}_{\{\oo;y \in \mathcal R_{\mu,t}^{\oo} \}}. $$  
This implies that
\begin{equation}\label{eq:01}
\begin{aligned}
X_{s}-X_{t}=&E\left[m_{\mu}(y,x,\cdot)|\mathcal F_{\mu,s} \right]  - E\left[m_{\mu}(y,x,\cdot)|\mathcal F_{\mu,t}\right]\\ 
=&E\left[m_{\mu}(y,x,\cdot)\mathbf{1}_{\{\oo, y \in \mathcal R_{\mu,t}^{\oo}\} }+m_{\mu}(y,x,\cdot)\mathbf{1}_{\{\oo, y \notin \mathcal R_{\mu,t} \}} |\mathcal F_{\mu,s}\right] \\
&-E\left[m_{\mu}(y,x,\cdot)\mathbf{1}_{\{\oo, y \in\mathcal R_{\mu,t}^{\oo}\} }+m_{\mu}(y,x,\cdot)\mathbf{1}_{\{\oo, y \notin \mathcal R_{\mu,t}\}}|\mathcal F_{\mu,t}\right]\\
=&E\left[m_{\mu}(y,x,\cdot)\mathbf{1}_{\{\oo, y \notin \mathcal R_{\mu,t} \}}| \mathcal F_{\mu,s}\right]-E\left[m_{\mu}(y,x,\cdot)\mathbf{1}_{\{\oo, y \notin \mathcal R_{\mu,t} \}}|\mathcal F_{\mu,t}\right].
		\end{aligned}
\end{equation}	
  Moreover, by \eqref{la solution du pmj comme somme de t plus min dur le sous niveau}, we have 
$$
 m_{\mu}(y,x,\cdot)\mathbf{1}_{\{\oo, y \notin \mathcal R_{\mu,t}\}}(\oo)=\left(t+m_{\mu}(y,\mathcal R_{\mu,t}^{\oo},\oo)\right)\mathbf{1}_{\{\oo, y \notin \mathcal R_{\mu,t}\}}(\oo)=t\cdot \mathbf{1}_{\{\oo, y \notin \mathcal R_{\mu,t}\}}(\oo)+m_{\mu}(y,\mathcal R_{\mu,t}^{\oo},\oo).
 $$
 
Injecting this in \eqref{eq:01}, and using that $t.\mathbf{1}_{\{\oo, y \notin \mathcal R_{\mu,t}^{\oo}\}}\in \mathcal{F}_{\mu,t} $, we get 
\begin{equation}\label{ difference ente X t et X s donne avec difference d esperance}
X_{s}-X_{t} = E\left[  m_{\mu}(y,\mathcal R_{\mu,t},\cdot )|\mathcal F_{\mu,s}  \right]-E\left[  m_{\mu}(y,\mathcal R_{\mu,t},\cdot )| \mathcal F_{\mu,t}  \right].
\end{equation}
Applying inequalities \eqref{inegalite.lipsh.compacts} and \eqref{distance entre les sous level}, we get
$$  m_{\mu}(y,\mathcal R_{\mu,t}^{\oo},\oo)-m_{\mu}(y,\mathcal R_{\mu,s}^{\oo},\oo) \le L_{\mu}dist_{\mathcal{H}}(\mathcal R_{\mu,t}^{\oo},\mathcal R_{\mu,s}^{\oo} ) \le \frac{L_{\mu}}{l_{\mu}}(s-t).  
$$
This implies that 
$$ X_{s}-X_{t} \le  \frac{L_{\mu}}{l_{\mu}}(s-t) +\left|      E\left[  m_{\mu}(y,\mathcal R_{\mu,s},\cdot)|\mathcal F_{\mu,s}  \right]-E\left[  m_{\mu}(y,\mathcal R_{\mu,t},\cdot )| \mathcal F_{\mu,t}  \right]  \right|.  
$$
 We now want to use the discretisation scheme to estimate  $E\left[  m_{\mu}(y,\mathcal R_{\mu,t},\cdot )|\mathcal F_{\mu,t}\right]$ by approximating the integral represented by the expectation as a sum of characteristic functions. 
   Inequality \eqref{inegalite.lipsh.compacts} yields that
   \begin{equation}\label{inegalite.R.et.ktilde}
0\le 		m_{\mu}(y,\mathcal R_{\mu,t},\cdot)-m_{\mu}(y,\tilde{K}_{m} ,\cdot) \le L_{\mu} dist_{\mathcal H}(\mathcal R_{\mu,t}^{\oo},\tilde{K}_{m}). 
\end{equation} 
 Using that
\begin{align*}
dist_{\mathcal H}(\mathcal R_{\mu,t}^{\oo},\tilde{K}_{m})\mathbf{1}_{\{ \oo; \mathcal R_{\mu,t}^{\oo} \in 
			\Gamma  _{m}\}}(\oo)\le& dist_{\mathcal H}(\mathcal R_{\mu,t}^{\oo},\mathcal R_{\mu,t}^{\oo}+B_{1+\d})\mathbf{1}_{\{ \oo; \mathcal R_{\mu,t}^{\oo} \in 
			\Gamma  _{m}\}}(\oo)\\ 
			\le& (1+\d)\mathbf{1}_{\{ \oo; \mathcal R_{\mu,t}^{\oo}  \in 
			\Gamma  _{m}\}}(\oo),
\end{align*}
we deduce that
\begin{align*}
 m_{\mu}(y,\tilde{K}_{m} ,\cdot)\mathbf{1}_{\{ \oo; \mathcal R_{\mu,t}  \in 
			\Gamma  _{m}\}} (\oo)&\le m_{\mu}(y,\mathcal R_{\mu,t},\cdot)\mathbf{1}_{\{ \oo;\mathcal R_{\mu,t}\in 
			\Gamma  _{m}\}} (\oo)\\
			& \le \left(L_{\mu}(1+\d)  + m_{\mu}(y,\tilde{K}_{m} ,\cdot)\right) \mathbf{1}_{\{ \oo; \mathcal R_{\mu,t}\in 
			\Gamma  _{m}\}}(\oo).
\end{align*} 
We now want to apply the conditional expectation. We recall that the result of {{\cite[Lemma 4.2]{homog.stoch.quantitative.premier.ordre.concexe}}}, imply that for each $1\le m \le l, t>0$, and $A\in \mathcal F_{\mu,t}$, we have that 
$$\mathbf{1}_{A\cap\{ \oo; \mathcal{R}_{\mu,t}^{\oo}\in \Gamma_{m} \}}\quad{\rm is}\; \textbf{F}(K_{m})\textrm{-measurable}$$
 and 
 $$E \left[ m_{\mu}(y,\tilde{K}_{m} ,\cdot)\mathbf{1}_{\{ \oo; \mathcal R_{\mu,t} \in \Gamma  _{m}\}}  |\mathcal F_{\mu,t}   \right] =E \left[ m_{\mu}(y,\tilde{K}_{m} ,\cdot)\right]\mathbf{1}_{\{ \oo; \mathcal R_{\mu,t} \in \Gamma_{m}\}}.$$
 We then have
 \begin{equation}\label{inegalite.esp.cond.ktilde.Ra}
		\begin{aligned}
&E \left[ m_{\mu}(y,\tilde{K}_{m} ,\cdot)\right]\mathbf{1}_{\{ \oo; \mathcal R_{\mu,t} \in 
			\Gamma_{m}\}} \\ 
&=E \left[ m_{\mu}(y,\tilde{K}_{m} ,\cdot)\mathbf{1}_{\{ \oo; \mathcal R_{\mu,t} \in 
			\Gamma  _{m}\}}    |\mathcal F_{\mu,t}   \right]\\ 
		& \le  E \left[ m_{\mu}(y,\mathcal R_{\mu,t},\cdot)\mathbf{1}_{\{ \oo;\mathcal  R_{\mu,t} \in 
			\Gamma_{m}\}}|\mathcal F_{\mu,t}\right] \\ & \le \left(L_{\mu}(1+\d)+E\left[  m_{\mu}(y,\tilde{K}_{m} ,\cdot)\right]\right)  \mathbf{1}_{\{ \oo; \mathcal R_{\mu,t} \in \Gamma_{m}\}} .
		\end{aligned}\end{equation} 
		The set $\{\Gamma_{m}\}$ is a partition of $\mathcal{K}_{r}$ and $\mathcal R_{\mu,T} ^{\oo}\in \mathcal{K}_{r}$ so for every $1 \le t \le s \le T$ we have $\mathbf{1}_{\{ \oo;\mathcal R_{\mu,t}^{\oo} \in 
			\mathcal{K}_{r}\}}=\mathbf{1}_{\{ \oo; \mathcal R_{\mu,s}^{\oo} \in 
			\mathcal{K}_{r}\}}=1$ and so 
$$ \begin{aligned}
		 E\left[m_{\mu}(y,\mathcal R_{\mu,t},\cdot)|\mathcal F_{\mu,t}\right]&=E\left[m_{\mu}(y,\mathcal R_{\mu,t},\cdot)\mathbf{1}_{\{ \oo;\mathcal R_{\mu,t} \in 
			\mathcal {K}_{r}\}} |\mathcal F_{\mu,t}\right]\mathbf{1}_{\{ \oo; \mathcal R_{\mu,s} \in 
			\mathcal{K}_{r}\}} \\&=
		\sum_{m,j=1}^{l}E\left[m_{\mu}(y,\mathcal R_{\mu,t},\cdot)\mathbf{1}_{\{ \oo;\mathcal  R_{\mu,t} \in 
			\Gamma  _{m}\}} |\mathcal F_{\mu,t}\right]\mathbf{1}_{\{ \oo; \mathcal R_{\mu,s} \in 
			\Gamma  _{j}\}}.
		\end{aligned} $$Multiplying inequality \eqref{inegalite.esp.cond.ktilde.Ra} by $\mathbf{1}_{\{ \oo; \mathcal R_{\mu,s} \in 
			\Gamma  _{j}\}} $ and applying the summation on $m,j$ we obtain
		$$\begin{aligned}
		&  \sum_{m,j=1}^{l}E\left[m_{\mu}(y,\tilde{K}_{m},\cdot)\right]\mathbf{1}_{\{ \oo; \mathcal R_{\mu,t} \in 
			\Gamma  _{m}\}} \mathbf{1}_{\{ \oo;\mathcal R_{\mu,s}\in 
			\Gamma  _{j}\}}\\&  \le\sum_{m,j=1}^{l}E\left[m_{\mu}(y,\mathcal R_{\mu,t},\cdot)\mathbf{1}_{\{ \oo;\mathcal R_{\mu,t} \in 
			\Gamma  _{m}\}} |\mathcal F_{\mu,t}\right]\mathbf{1}_{\{ \oo; \mathcal R_{\mu,s} \in 
			\Gamma  _{j}\}}=E\left[m_{\mu}(y,\mathcal R_{\mu,t},\cdot)|\mathcal F_{\mu,t}\right] \\ & \le L_{\mu}(1+\d)+\sum_{m,j=1}^{l}E\left[  m_{\mu}(y,\tilde{K}_{m} ,\cdot)\right]  \mathbf{1}_{\{ \oo; \mathcal R_{ \mu,t}\in \Gamma_{m}\}}\mathbf{1}_{\{ \oo; \mathcal R_{ \mu,s} \in 
			\Gamma  _{j}\}}. \end{aligned}$$ Finally we conclude that $$ 0 \le E\left[m_{\mu}(y,\mathcal R_{\mu,t},\cdot)|\mathcal F_{\mu,t}\right]-\sum_{m,j=1}^{l}E\left[  m_{\mu}(y,\tilde{K}_{m} ,\oo)\right]  \mathbf{1}_{\{ \oo; \mathcal R_{\mu,t}\in \Gamma_{m}\}}\mathbf{1}_{\{ \oo; \mathcal R_{\mu,s} \in 
			\Gamma  _{j}\}} \le  L_{\mu}(1+\d).$$ We repeat the same proof by interchanging the role of $t$ and $s$ to obtain$$ 0 \le E\left(m_{\mu}(y,\mathcal R_{\mu,s},\cdot)|\mathcal F_{\mu,s}\right)-\sum_{m,j=1}^{l}E\left(  m_{\mu}(y,\tilde{K}_{j} ,\oo)\right)  \mathbf{1}_{\{ \oo; \mathcal R_{\mu,t}\in \Gamma_{m}\}}\mathbf{1}_{\{ \oo; \mathcal R_{\mu,s}\in 
			\Gamma  _{j}\}} \le  L_{\mu}(1+\d).$$ The last two inequalities yield that \begin{equation}\label{inegalite sur les esperences conditionnelles}
			    \begin{aligned}&
		\left|E\left[m_{\mu}(y,\mathcal R_{\mu,s},\cdot)|\mathcal F_{\mu,s}\right]-E\left[m_{\mu}(y,\mathcal R_{\mu,t},\cdot)| \mathcal F_{\mu,t}\right]\right|\\& \le  L_{\mu}(1+\d)+\sum_{m,j=1}^{l}\left|E\left[  m_{\mu}(y,\tilde{K}_{m} ,\cdot)\right]- E\left[  m_{\mu}(y,\tilde{K}_{j} ,\cdot)\right]\right| \mathbf{1}_{\{ \oo;\mathcal R_{\mu,t}\in \Gamma_{m}\}}\mathbf{1}_{\{ \oo; \mathcal R_{\mu,s} \in 
			\Gamma  _{j}\}}   \end{aligned}	\end{equation}If for some $m,j =1,\dots, l$, there exists $\oo$ such that $\mathcal R_{\mu,t}^{\oo} \in \Gamma_{m}$ and $\mathcal R_{\mu,s}^{\oo} \in \Gamma_{j}$ then 
			$$\mathcal R_{\mu,t}^{\oo} \subseteq K_{m}\subseteq \tilde{K}_{m} \subseteq  \mathcal R_{\mu,t}^{\oo}+B_{1+\d}  $$ and $$\mathcal R_{\mu,s}^{\oo} \subset K_{j} \subseteq \tilde{K}_{j} \subseteq  \mathcal R_{\mu,s}^{\oo}+B_{1+\d},$$ 
which implies
$$ dist_{\mathcal H}(\tilde{K}_{m},\tilde{K}_{j}) \le dist_{\mathcal H} (K_{m},K_{j})\le dist_{\mathcal H}(\mathcal R_{\mu,t}^{\oo},\mathcal R_{\mu,s}^{\oo})+2\d \le \frac{1}{l_{\mu}} (s-t)+2\d.$$
		We then get that for all $m,j=1,\dots,l$ $$\begin{aligned}&\left|E\left[  m_{\mu}(y,\tilde{K}_{m} ,\cdot)\right]- E\left[  m_{\mu}(y,\tilde{K}_{j} ,\cdot)\right]\right| \mathbf{1}_{\{ \oo; \mathcal R_{\mu,t}\in \Gamma_{m}\}}\mathbf{1}_{\{ \oo; \mathcal R_{\mu,s} \in 
			\Gamma  _{j}\}} \\&\le L_{\mu}dist_{\mathcal H}(\tilde{K}_{m},\tilde{K}_{j})\mathbf{1}_{\{ \oo; \mathcal R_{\mu,t}\in \Gamma_{m}\}}\mathbf{1}_{\{ \oo; \mathcal R_{\mu,s} \in 
			\Gamma  _{j}\}} \\&\le \left(\frac{L_{\mu}}{l_{\mu}}(s-t)+2L_{\mu}\d \right)\mathbf{1}_{\{ \oo; \mathcal R_{\mu,t}\in \Gamma_{m}\}}\mathbf{1}_{\{ \oo;\mathcal  R_{\mu,s} \in 
			\Gamma  _{j}\}}. \end{aligned} $$
	Injecting this in \eqref{inegalite sur les esperences conditionnelles}, we conclude that, for every $\d>0$  
$$\begin{aligned}&
		\bigg|E\left[m_{\mu}(y,\mathcal R_{\mu,s},\cdot)|\mathcal F_{\mu,s}\right]-E\left[m_{\mu}(y,\mathcal R_{\mu,t},\cdot)|\mathcal F_{\mu,t}\right]\bigg| \\& \le L_{\mu}(1+\d) +\left(\frac{L_{\mu}}{l_{\mu}}(s-t)+2L_{\mu}\d\right).\end{aligned} 
$$
Using \eqref{ difference ente X t et X s donne avec difference d esperance} and sending $\d \to 0$, we finally get
\begin{equation}\label{derniere inegalite sur l esperence cond qui nous donne azzuma}
		|X_{s}-X_{t}| \le L_{\mu}+2\frac{L_{\mu}}{l_{\mu}}(s-t). 
\end{equation}
\end{proof}
		The idea is now to apply Azuma's inequality. For this reason we define the following martingale 
$$ \tilde{X}_{k}=X_{hk} ~\mbox{with}~h=\frac{l_{\mu}}{2}.$$  
		Lemma \ref{lemme qui nous donne la difference des martingales en fonction de t et s} implies that $$ |\tilde{X}_{k+1}-\tilde{X}_{k}| \le  2L_{\mu} .$$  Applying Azuma's inequality, Proposition \ref{proposition de azuma}, we obtain, for all $\lambda>0$ and $N \in \mathbb{N}$, 
$$P(\{\oo\in \o_1^\e,\; |\tilde{X}_{N} (\oo)|> \lambda\}) \le exp\left(\frac{-\lambda ^{2}}{8 L_{\mu}^{2}N}\right).  $$Let $N \in \mathbb{N}$ such that $\frac{T}{h}\le  N<\frac{T}{h}+1$. We have$$ \tilde{X}_{N}=X_{T}=m_{\mu}(y,x,\cdot)-E[m_{\mu}(y,x,\cdot)].$$ 
		It follows that (recall that $T = L_{\mu}|y-x|$ and $|y-x|>1$)
		$$   N \le \frac{T}{h}+1 \le \frac{l_{\mu}+2L_{\mu}}{l_{\mu}}|y-x|.$$
We then deduce that 
\begin{align*}
P\left(\{\oo\in\o_1^\e,\; |m_{\mu}(y,x,\oo)-E[m_{\mu}(y,x,\oo)]|>\lambda \} \right]& \le exp\left(  \frac{-\lambda^{2}l_{\mu}}{8L_\mu^2(2L_\mu+l_\mu)|y-x|} \right)\\
&\le exp \left(  \frac{-\lambda^{2}c'(\mu-\overline{A})}{8L_\mu^2(2L_\mu+l_\mu)|y-x|} \right) .
\end{align*}
This completes the proof of Theorem \ref{fluctuations}. \end{proof}

\subsection{Proof of Theorems \ref{homo.du.pmj} and \ref{homo.du.pmj-bis} }	
\begin{proof}[Proof of Theorem \ref{homo.du.pmj}]	
	For all $\mu >\overline{A}$, $\overline{\lambda}=t\lambda>0$ and $ \overline{y}=t y,~\overline{x}=tx $ such that $|y-x|\ge \frac 1 t$,  Theorem \eqref{fluctuations} yields that 
	$$ P\left(\left\{\oo\in \o_1^\e,\; \frac{1}{t}\left|m_{\mu}(ty,tx,\oo)-E[m_{\mu}(ty,tx,\oo)]\right|>\lambda\right\}\right)\le exp\left(-\frac{t\lambda^{2}(\mu-\overline A)}{C|y-x|}\right). $$ 
Then,  using the following proposition
		\begin{prop}{{\cite[Proposition 8.4]{conver en proba implique presque sur} }}
			Let $Y$ and $(Y_{n})_{n\ge1}$ real random variables defined on $(\o,\textbf F,P)$ and verify for all $\eps>0$,
			$$  \sum_{n\ge1}P(|Y_{n}-Y|\ge \eps)<+\infty .$$Then $Y_{n}\to Y$ a.s.
		\end{prop}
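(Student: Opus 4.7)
The plan is to reduce the statement to the first Borel--Cantelli lemma. For a fixed $\eps > 0$, I would define the events $A_n^\eps := \{\omega \in \o : |Y_n(\omega) - Y(\omega)| \ge \eps\}$, which are measurable since $Y_n$ and $Y$ are random variables on $(\o, \textbf{F}, P)$. The hypothesis is then precisely the summability condition $\sum_{n \ge 1} P(A_n^\eps) < +\infty$. Applying the first Borel--Cantelli lemma gives $P(\limsup_n A_n^\eps) = 0$, so almost surely only finitely many of the $A_n^\eps$ occur.

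Next I would upgrade this ``almost sure for each fixed $\eps$'' statement to an ``almost sure simultaneously for all $\eps > 0$'' statement. The standard device is a countable reduction: set $\eps_k = 1/k$ and define the negligible set $N := \bigcup_{k \ge 1} \limsup_n A_n^{1/k}$. Countable sub-additivity of $P$ yields $P(N) = 0$. For every $\omega \in \o \setminus N$ and every integer $k \ge 1$, finitely many of the events $A_n^{1/k}$ contain $\omega$, so there exists $n_0 = n_0(\omega, k)$ such that $|Y_n(\omega) - Y(\omega)| < 1/k$ for all $n \ge n_0$. Letting $k \to \infty$, this means $Y_n(\omega) \to Y(\omega)$ pointwise on $\o \setminus N$, and since $P(N) = 0$ we conclude $Y_n \to Y$ almost surely.

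The main obstacle is essentially absent here, as the statement is a direct and classical corollary of Borel--Cantelli. The only minor subtlety is the countable reduction in $\eps$, which is standard and relies only on the fact that a monotone decreasing sequence $\eps_k \downarrow 0$ suffices to witness the ``for all $\eps > 0$'' quantifier in the definition of convergence. No new probabilistic input beyond Borel--Cantelli and monotonicity of $P$ is required.
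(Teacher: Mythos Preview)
Your proof is correct and is the standard Borel--Cantelli argument. The paper does not supply its own proof of this proposition; it merely quotes the result from an external reference (Suquet's lecture notes) and applies it, so there is nothing further to compare.
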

We deduce, for all $x, y\in\R$, that there exists $\o_0^\e\subset \o_1^\e$ with $P(\o_0^\e)= P(\o_1^\e)=1-\theta(\e)$ such that
 \begin{equation}\label{convergence presque partout de la solution du probleme metrique}
	\frac{1}{t} \left|m_{\mu}(ty,tx,\oo)-E[m_{\mu}(ty,tx,\oo)]\right|\to 0 \mbox{ for all }\oo\in \o_0^\e.\end{equation}

	This means that the random process $\frac{1}{t}m_{\mu}(ty,tx,\oo)$ is very close to his average for large $t$. So to conclude the proof, it suffices to prove that this average converges to a deterministic quantity which we denoted by $\overline{m}_{\mu}$. We know that the solution of the metric problem is sub-additive and then his average too. Moreover by \eqref{ineg.sol.metr.prob}, $m_{\mu}(y,x,\cdot)\ge 0$  and so $E(m_{\mu}(y,x,\cdot)) \in \R^{+}\cup \{ +\infty\}$.
		
		We now use the following lemma.
		\begin{lem}[Fekete's Lemma]{{\cite[Lemma 1.4]{Fekete}}}
			Let ${\displaystyle (u_{n})_{n\geq 1}}$ be a sub-additive sequence. Then the limit, when $n$ goes to $+\infty$, of the sequence ${\displaystyle (u_{n}/n)_{n\geq 1}}$ exists and we have
			$${\displaystyle \lim _{n\rightarrow +\infty }{\dfrac {u_{n}}{n}}=\inf _{n\geq 1}{\dfrac {u_{n}}{n}}\in \mathbb {R} \cup \{-\infty \}.} $$
		\end{lem}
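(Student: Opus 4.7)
The plan is to establish the two-sided inequality $\liminf_{n\to+\infty} u_n/n \ge L$ and $\limsup_{n\to+\infty} u_n/n \le L$, where $L := \inf_{k\ge 1} u_k/k \in \mathbb R\cup\{-\infty\}$. The lower bound is immediate from the very definition of $L$: for every $n\ge 1$, $u_n/n \ge L$, so $\liminf u_n/n \ge L$. Hence all the substance lies in the matching upper bound.

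For the upper bound, I would fix an arbitrary $k\ge 1$ and perform Euclidean division $n = q(n)k + r(n)$ with $0\le r(n)< k$. Iterating the sub-additivity inequality $u_{a+b}\le u_a+u_b$, I obtain, for $r(n)\ge 1$,
\[
u_n \;=\; u_{q(n)k+r(n)} \;\le\; q(n)\,u_k + u_{r(n)},
\]
(and $u_n\le q(n)u_k$ when $r(n)=0$). Dividing by $n$ gives
\[
\frac{u_n}{n} \;\le\; \frac{q(n)k}{n}\cdot\frac{u_k}{k} + \frac{u_{r(n)}}{n}.
\]
As $n\to +\infty$ with $k$ fixed, $q(n)k/n \to 1$ and the remainder $u_{r(n)}/n$ tends to $0$ because $r(n)$ only takes finitely many values $\{0,1,\dots,k-1\}$ so $|u_{r(n)}|$ is bounded by $\max_{0\le j<k}|u_j|$. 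Therefore
\[
\limsup_{n\to+\infty} \frac{u_n}{n} \;\le\; \frac{u_k}{k}.
\]
Since $k\ge 1$ was arbitrary, taking the infimum over $k$ gives $\limsup u_n/n \le L$.

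Combining both inequalities yields $\lim_{n\to+\infty} u_n/n = L \in \mathbb R\cup\{-\infty\}$. The only slightly delicate point is making sense of the bound on $u_{r(n)}$: one must either adopt the convention $u_0:=0$ or treat the case $r(n)=0$ separately, and one must observe that although the sequence $u_j$ is not assumed bounded globally, the values $u_0,\dots,u_{k-1}$ form a finite set and so have a finite maximum, which suffices for the term $u_{r(n)}/n$ to vanish. No other step is genuinely hard, so the argument is completely elementary and relies only on sub-additivity together with the Archimedean property.
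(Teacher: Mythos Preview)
Your proof is correct and is precisely the standard argument for Fekete's Lemma. The paper does not supply its own proof of this statement; it merely cites it from the reference \cite{Fekete} and invokes it as a black box, so there is nothing to compare against beyond noting that your argument is the classical one.
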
 
		This implies  that the limit of $\frac{E(m_{\mu}(ty,tx,\oo))}{t}$ exists and we denoted it by $\overline{m}_{\mu}(y,x)$. Injecting this in \eqref{convergence presque partout de la solution du probleme metrique}, we get, for all $\mu>\overline{A}$ and $x,y\in \R$, that
 $$  \frac{m_{\mu}(ty,tx,\oo)}{t} \to \overline{m}_{\mu}(y,x)\mbox{ almost surely}.$$
 This ends the proof of Theorem\ \ref{homo.du.pmj}
\end{proof}
In order to prove Theorem \ref{homo.du.pmj-bis}, we need to determine the equation satisfied by $\overline m$, when $x\ge 0,y>0$ (resp. $x\le 0,y<0$).

		\begin{prop}\label{proposition de la limite de la solution du probleme limite}
		For all $\mu >\overline{A}$, and $x \ge 0$ we have that $\overline{m}_{\mu}(\cdot,x) \in \mathbf{Lip} $ is a solution of 
		\begin{equation}\label{equation limite du probleme metrique}
		 \left\{
\begin{aligned}
    &\overline{H}_{R}(D\overline{m}_{\mu}(y,x))=\mu &\mbox{ for } y\neq x,&~ y>0  ,\\
    & \overline{m}_{\mu}(x,x)=0.
\end{aligned}		 
		 \right.
		\end{equation}
		\end{prop}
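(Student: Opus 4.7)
The strategy is to exploit the local uniform convergence $u_t(y) := \tfrac{1}{t} m_{\mu}(ty, tx, \oo) \to \overline m_{\mu}(y, x)$ together with the observation that on any compact subset of $\{y > 0\} \setminus \{x\}$ the rescaled argument $ty$ eventually leaves the perturbation zone, so that $u_t$ is governed only by $H_R$. I would first dispatch the easy parts: the uniform $L_{\mu}$-Lipschitz bound on $\{u_t\}_t$ in $y$ follows from \eqref{ineg.sol.metr.prob} applied to $m_{\mu}(\cdot, tx, \oo)$ and rescaled (the constant $L_{\mu}$ is $\oo$-independent); combined with the pointwise convergence in Theorem \ref{homo.du.pmj} and Arzel\`a--Ascoli, this upgrades to locally uniform convergence and transfers Lipschitz continuity to $\overline m_{\mu}(\cdot, x)$. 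The value $\overline m_{\mu}(x, x) = 0$ is immediate from $m_{\mu}(tx, tx, \oo) = 0$ for all $t$.

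For the viscosity equation, fix $y_0 > 0$ with $y_0 \neq x$ and a relatively compact neighborhood $U$ of $y_0$ in $\{y > 0\} \setminus \{x\}$. Under ${\bf (H}$-${\bf \eps)}$, as soon as $t \inf_U y > 2/\sqrt{\eps}$, one has $\psi_1^{\eps}(ty) = 1$ throughout $U$, hence $H(\cdot, ty, \oo) \equiv H_R(\cdot, ty, \oo)$ there (the case ${\bf (H}$-${\bf WFL)}$ is analogous with threshold $ty > 1$). Consequently, for such $t$, $u_t$ is a viscosity solution of
\begin{equation*}
H_R(Du_t, ty, \oo) = \mu \quad \text{on } U,
\end{equation*}
and I would conclude via the perturbed test function method for stationary ergodic hamiltonians. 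For any smooth $\phi$ such that $\overline m_{\mu}(\cdot, x) - \phi$ has a strict local extremum at $y_0$, use the approximated correctors $v_R^{\delta}(\cdot, \oo, D\phi(y_0))$ of Proposition \ref{pro:homo.stoch.premier.ordre.quasiconvex} to form
\begin{equation*}
\phi^t(y) := \phi(y) + \tfrac{1}{t}\, v_R^{\delta(t)}(ty, \oo, D\phi(y_0)).
\end{equation*}
The corrector equation together with Lipschitz continuity of $H_R$ in $p$ yields $H_R(D\phi^t(y), ty, \oo) = -\delta(t) v_R^{\delta(t)}(ty, \oo, D\phi(y_0)) + o(1) \to \overline H_R(D\phi(y_0))$ uniformly on $U$, on the full-probability event $\o_2$ of Proposition \ref{pro:homo.stoch.premier.ordre.quasiconvex}. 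Locating the extrema of $u_t - \phi^t$ at points $y_t \to y_0$ and inserting into the viscosity inequality for $u_t$ yields $\overline H_R(D\phi(y_0)) \leq \mu$ (resp.\ $\geq \mu$) upon sending $t \to \infty$, giving the sub- (resp.\ super-) solution property.

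The main obstacle is the scale mismatch in the correctors: Proposition \ref{pro:homo.stoch.premier.ordre.quasiconvex} provides $\delta v_R^{\delta} \to -\overline H_R(p)$ only on $B_{1/\delta}$, while the argument requires uniform control over $\{ty : y \in U\} \subset B_{tR}$ with $R > \sup_U |y|$. I would resolve this by choosing $\delta(t) = 1/(tR)$, so that $1/\delta(t) = tR$ covers the rescaled neighborhood. The trade-off is that $\tfrac{1}{t} v_R^{\delta(t)}(ty, \oo, p) = R \cdot \delta(t) v_R^{\delta(t)}(ty, \oo, p)$ converges uniformly on $U$ to the \emph{constant} $-R\,\overline H_R(p)$ rather than to $0$; an additive constant merely shifts $\phi$ and leaves the extremum argument intact. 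Finally, the good event is obtained by intersecting $\o_2$ with $\o_0^{\eps}$ from Theorem \ref{homo.du.pmj}, which retains probability $1 - \theta(\eps) \to 1$ under ${\bf (H}$-${\bf \eps)}$ and full probability under ${\bf (H}$-${\bf WFL)}$.
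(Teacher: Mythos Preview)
Your proof follows the same perturbed test function strategy as the paper, with one substantive difference in the choice of corrector: you use the approximated correctors $v_R^{\delta}$ from Proposition \ref{pro:homo.stoch.premier.ordre.quasiconvex}, whereas the paper uses the exact corrector $v^R(y,x,\oo)=m_{\mu+\theta}^R(y,x,\oo)-p(y-x)$ built from the metric solution for $H_R$ at level $\mu+\theta=\overline H_R(p)$. Both choices are valid. Your choice forces you to manage the scale mismatch, and your $\delta(t)=1/(tR)$ trick together with the observation that the resulting additive constant $-R\,\overline H_R(p)$ is harmless are both correct; the paper's choice sidesteps this issue at the price of invoking the metric problem for $H_R$ directly.

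One point to tighten: the sentence ``locating the extrema of $u_t-\phi^t$ at $y_t$ and inserting into the viscosity inequality for $u_t$'' does not go through as stated, because $\phi^t$ is only Lipschitz and cannot serve as a $C^1$ test function for $u_t$. The paper proceeds (and you should too) by contradiction: assume $\overline H_R(D\phi(y_0))=\mu+\theta>\mu$, and show that $\phi^t$ is a viscosity \emph{super-solution} of $H_R(D\phi^t,ty,\oo)\ge \mu+\theta/2$ in a small ball $B_{\overline r}(y_0)$ --- this is precisely what your corrector computation gives once it is read in the viscosity sense by testing $\phi^t$ from below by a smooth $\psi$. Then apply the comparison principle on $B_{\overline r}(y_0)$, with the boundary ordering furnished by the strict maximum and the locally uniform convergence, to obtain $u_t+k_r/2\le \phi^t$ in the interior; sending $t\to\infty$ at $y=y_0$ yields the contradiction (the additive constant cancels, as you noted). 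Also note that the $o(1)$ in your line $H_R(D\phi^t(y),ty,\oo)=\overline H_R(D\phi(y_0))+o(1)$ is only small near $y_0$ (it contains $|D\phi(y)-D\phi(y_0)|$), not uniformly on all of $U$; this is harmless since the argument only needs a small ball around $y_0$.
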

		\begin{proof} We prove that $\overline{m}_{\mu}(\cdot,x)$ is a sub-solution for \eqref{equation limite du probleme metrique}. The proof of super-solution is similar so we skip it. Let $\phi$ be a test function such that $\overline{m}_{\mu}(\cdot,x)-\phi(\cdot)$ attains a strict maximum point at $\overline{y}\neq x$, i.e $$  (\overline{m}_{\mu}(\cdot,x)-\phi(\cdot))(y)<(\overline{m}_{\mu}(\cdot,x)-\phi(\cdot))(\overline{y})=0\mbox{ for }y \in B_{\overline{r}}(\overline{y})\setminus\{ \overline{y}\}$$
for $\overline r$ small enough such that $B_{\overline r}(\overline y)\subset ]0,+\infty[\setminus\{x\}$.
Since the maximum is strict, we assume that \begin{equation}\label{existence de kr}
		    (\overline{m}_{\mu}(\cdot,x)-\phi(\cdot))(y)\le -k_{r} \mbox{ for all }y \in \partial B_{\overline{r}}(\overline{y})
		\end{equation}
for some $k_r>0$ small enough. 
		We argue by contradiction, by assuming that \begin{equation}\label{contradiction que la solution limite du pmj ne verifie pas le pmj limite}
	\overline{H}_{R} (\phi'(\overline{y}))-\mu=\theta>0. 	\end{equation} 
		Let $p=\phi'(\overline{y})$ and $m_{\mu+\theta}^{R}(\cdot,x,\oo)$ be a solution of the metric problem, far away from the junction point \eqref{P.M.LOIN.J}, with $H_{\a}=H_{R}$ and $\{0\}$ replaced by $\{x\}$.
		We also know that $v^{R}(y,x,\oo)=m_{\mu+\theta}^{R}(y,x,\oo)-p(y-x)$ is a solution of \eqref{P.M.LOIN.J.avec.p}. In particular it's a solution on $]0,+\infty[\setminus\{x\}$. We claim that, if $\eta>0$ is small enough, then the perturbed test function 
		$$    \phi_{\eta}(y,x)=\phi(y)+\eta v^{R}\left(\frac{y}{\eta},\frac{x}{\eta},\oo\right)     $$
is a super-solution of 
$$   H\left(\phi_{\eta}'(y,x),\frac{y}{\eta},\oo\right)-\mu\ge \frac{\theta}{2}\mbox{ in }B_{\overline r}(\overline{y}),$$
for $\overline{r}>0$ small enough. To see this, let $\psi$ be a test function touching $\phi_{\eta}$ from below at $y_{1}\in B_{\overline r}(\overline{y})$. We have that $\psi(y_{1})=\phi_{\eta}(y_{1})$. So we obtain that the function 
$$ \zeta(y)=\frac{1}{\eta}(\psi(\eta y)-\phi(\eta y))  
$$ 
touches $v^{R}$ from below at $y_{1}$. Then we obtain that 
$$ 
H_{R}\left(\psi '(y_{1})-\phi '(y_{1})+p,\frac{y_{1}}{\eta},\oo\right) \ge \overline{H}_{R}(p).
$$ 
We deduce, using \eqref{contradiction que la solution limite du pmj ne verifie pas le pmj limite}, the continuity of $\phi'$ and the fact that $H_{R}$ is Lipschitz continuous, that for $\eta$ and $\overline r$ small enough (in particular such that $\frac{y_1}{\eta}>\frac{2}{\sqrt \e}$)
		$$ \begin{aligned}
		    &	H\left(\psi' (y_{1}),\frac{y_{1}}{\eta},\oo\right)-\mu=	H_{R}\left(\psi '(y_{1}),\frac{y_{1}}{\eta},\oo\right)-\mu\\&
		 \ge 
		H_{R}\left(\psi '(y_{1}),\frac{y_{1}}{\eta},\oo\right)+\theta-H_{R}\left(\phi'(\overline{y})+\psi '(y_{1})-\phi '(y_{1}),\frac{y_{1}}{\eta},\oo\right)\\
		& \ge \theta/2.
			\end{aligned} 
$$ 
So the claim is proved. Using \eqref{existence de kr}, we deduce, for $\eta$ small enough, that 
$$ 
\eta m_{\mu}\left(\frac{y}{\eta},	\frac{x}{\eta},\oo\right)+\frac{k_{r}}{2}\le \phi_{\eta}\mbox{ for }y \in\partial B_{\overline r}(\overline{y}).
$$
So by comparison principle we deduce that 
$$ 
\eta m_{\mu}\left(\frac{y}{\eta},	\frac{x}{\eta},\oo\right)+\frac{k_{r}}2\le \phi_{\eta}\mbox{ for }y \in B_{r}(\overline{y}). 
$$ 
Now passing to limit $\eta\to 0$ and taking $y = \overline{y}$ we obtain that
 $$  \overline{m}_{\mu}(\overline{y},x)<\overline{m}_{\mu}(\overline{y},x)+\frac{k_{r}}2\le \phi(\overline{y})=\overline{m}_{\mu}(\overline{y},x).$$ This is a contradiction with the definition of $k_{r}>0$. This ends the proof.\end{proof}
		
In the same way, we have the following proposition for $x\le 0,y<0$
		\begin{prop}\label{proposition de la limite de la solution du probleme metrique pour x <0}
		For $x \le 0$, and for all $\mu >\overline{A}$, we have that $\overline{m}_{\mu}(\cdot,x) \in \mathbf{Lip} $ is a solution of 
		\begin{equation}
		 \left\{
\begin{aligned}
    &\overline{H}_{L}(D\overline{m}_{\mu}(y,x))=\mu &\mbox{ for } y\neq x,&~y< 0\\
    & \overline{m}_{\mu}(x,x)=0& \mbox{ for }y=x.
\end{aligned}		 
		 \right.
		\end{equation}
	\end{prop}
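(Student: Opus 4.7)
The plan is to adapt, almost verbatim, the proof of Proposition \ref{proposition de la limite de la solution du probleme limite}, with $H_R$ replaced by $H_L$ and the half-line $]0,+\infty[$ replaced by $]-\infty,0[$. First, the Lipschitz regularity of $\overline{m}_\mu(\cdot,x)$ follows by passing to the limit in \eqref{ineg.sol.metr.prob} along the sequence $t\to\infty$ that defines $\overline m_\mu$. Then, I would prove separately the sub-solution and super-solution properties of \eqref{equation limite du probleme metrique} by a perturbed test function argument in the spirit of Evans.

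For the sub-solution property, I would take a test function $\phi$ such that $\overline{m}_\mu(\cdot,x)-\phi$ attains a strict maximum at some $\overline y\neq x$ with $\overline y<0$, and I would choose $\overline r>0$ so small that $B_{\overline r}(\overline y)\subset ]-\infty,0[\setminus\{x\}$ and
$$(\overline{m}_\mu(\cdot,x)-\phi)(y)\le -k_r\quad \text{for all } y\in\partial B_{\overline r}(\overline y),$$
for some $k_r>0$. I would then argue by contradiction, assuming $\overline{H}_L(\phi'(\overline y))=\mu+\theta$ for some $\theta>0$. Setting $p=\phi'(\overline y)$, I would let $m_{\mu+\theta}^L(\cdot,x,\oo)$ be the solution of the metric problem far from the junction (as in Proposition \ref{pro:3.1}, with $\{0\}$ replaced by $\{x\}$), and define $v^L(y,x,\oo)=m_{\mu+\theta}^L(y,x,\oo)-p(y-x)$, which solves \eqref{P.M.LOIN.J.avec.p} on $]-\infty,0[\setminus\{x\}$. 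The perturbed test function is
$$\phi_\eta(y,x)=\phi(y)+\eta v^L\left(\tfrac y\eta,\tfrac x\eta,\oo\right).$$

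The key verification is that $\phi_\eta$ is a super-solution of $H(\phi_\eta'(y),y/\eta,\oo)-\mu\ge \theta/2$ in $B_{\overline r}(\overline y)$ for $\eta$ and $\overline r$ small enough. Here the crucial point, specific to the left-hand side, is the choice of $\eta,\overline r$ such that $y_1/\eta<-2/\sqrt{\e}$ for every $y_1\in B_{\overline r}(\overline y)$; this guarantees, by the definition of $\psi_1^\e$ in \textbf{(H7-}$\bm{\e}$\textbf{)}, that $H(\cdot,y_1/\eta,\oo)=H_L(\cdot,y_1/\eta,\oo)$, so that we can invoke the equation satisfied by $v^L$. Since $\overline y<0$ is fixed and $\eta\to 0$, this constraint is automatically satisfied for $\eta$ small. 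The computation then proceeds exactly as in the previous proposition, using the continuity of $\phi'$ and the Lipschitz property of $H_L$ in $p$, to produce the strict inequality $H(\psi'(y_1),y_1/\eta,\oo)-\mu\ge \theta/2$ for any $\psi$ touching $\phi_\eta$ from below at $y_1$.

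Once this super-solution property is established, the comparison principle applied to $\eta m_\mu(\cdot/\eta,x/\eta,\oo)+k_r/2$ and $\phi_\eta$ on $B_{\overline r}(\overline y)$, together with the boundary inequality and the convergence $\eta m_\mu(\cdot/\eta,x/\eta,\oo)\to \overline m_\mu(\cdot,x)$ from Theorem \ref{homo.du.pmj}, yields the contradiction $\overline m_\mu(\overline y,x)+k_r/2\le \phi(\overline y)=\overline m_\mu(\overline y,x)$. The super-solution property is obtained symmetrically, using $-v^L$ instead of $v^L$ and reversing inequalities. I do not expect any genuinely new difficulty beyond the one already handled in the proof of Proposition \ref{proposition de la limite de la solution du probleme limite}; the only subtle point, namely that the perturbation zone of $H$ does not interfere with the argument, is controlled exactly as before by the freedom in choosing $\eta$ small (and using $\overline y<0$).
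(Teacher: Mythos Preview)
Your proposal is correct and follows exactly the approach the paper intends: the paper itself does not write out a separate proof for this proposition but simply says ``In the same way, we have the following proposition for $x\le 0,\ y<0$'', so your detailed adaptation of the proof of Proposition~\ref{proposition de la limite de la solution du probleme limite} (replacing $H_R$ by $H_L$, $]0,+\infty[$ by $]-\infty,0[$, and using $y_1/\eta<-2/\sqrt{\e}$ to ensure $H=H_L$) is precisely what is required. One minor remark: for the super-solution part the natural construction is not literally ``$-v^L$'' but rather the analogue built from $m^L_{\mu-\theta}$ at a lower level (with $\theta>0$ small enough that $\mu-\theta>\mu^\star_L$, which is possible since $\mu>\overline A\ge \mu^\star_L$); this is the standard symmetric argument and introduces no new difficulty.
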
	
	\bigskip

We are now able to give the proof of Theorem \ref{homo.du.pmj-bis}
\begin{proof}[Proof of Theorem \ref{homo.du.pmj-bis}]
We only prove the case $x,y\ge0$, the other one being similar. To prove the result, it is enough to prove that
\begin{equation}\label{representation de la limite de pmj}
 \overline{m}_{\mu}(y,x)=\sup\left\{ p(y-x);~ 
\overline{H}_{R}(p)\le \mu 
 \right\} . \end{equation}

Inequality \eqref{ineg.sol.metr.prob} yields that 
$$ 0
<l_{\mu} \le \frac{\overline{m}_{\mu}(y,x)}{|y-x|}\le L_{\mu}
,$$ 
then $\liminf\limits_{y \to +\infty}\frac{\overline{m}_{\mu}(y,x)}{|y-x|}>0$ and so the comparison principle (Proposition \ref{principe de comparaison du probleme metrique sur la jonction} reformulated with $\{0\}$ replaced by $\{x\}$) holds. Since the right hand side of \eqref{representation de la limite de pmj} is a sub-solution of \eqref{equation limite du probleme metrique} then Proposition \ref{principe de comparaison du probleme metrique sur la jonction} yields that
 $$ \overline{m}_{\mu}(y,x)\ge \sup\left\{ p(y-x); ~
\overline{H}_{R}(p)\le \mu  \right\}.   $$
     If the reverse inequality doesn't hold, and since $\overline{m}_{\mu}(\cdot,x)$ is a Lipschitz continuous function, we can find $y >0$; $y \neq x $ such that $\overline{m}_{\mu}(\cdot,x)$ is differentiable at $y $ and   $$ \overline{m}_{\mu}(y,x)> \sup\left\{ p(y-x); ~\overline{H}_{R}(p)\le \mu   \right \}.   $$ 	We also have, immediately from the form of the limit of $\frac{m_{\mu}(ty,tx)}{t}$, that $\overline{m}_{\mu}$ must be positively homogeneous. The sub-additive property of $m_{\mu}$ easily translates into subadditivity property for $\overline{m}_{\mu}$ and therefore $\overline{m}_{\mu}$ is convex. The function $\overline{m}_{\mu}(\cdot,x)$ is differentiable at $y $ then $D\overline{m}_{\mu}(y,x)$ is in the sub- and the super differential of a convex function, then $$ \overline{m}_{\mu}(y,x)=D\overline{m}_{\mu}(y,x)(y-x). $$ This follows that $$ 
\overline{H}_{R}(D\overline{m}_{\mu}(y,x))> \mu .$$ This contradicts the fact that $\overline{m}_{\mu}(\cdot,x)$ is a solution of \eqref{equation limite du probleme metrique}. This ends the proof.
    \end{proof}

    Combining Theorem \ref{homo.du.pmj}, Theorem \ref{homo.du.pmj-bis}, Proposition \ref{proposition de la limite de la solution du probleme limite} and Proposition \ref{proposition de la limite de la solution du probleme metrique pour x <0}  for $x \le 0$ and $x\ge 0$, we get the following result for $x=0$.
    \begin{prop}
    For all $\mu>\overline{A}$, we have that $$ \frac{m_{\mu}(ty,0,\oo)}{t}\to \overline{m}_{\mu}(y,0)=\overline{m}_{\mu}(y)\mbox{ for all  }\oo\in \o_0^\e.  $$The function $\overline{m}_{\mu}(\cdot)$ is a solution of \begin{equation}\label{probleme metrique limite en 0}
    \left\{
    \begin{aligned}
    &\overline{H}_{R}(D\overline{m}_{\mu}(y,0))=\mu &y>0,\\
     &\overline{H}_{L}(D\overline{m}_{\mu}(y,0))=\mu &y<0,\\
   &  \overline{m}_{\mu}(0,0)=0,
    \end{aligned}
    \right.
       \end{equation}
 and  is given by 
 $$\overline{m}_{\mu}(y)=\left\{
 \begin{aligned}
 &\overline{p}^{R,+}_{\mu}\cdot y &y>0;\\
 &\overline{p}^{L,-}_{\mu}\cdot y &y<0.
 \end{aligned}
 \right.
 $$\end{prop}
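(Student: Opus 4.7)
The final proposition is essentially a consolidation of the preceding results at the common endpoint $x=0$, so my plan is to assemble the four ingredients rather than prove anything genuinely new.

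First, I would apply Theorem \ref{homo.du.pmj} directly with $x=0$. This yields an event $\o_0^\e$ on which
$$\frac{m_\mu(ty,0,\oo)}{t}\to \overline{m}_\mu(y,0)=:\overline{m}_\mu(y)$$
for every $y\in \R$ and every $\oo\in\o_0^\e$, establishing the convergence statement and the existence of the limit function.

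Second, I would split according to the sign of $y$ and use the two specializations of Theorem \ref{homo.du.pmj-bis}. Since $0\ge 0$, we are in the ``right'' case whenever $y>0$, so $\overline{m}_\mu(y,0)=\overline{m}_\mu^R(y-0)=\overline{p}^{R,+}_\mu\cdot y$. Since $0\le 0$, we are in the ``left'' case whenever $y<0$, so $\overline{m}_\mu(y,0)=\overline{m}_\mu^L(y-0)=\overline{p}^{L,-}_\mu\cdot y$. This gives both branches of the explicit formula and, in particular, shows that $\overline m_\mu$ is Lipschitz and continuous at $0$ (indeed both branches vanish at $y=0$).

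Third, for the PDE system I would invoke Proposition \ref{proposition de la limite de la solution du probleme limite} applied with $x=0\ge 0$, which says that $\overline{m}_\mu(\cdot,0)$ solves $\overline{H}_R(D\overline{m}_\mu)=\mu$ on $\{y>0\}$, and Proposition \ref{proposition de la limite de la solution du probleme metrique pour x <0} applied with $x=0\le 0$, which gives $\overline{H}_L(D\overline{m}_\mu)=\mu$ on $\{y<0\}$. The normalization $\overline{m}_\mu(0,0)=0$ is preserved by passing to the limit since $m_\mu(0,0,\oo)=0$ for every $t$.

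There is no real obstacle here; the only point that might require a line of comment is that when $x=0$ we apply Theorem \ref{homo.du.pmj-bis} in both cases (using $0\ge 0$ and $0\le 0$ respectively), which is legitimate because each case only requires a one-sided condition on $x$, and the two resulting formulas agree at $y=0$ (both give $0$) so the limit function is well-defined across the junction.
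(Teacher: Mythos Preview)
Your proposal is correct and follows exactly the approach indicated in the paper, which simply states that the proposition is obtained by combining Theorem \ref{homo.du.pmj}, Theorem \ref{homo.du.pmj-bis}, Proposition \ref{proposition de la limite de la solution du probleme limite} and Proposition \ref{proposition de la limite de la solution du probleme metrique pour x <0} at $x=0$. Your observation that $x=0$ satisfies both the hypothesis $x\ge 0$ and $x\le 0$, allowing both specializations to be applied, is precisely the point.
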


\subsection{Some properties of $m_{\mu}(x,y)$ }
In this subsection we give some properties of the function $m_\mu(x,y)$. To do this, we consider the hamiltonian $G:\R\times \R\times \o\to \R$ given by 
$$ G(p,y,\oo)=H(-p,y,\oo) . $$
The function $G$ verifies assumptions ${\bf (H)}$, and the definition of $\overline{A}$ is the same. 

Proceeding in the same way, we can prove that there exists, for $x\in \R$, and for all $\mu \ge \overline{A}$ a solution $n_{\mu}(\cdot,x,\oo)$ solution of \begin{equation}\label{HJ G}
\left\{    \begin{aligned}
&G(Dn_{\mu}(y,x,\oo),y,\oo)=\mu,& \mbox{ for }y \neq x;\\
&n_{\mu}(x,x)=0.
    \end{aligned}\right.
\end{equation} 
Moreover, $n_{\mu}$ is given by 
$$n_{\mu}(y,x)=\sup\{  v(y,\oo)-v( x,\oo    ); ~v\in \textbf{Lip};~ G(Dv,y,\oo)\le \mu \mbox{ in }\R    \},    $$
and so
 $$ n_{\mu}(y,x)=m_{\mu}(x,y). $$
We also have that, for $x \ge 0,~y>0$ (resp. $x \le 0,~y<0$) that 
$$\frac{n_{\mu}(ty,tx)}{t}\to \overline{n}^{R}_{\mu}(y-x)=\overline{m}_{\mu}^{R}(x-y),      $$ 
$$\left(\textrm{resp. } \frac{n_{\mu}(ty,tx)}{t}\to \overline{n}^{L}_{\mu}(y-x)=\overline{m}_{\mu}^{L}(x-y)\right).$$
In particular, 
 $$ \overline{n}^{R}_{\mu}(y-x)=\overline{m}^{R}_{\mu}(x-y)=
\left\{
\begin{aligned}
&-\overline{p}_{\mu}^{R,-}(y-x) & \mbox{ for }y>x;\\
&-\overline{p}_{\mu}^{R,+}(y-x) & \mbox{ for }y<x.
\end{aligned}
\right.
$$
 \section{Main results for the approximated corrector at the junction point}\label{les resultats du correcteur app}
 For stochastic homogenization, it is well known that correctors don't exist in general and one have to introduce approximated correctors. 
 At the junction, using the particular form of the test function (see  \cite[Theorem 2.7]{Imbert}) and  the ansatz
 $$
  u^{\eps}(t,x)=u(t,0)+\eps v^{\d}(\eps^{-1}y,\oo)
 $$
 the approximated corrector $v^\delta$ has to satisfy
 \begin{equation}\label{correcteur app}
		\d v^{\d}(y,\oo)+H(Dv^{\d},y,\oo)=0 ~~y \in \R,
\end{equation}
$$\d v^{\d}(y,\oo)   \textrm{ converges to }-\overline{A} \textrm{ in  balls of radius }1/\d 
$$
and 
$$\d v^{\d}\left(\frac{y}{\d},\oo\right)\to W(y)$$
where $W(0)=0$  and 
\begin{equation}\label{eq:W}
\overline{p}_{R}^{+}y\mathbf{1}_{\{y >0\}} +\overline{p}^{-}_{L}y \mathbf{1}_{\{y<0\}} \le W(y)\le\hat{p}_{R}^{+}y\mathbf{1}_{\{y >0\}} +\hat{p}^{-}_{L}y \mathbf{1}_{\{y<0\}}
\end{equation}
with 
\begin{equation}\label{eq:p+p-}
\left\{\begin{array}{ll}
\overline p_L^-=\sup\{p, \; \overline H_L(p)=\overline A\}\\
\hat p_L^-=\inf\{p, \; \overline H_L(p)=\overline A\}
\end{array}
\right.
\quad {\rm and}
\quad 
\left\{\begin{array}{ll}
\overline p_R^+=\inf\{p, \; \overline H_R(p)=\overline A\}\\
\hat p_R^+=\sup\{p, \; \overline H_R(p)=\overline A\}.
\end{array}
\right.
\end{equation}
 Note in particular that if $\overline A>\mu^\star_R$, then, by convexity of $\overline H_R$,  $\overline p_R^+=\hat p_R^+=\overline p_{\overline A}^{R,+}$ and the same result holds for $\overline p^-_L$ and $\hat p^-_L$.
 Theses results are made precise in  the following theorem.
 \begin{theorem}[Approximated correctors]\label{convergence de correcteur approche dans une boule et quil verifie les bonnes pentes}
			Assume ${\bf (H}$-${\bf \e)}$ (resp. ${\bf (H}$-${\bf WFL)})$ and let $\e_0>0$ small enough. Then for all $0<\d \le \e\le \e_0$, $r>0$, there exists a solution $v^{\d}$ of \eqref{correcteur app} such that, for all $\oo\in \o_{0}^{\e_0}$ (resp $\oo\in \o_0$), $\d v^{\d }(y,\oo)$ converges locally uniformly in $y\in B_{r/\d}$  to $-\overline{A}$, i.e, we have that
$$
\lim\limits_{ \d \to 0}|\d v^{\d}(y,\oo)+\overline{A}|=0 \mbox{ and for all }y \in B_{r/\d}.
$$ 
In addition, if we define $\overline v^\d$ by  $\overline v^{\d}(y,\oo)=\d v^{\d}(\d^{-1}y,\oo)$, then $\overline v^\d$ converges locally uniformly, as $\d \to 0$, to a function $W$ which satisfies \eqref{eq:W}.
\end{theorem}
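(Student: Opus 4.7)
The plan proceeds in three main steps: construction of $v^\delta$ with a priori bounds; matching lower and upper bounds for $\delta v^\delta$ yielding the first convergence; and identification of the rescaled limit $W$ with its slope bounds.

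For existence, I would construct $v^\delta$ by Perron's method. The constant $-\overline A/\delta$ is a sub-solution of \eqref{correcteur app} on $\Omega_1^\eps$ (resp.\ $\Omega_1$) because, by Theorem \ref{le.flux.limitteur.est.deterministe-eps} (resp.\ Theorem \ref{le.flux.limitteur.est.deterministe}), $\tilde A(\omega)=\sup_y H(0,y,\omega)=\overline A$, hence $-\overline A+H(0,y,\omega)\le 0$; the constant $-\delta^{-1}\inf_y H(0,y,\omega)$, finite by (H2), is a super-solution. Combined with the standard comparison principle for $\delta v+H(Dv,y,\omega)=0$ adapted from Proposition \ref{principe de comparaison du probleme metrique sur la jonction}, this yields a unique bounded viscosity solution $v^\delta$; the coercivity assumption (H3) then provides a uniform Lipschitz bound $|Dv^\delta|\le C$ independent of $\delta$.

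The lower bound $\delta v^\delta(y,\omega)\ge -\overline A$ is immediate from comparison with the sub-solution $-\overline A/\delta$. For the matching upper bound, I would test the viscosity sub-solution inequality for $v^\delta$ against quadratic perturbations of its tangent line at each point of differentiability (guaranteed a.e.\ by Rademacher's theorem). Together with (H6), this gives $\delta v^\delta(y)\le -H(Dv^\delta(y),y,\omega)\le -H(0,y,\omega)$ at a.e.\ $y$, which extends to every $y$ by continuity of $\delta v^\delta$ and of $H(0,\cdot,\omega)$ (the latter from (H4)). The lemma in the proof of Theorem \ref{le.flux.limitteur.est.deterministe-eps} (resp.\ Theorem \ref{le.flux.limitteur.est.deterministe}) then produces $y^\star\in[-1/\sqrt\eps,1/\sqrt\eps]$ (resp.\ $y^\star$ in the correct half-line) with $H(0,y^\star,\omega)=\overline A$, since $H$ coincides with $H_0$ (resp.\ with $H_R$ or $H_L$) there. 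Hence $\delta v^\delta(y^\star)=-\overline A$ exactly. The uniform Lipschitz bound propagates this value to any fixed compact $K\subset\mathbb R$:
\[
\delta v^\delta(y)\le \delta v^\delta(y^\star)+C\delta|y-y^\star|\le -\overline A+C\delta\bigl(\mathrm{diam}(K)+1/\sqrt{\eps}\bigr)\quad\text{for }y\in K,
\]
and the right-hand side tends to $-\overline A$ in the regime $\delta\le\eps\to 0$ because the constraint $\delta\le\eps$ gives $\delta/\sqrt{\eps}\le\sqrt{\delta}\to 0$.

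For the rescaled function $\overline v^\delta(y)=\delta v^\delta(y/\delta,\omega)$, the uniform bounds give precompactness in $C_{\mathrm{loc}}(\mathbb R)$; let $W$ denote any subsequential limit. Away from $0$, the classical perturbed test function method applied with the stationary ergodic approximated correctors of $H_R$ (resp.\ $H_L$) from Proposition \ref{pro:homo.stoch.premier.ordre.quasiconvex} shows that $W$ solves $W+\overline H_R(DW)=0$ on $(0,+\infty)$ and $W+\overline H_L(DW)=0$ on $(-\infty,0)$, while the previous step pins its behaviour at the junction. The bounds \eqref{eq:W} then follow on each half-line by comparison of $W$ with the explicit linear solutions built from the extremal slopes $\overline p_R^+,\hat p_R^+$ (resp.\ $\overline p_L^-,\hat p_L^-$) of \eqref{eq:p+p-}, using the convexity of $\overline H_R$ and $\overline H_L$ to confine any boundary slope realising $\overline H_R(p)=\overline A$ (resp.\ $\overline H_L(p)=\overline A$) to the interval between these extremal values. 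The main difficulty of the whole proof lies in the upper bound of the second step: this is the sole place where the non-stationarity of $H$ at the origin creates a genuine obstacle, and its resolution relies on the deterministic identification $\tilde A(\omega)=\overline A$ obtained in Section \ref{sec:43} together with the scaling constraint $\delta\le\eps$, which brings the good point $y^\star$ close enough to the origin for the Lipschitz propagation to succeed.
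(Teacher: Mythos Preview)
Your approach diverges from the paper's in two substantive ways, and the second creates a genuine gap.

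For the upper bound on $\delta v^\delta$, your route through a.e.\ differentiability and (H6) is more elementary than the paper's Proposition~\ref{convergence du correcteur approvhe a partir du probleme metrique }, which compares $v^\delta$ with the metric-problem solution $m_{\overline A+\alpha}$ via an intricate variable-doubling argument. Two small cautions: ``testing against quadratic perturbations of the tangent line'' does not literally produce a $C^1$ test function from above at a mere point of differentiability (the $o(|y-y_0|)$ remainder can dominate any quadratic); the clean justification is that a Lipschitz viscosity sub-solution of a convex Hamiltonian satisfies the inequality a.e. Also, the supremum $\sup_y H(0,y,\omega)=\overline A$ need not be attained, so $y^\star$ should be chosen with $H(0,y^\star,\omega)\ge\overline A-\eta$ and the extra $\eta$ carried through. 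Your Lipschitz propagation then yields convergence of $\delta v^\delta$ to $-\overline A$ only on \emph{fixed} compacts, not on the growing balls $B_{r/\delta}$ the theorem asserts; the paper obtains the latter in Corollary~\ref{convergence du correct approch sur une boule infinie} by exploiting the stationarity of $H_R$ and $H_L$ outside the perturbation zone to write $v^\delta(y,\omega)=v^\delta(\overline\rho_0,\tau_{y-\overline\rho_0}\omega)$ for $y>\overline\rho_0=2/\sqrt\eps$, an idea your argument never invokes.

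The more serious gap is in the slope bound \eqref{eq:W}. Your equation $W+\overline H_R(DW)=0$ is the one formally suggested by homogenizing $\overline v^\delta+H_R(D\overline v^\delta,y/\delta,\omega)=0$, but the linear functions $y\mapsto\overline p_R^+ y$ and $y\mapsto\hat p_R^+ y$ do \emph{not} solve this equation (they solve $\overline H_R(Du)=\overline A$, which is different), so ``comparison with the explicit linear solutions'' has no footing. Knowing only that the boundary slope at $0$ lies in $[\overline p_R^+,\hat p_R^+]$ does not confine $W(y)$ for $y>0$. The paper handles this through Proposition~\ref{preuve des bonnes pentes du correc approch sur un domaine tronque}: using the upper bound $\delta v^\delta\le -\overline A+\tilde\eps_0$ on $B_{r/\delta}$ (which is precisely why the growing-ball convergence matters), $v^\delta$ becomes a viscosity super-solution of $H_R(Dv,y,\omega)\ge\overline A-\tilde\eps_0$ on $(y_0,r/\delta)$, and a one-sided comparison (Proposition~\ref{comparison.principale.tron.domaine}) with the metric-problem solution $m^R_{\overline A-2\tilde\eps_0}$, together with its homogenization from Proposition~\ref{pro:3.3}, yields the lower slope estimate $v^\delta(y+h)-v^\delta(y)\ge(\overline p_{\overline A}^{R,+}-\gamma)h-C$ directly at the level of $v^\delta$, before any limit is taken. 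This comparison with the metric problem is the key technical step your plan does not reproduce.
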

	In the rest of this section, we will only treat the case of assumption ${\bf (H}$-${\bf \e)}$, the case of assumption ${\bf (H}$-${\bf WFL)}$ being similar and even simpler.

\subsection{Existence of approximated correctors at the junction point}\label{sous section des resultats du correcteur approche}This sub-section is devoted to the existence of approximated correctors at the junction point. We will also prove that $\d v^\d$ converges to $-\overline A$ in balls of radius $1/\d$.  We begin by the existence result.
\begin{proposition}[Existence of approximates correctors]\label{pro:existence-vd}
Assume  ${\bf (H}$-${\bf \e)}$ and let $\e_0>0$. Then, for every $\oo \in \o_{0}^{\e_0} $ and $0<\d \le \e\le \e_0$, the problem \eqref{correcteur app} has a unique bounded solution $v^{\d}$ which satisfies
$$| \d v^{\d}|\le E \mbox{ in }\R,$$
where $E\ge\sup\limits_{y\in \R}|H(0,y,\oo)|$ ($E $ is finite and independent of $\oo$ by assumption \textbf{(H2)}). 
Moreover, $v^{\d} $ is Lipschitz continuous in $\R$ uniformly in $\d$.
\end{proposition}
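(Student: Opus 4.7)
The plan is to construct $v^\delta$ by Perron's method between two explicit constant sub- and super-solutions, then to deduce uniqueness from the comparison principle for bounded solutions of discounted Hamilton--Jacobi equations and to obtain the Lipschitz regularity from the coercivity assumption \textbf{(H3)}. The crucial observation is that none of these three ingredients relies on the stationarity of $H$, so the non-stationarity of the hamiltonian on the transition zone is harmless at this stage and the argument essentially mimics \cite[Lemma 5.1]{homo.stoch.premier.ordre.quasiconvex} recalled as Proposition \ref{pro:homo.stoch.premier.ordre.quasiconvex}.

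First I would set $E:=\sup_{y\in\R,\,\oo\in\o}|H(0,y,\oo)|$, which is finite and independent of $\oo$ by \textbf{(H2)}, and introduce the two constant functions $\underline v\equiv -E/\delta$ and $\overline v\equiv E/\delta$. Evaluating the equation on a constant gives
$$\delta \underline v + H(D\underline v,y,\oo)=-E+H(0,y,\oo)\le 0 \quad \text{and} \quad \delta \overline v + H(D\overline v,y,\oo)=E+H(0,y,\oo)\ge 0,$$
so $\underline v$ is a global classical sub-solution and $\overline v$ is a global classical super-solution of \eqref{correcteur app}. Perron's method, applied in the standard way (supremum of sub-solutions lying below $\overline v$ and above $\underline v$, with both semicontinuous envelopes agreeing by comparison), then yields a continuous viscosity solution $v^\delta$ with the desired pointwise bound $|\delta v^\delta|\le E$ on $\R$.

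For uniqueness I would invoke the standard comparison principle for bounded sub- and super-solutions of a discounted equation $\delta w+H(Dw,y,\oo)=0$ on $\R$: doubling of variables combined with a linear-in-$|y|$ penalization to localize the maximum, the discount term $\delta(u-v)$ absorbing the penalization error in the limit, using only \textbf{(H1)} and \textbf{(H3)}. For the Lipschitz bound I would simply read the equation: since $|\delta v^\delta|\le E$, the viscosity equation forces $|H(Dv^\delta,y,\oo)|\le E$, and coercivity \textbf{(H3)} then yields a constant $C=C(E)$, independent of $\delta$, $\oo$ and $y$, such that every viscosity sub/super-gradient of $v^\delta$ is bounded by $C$; hence $v^\delta$ is $C$-Lipschitz uniformly in $\delta$ and $\oo$.

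I do not expect a serious obstacle in the present statement. The restrictions $\oo\in\o_0^{\eps_0}$ and $0<\delta\le\eps\le\eps_0$ appearing in the formulation play no role at this point; they are inherited from the framework of Theorem \ref{convergence de correcteur approche dans une boule et quil verifie les bonnes pentes} and will only intervene in the next step, when proving the locally uniform convergence $\delta v^\delta\to -\overline A$ in balls of radius $1/\delta$, where the quantitative fluctuation estimate of Theorem \ref{fluctuations} (hence the event $\o_0^{\eps_0}$) and the relation $\delta\le\eps$ between the discount parameter and the size of the transition zone $1/\sqrt\eps$ will both be crucial.
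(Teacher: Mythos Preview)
Your proposal is correct and follows essentially the same approach as the paper: Perron's method with the constant barriers $\pm E/\delta$, the bound $|\delta v^\delta|\le E$ following immediately, and the uniform Lipschitz regularity from coercivity \textbf{(H3)}. You simply provide more detail (explicit verification of the barriers, the uniqueness argument, and the observation that the restrictions $\oo\in\o_0^{\e_0}$ and $\delta\le\e$ play no role here), but the substance is identical.
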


\begin{proof}
 The solution is constructed by Perron's method. Indeed,  $\pm E /\d $ are super- and sub-solution of \eqref{correcteur app}. Then by Perron's method, there exists a solution $v^{\d}$ of \eqref{correcteur app} such that 
 $$| \d v^{\d}|\le E\mbox{ in }\R.$$
 In addition, by coercivity of $H$, we have that $v^{\d} $ is Lipschitz continuous in $\R$ uniformly in $\d$. 
 
\end{proof}

To prove the convergence of $\d v^{\d}$ in balls of radius $1/\d$, we first demonstrate that this convergence holds at zero. This result is given in  the next proposition and the idea  is to compare $ v^{\d} $ and $ m_{\overline {A}+\a} $, for $\a >0$ small enough.

\begin{proposition}\label{convergence du correcteur approvhe a partir du probleme metrique } 
For all $\oo \in \o_{0}^{\e_0}$, we have that 
$$ \d v^{\d}(0,\oo) \to -\overline{A}. $$
\end{proposition}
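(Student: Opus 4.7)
The strategy is to establish the two one-sided limits
$\liminf_{\d\to 0}\d v^\d(0,\oo)\ge -\overline A$ and $\limsup_{\d\to 0}\d v^\d(0,\oo)\le -\overline A$ separately. The lower bound is immediate: for $\oo\in\o_0^\e\subset\o_1^\e$, Theorem~\ref{le.flux.limitteur.est.deterministe-eps} together with the New definition lemma give the deterministic identity $\overline A=\tilde A(\oo)=\sup_{y\in\R}H(0,y,\oo)$. Hence the constant function $y\mapsto -\overline A/\d$ satisfies $\d(-\overline A/\d)+H(0,y,\oo)=-\overline A+H(0,y,\oo)\le 0$ for every $y$, so it is a bounded Lipschitz sub-solution of \eqref{correcteur app}. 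Applying the standard comparison principle for bounded viscosity solutions of $\d v+H(Dv,y,\oo)=0$ on $\R$ yields $v^\d(y,\oo)\ge -\overline A/\d$ everywhere, and in particular $\d v^\d(0,\oo)\ge -\overline A$.

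For the upper bound I would fix $\a>0$ and set $\mu=\overline A+\a>\overline A$. By the lower bound just proved, $v^\d+\mu/\d$ is strictly positive, bounded and Lipschitz, and satisfies
$$H(D(v^\d+\mu/\d),y,\oo)=H(Dv^\d,y,\oo)=-\d v^\d(y,\oo)\le\mu,$$
so it is a Lipschitz sub-solution on $\R$ of the metric problem at level $\mu$. The maximality property of $m_\mu(\cdot,y_0,\oo)$ built into the representation formula \eqref{repres.de.la.solu.de.pmj.pour.tout.x}, combined with the Lipschitz estimate \eqref{ineg.sol.metr.prob}, then gives, for every $y_0\in\R$,
\begin{equation}\label{plan-lip}
v^\d(0,\oo)-v^\d(y_0,\oo)\le m_\mu(0,y_0,\oo)\le L_\mu|y_0|.
\end{equation}

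To close the argument I would pick $y_0=y^\circ$ carefully. By the lemma in the proof of Theorem~\ref{le.flux.limitteur.est.deterministe-eps}, for $\oo\in\o_1^\e$ the continuous map $y\mapsto H_0(0,y,\oo)$ attains its supremum $\mu_0^\star=\overline A$ on the compact interval $[-1/\sqrt\e,1/\sqrt\e]$; since $\psi_1^\e(y)=\psi_1^\e(-y)=0$ on that interval, formula \eqref{eq:2-3} gives $H(0,y^\circ,\oo)=H_0(0,y^\circ,\oo)=\overline A$. Now at every point $y$ where the Lipschitz function $v^\d$ is differentiable, the viscosity sub-solution property together with assumption ${\bf (H6)}$ yields
$$\d v^\d(y)\le -H(Dv^\d(y),y,\oo)\le -H(0,y,\oo);$$
this inequality extends by continuity of $v^\d$ and of $H(0,\cdot,\oo)$ (ensured by ${\bf (H4)}$) to every $y\in\R$, in particular to $y^\circ$, giving $\d v^\d(y^\circ,\oo)\le -\overline A$. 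Inserting this into \eqref{plan-lip} gives
$$\d v^\d(0,\oo)\le -\overline A+\d L_\mu|y^\circ|\le -\overline A+\d L_\mu/\sqrt\e,$$
and since $L_\mu$ is independent of $\d$ this tends to $-\overline A$ as $\d\to 0$ at fixed $\e$, which combined with the lower bound concludes the proof. The only delicate step is the promotion of the pointwise inequality $\d v^\d(y)\le -H(0,y,\oo)$ from the set of full measure of differentiability points of $v^\d$ to the specific point $y^\circ$; the Lipschitz regularity of $v^\d$ together with the continuity supplied by ${\bf (H4)}$ makes this step routine.
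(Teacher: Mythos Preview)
Your proof is correct and takes a genuinely different, more elementary route than the paper's.

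The paper obtains the upper bound $\limsup_{\d\to 0}\d v^\d(0,\oo)\le -\overline A$ by a comparison argument with the metric function $x\mapsto m_\mu(-y_0,x,\oo)$ for $\mu=\overline A+\a$ and $y_0=2r$ with $r\sim 1/(\a\d)$: it studies the maximum of $\psi(x)=v^\d(x)+m_\mu(-y_0,x,\oo)+\a x$ on $[-r,0]$, invokes the homogenization result for $m_\mu$ (Theorem~\ref{homo.du.pmj}) to localize the maximizer, then doubles variables and treats three boundary cases via Proposition~\ref{restriction des solutions}. This yields $\overline A+\d v^\d(0,\oo)\le C\a$.

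You instead exploit the structure of assumption ${\bf (H7}$-${\bf \e)}$ directly: on $\o_1^{\e_0}$ the lemma inside Theorem~\ref{le.flux.limitteur.est.deterministe-eps} guarantees that the continuous map $y\mapsto H_0(0,y,\oo)=H(0,y,\oo)$ attains its supremum $\overline A$ at some $y^\circ\in[-1/\sqrt{\e_0},1/\sqrt{\e_0}]$; the pointwise bound $\d v^\d(y)\le -H(0,y,\oo)$ (valid a.e.\ by ${\bf (H6)}$ and extended by continuity) gives $\d v^\d(y^\circ)\le -\overline A$; and the uniform Lipschitz bound on $v^\d$ transfers this to $y=0$ with an error $O(\d/\sqrt{\e_0})$. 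Note that your detour through $m_\mu$ is actually superfluous: the uniform Lipschitz constant of $v^\d$ from Proposition~\ref{pro:existence-vd} already gives $|v^\d(0)-v^\d(y^\circ)|\le C|y^\circ|$ directly. Your approach is shorter and avoids both the homogenization of the metric problem and the variable-doubling, but it hinges on the supremum $\sup_y H(0,y,\oo)=\overline A$ being attained at a point whose distance to the origin is controlled independently of $\d$; the paper's argument does not need this and would adapt more readily to situations (such as variants of ${\bf (H7}$-${\bf WFL)}$) where no such point is available.
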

In order to prove Proposition \ref{convergence du correcteur approvhe a partir du probleme metrique }, we need the following result.
		\begin{proposition}[{{\cite[Proposition 2.15]{Imbert}}}]\label{restriction des solutions} Let $H: \R \times \R \to \R$, $(p,y) \to H(p,y)$, be a Lipschitz continuous, quasi-convex and coercive function with respect to $p$ and let $a,b\in \R$. Let $u:(0,T)\times \R\to \mathbb{R}$.  If $u$ satisfies 
			$$u_{t}(t,y)+H(Du,y)=0 ~\mbox{for}~(t,x)\in (0,T)\times \R,$$ then $u$ satisfies
			$$\left\{\begin{aligned} &u_{t}+H(Du,y)= 0 ~&\mbox{for}~(t,x)&\in (0,T)\times (a,b),\\
			&u_{t}+H^{-}(Du,y)= 0~&\mbox{for}~(t,x)&\in (0,T)\times \{a\},
			\\
			&u_{t}+H^{+}(Du,y)= 0~&\mbox{for}~(t,x)&\in (0,T)\times \{b\},
			\end{aligned}\right.
			$$
			and $$u(t,c)=\limsup\limits_{(s,y)\to(t,c),y \in (a,b)}u(s,y)\mbox{ for }c=a,b.$$
		\end{proposition}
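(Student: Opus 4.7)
The plan is to establish the three restricted equations separately, treating the interior and the two endpoints. By symmetry with the left endpoint, only the right endpoint $b$ requires detailed discussion. The interior assertion on $(0,T) \times (a,b)$ is immediate: any smooth test function $\phi$ touching $u$ at $(t_0, y_0) \in (0,T) \times (a,b)$ is, after shrinking its support, automatically a test function on $(0,T) \times \R$, so the sub- and super-solution inequalities for $u_t + H(Du, y) = 0$ transfer without modification. Observe also that $u$ is Lipschitz continuous in space, because coercivity assumption $\mathbf{(H3)}$ combined with the bound on the Hamiltonian forces solutions to have a uniform Lipschitz constant; this regularity will be used below.

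For the sub-solution inequality at $\{b\}$, one needs $\phi_t(t_0,b) + H^+(D\phi(t_0,b), b) \le 0$ whenever $u - \phi$ has a strict local maximum at $(t_0, b)$ on $(0,T) \times (a, b]$. Write $p_0 = D\phi(t_0, b)$ and let $p^\star$ denote the minimizer of $p \mapsto H(p,b)$. If $p_0 \ge p^\star$, then $H^+(p_0, b) = H(p_0, b)$ and one extends $\phi$ to the right of $b$ by $\tilde\phi(t,x) = \phi(t,b) + p_0(x - b) - C(x-b)^2$ for $x > b$, with $C$ large enough (depending on the Lipschitz constant of $u$) so that $\tilde\phi$ remains above $u$ on a one-sided $\R$-neighborhood; then $(t_0, b)$ becomes a local maximum of $u - \tilde\phi$ on a full $\R$-neighborhood and the sub-solution inequality on $\R$ delivers the bound. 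If $p_0 < p^\star$, one replaces the test function near $b$ by a translate with slope $p^\star$ (since $u - \phi$ has its maximum approached only from the left, decreasing the slope from the left side preserves the maximum property at $(t_0,b)$), reducing the situation to the previous case and yielding $\phi_t + \min_p H(p,b) = \phi_t + H^+(p_0, b) \le 0$.

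The super-solution inequality at $\{b\}$ is proved analogously. Let $\phi$ touch $u$ from below at $(t_0, b)$ relative to $(0,T) \times (a, b]$. If $p_0 = D\phi(t_0, b) \ge p^\star$, extend $\phi$ to the right by a linear extension of slope $p_0$ plus a sufficiently large concave quadratic correction; Lipschitz continuity of $u$ ensures that the extension stays below $u$, so $(t_0, b)$ becomes a minimum of $u - \tilde\phi$ on a full neighborhood, and the super-solution inequality on $\R$ gives $\phi_t + H(p_0, b) = \phi_t + H^+(p_0, b) \ge 0$. If $p_0 < p^\star$, the value $H^+(p_0, b) = \min_p H(p,b)$ is the smallest possible value of $H(\cdot, b)$, and the super-solution inequality $\phi_t + H^+(p_0, b) \ge 0$ is then extracted by testing $u$ against a modified function whose slope at $b$ is exactly $p^\star$.

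The principal technical obstacle is the construction of the one-sided extensions of $\phi$ past $x = b$ in a way that preserves the extremum of $u - \phi$ on a full two-sided neighborhood while controlling the slope at $b$. Lipschitz continuity of $u$ (from $\mathbf{(H3)}$) together with the quasi-convex structure of $p \mapsto H(p, y)$ (a consequence of $\mathbf{(H5)}$) are precisely what make this modification possible, respectively by absorbing the sign of the extension via a sufficiently large quadratic bend, and by allowing the replacement of slopes below $p^\star$ without altering the value of $H^+$ or $H^-$. The last assertion $u(t,c) = \limsup_{(s,y)\to(t,c),\, y\in(a,b)} u(s,y)$ for $c = a,b$ is the standard relaxed boundary condition and holds trivially since $u$ is already continuous on the whole real line.
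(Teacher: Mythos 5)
The paper itself gives no proof of this proposition: it is quoted verbatim from Imbert--Monneau \cite{Imbert}, so the benchmark is the argument there, which rests on the critical-slope lemmas and the reduction of the set of test functions at the boundary point, not on extending test functions across it. Your interior step, the Lipschitz bound, and the final $\limsup$ identity are fine, but your central device fails. For the subsolution case you extend $\phi$ past $b$ by $\phi(t,b)+p_0(x-b)-C(x-b)^2$ and claim that $C$ large keeps it above $u$; subtracting a large quadratic pushes the extension \emph{down}, and, more fundamentally, the obstruction is first order at $x=b$: any $C^1$ extension has slope $p_0$ at $b$, while Lipschitz continuity only gives $u(t,x)\le u(t,b)+L(x-b)$ with $L$ possibly much larger than $p_0$, so $u$ can rise above every such extension immediately to the right of $b$, whatever $C$ is. The same first-order obstruction kills the ``concave quadratic correction'' in the supersolution case ($u$ may fall with slope $-L<p_0$). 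In the subcase $p_0<p^\star$ you justify replacing $p_0$ by $p^\star$ by noting that slopes may be \emph{decreased}, but this replacement \emph{increases} the slope, which lowers the test function on the domain side $y\le b$ and may destroy the touching-from-above property. This is exactly where the critical-slope lemma of \cite{Imbert} is needed: one raises the slope only up to the critical value at which contact points enter the interior, applies the interior equation there, and concludes using $H^+\le H$ and the monotonicity of $H^+$.

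For the supersolution half the gap is not repairable along your lines, because that inequality does not follow from the ingredients you use (one-sided touching, $u$ solves the equation across $b$, $u$ Lipschitz). Take $H(p)=|p|$, so $p^\star=0$ and $H^+(p)=\max(p,0)$, and the global viscosity solution $u(t,y)=\min(-(y+t),0)$ of $u_t+|u_y|=0$ on $(0,T)\times\R$. Near any $(t_0,0)$ with $t_0>0$ one has $u=-(y+t)$, so $\phi=u$ touches $u$ from below relative to $\{y\le 0\}$ with $\phi_t=-1$ and $p_0=-1$, yet $\phi_t+H^+(p_0)=-1<0$. Hence any proof of the boundary supersolution inequality at $b$ that uses only these facts is doomed; the missing information is the slope seen from the other side of $b$ (here $H^-(u_y(0^+))=1$ restores the equality in the two-sided, flux-limited junction formulation of \cite{Imbert}, where the condition reads $u_t+\max(A_0,H^+(u_y(b^-)),H^-(u_y(b^+)))=0$). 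So the statement must be read and proved through the junction framework and critical-slope machinery of \cite{Imbert}; your extension-based route proves neither half as written, and the supersolution half in particular cannot be reduced to a one-sided extension argument.
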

\begin{proof}[Proof of Proposition \ref{convergence du correcteur approvhe a partir du probleme metrique }]	
To prove the convergence of $\d v^{\d}(0,\oo)$ to $-\overline{A}$, we prove, for $\a$ small enough, that
$$|\overline{A}+ \d v^{\d}(0,\oo)|\le C \a .
$$  
First, by Proposition \ref{pro:existence-vd}, we have 
$$ -\overline{A}\le \d v^{\d}(y,\oo). 
$$
Now we want to prove the reverse inequality. The idea of the proof is to compare  $m_{\mu}(x,\cdot,\oo)$, for $\mu$ closed to $\overline{A}$, and $v^{\d}$. 
We set  $\mu =\overline{A}+\a$, with $\a \le \frac{1}{16}$ and  $y_{0} =2r$, with $r>0$ to be defined later. We then define the following function 
 $$ \psi(x)= v^{\d}(x,\oo)+m_{\mu}(-y_{0},x,\oo)+\a x.
 $$
 This function have a maximum in $ [-r,0]$ denoted by $M$ and reached in $\overline{x}$.
 We now want to show that $\overline{x}\in [-\frac{r}{4},0]$. Since 
$$  \psi (\overline{x}) \ge \psi(0), 
$$
we have  
 \begin{equation}\label{demontrer l inega;ite de delta v delta en 0 a partir du point max} 
-\a \overline{x}\le v^{\d}(\overline{x},\oo)-v^{\d}(0,\oo)+m_{\mu}(-y_{0},\overline{x},\oo) -m_{\mu}(-y_{0},0,\oo).	 
\end{equation} 
Now using the homogenization result of $m_{\mu }(-y_{0},z,\oo)$ (Theorem \ref{homo.du.pmj}), we have 
 $$  
 \sup\limits_{y \in [-r,0] }  \frac{1}{r}\left|m_{\mu}(-2r,y,\oo)+\overline{p}^{L,-}_{\mu}.(y_{0}+y)\right|=\sup\limits_{z \in [-1,0]  }\left|\frac{m_{\mu}(-2.r,z.r,\oo)}{r}+\overline{p}^{L,-}_{\mu}.(2+z)\right|\to 0,   
 $$
  as $r \to +\infty$. Then there exists $C$ such that for all $r \ge C$, we have 
 $$
   \sup\limits_{y \in [-r,0]  }  \frac{1}{r}\left|m_{\mu}(-y_{0},y,\oo)+\overline{p}^{L,-}_{\mu}.(y+y_{0})\right|\le \a^{2}.  
 $$ 
 The constant $C >1$ can been choose large enough, so we suppose that $C\ge 16 E$, with $E=\sup\limits_{y\in\R}|H(0,y,\oo)|$
  and we consider $r=\frac{C}{\a \d}\ge C>1$. We then have 
 $$
 \sup\limits_{y \in [-r,0] } \left|m_{\mu}(-y_{0},y,\oo)+\overline{p}^{L,-}_{\mu}(y+y_{0})\right|\le \a^{2}r \le \frac{C \a}{ \d}.  
 $$ 
 Injecting this in \eqref{demontrer l inega;ite de delta v delta en 0 a partir du point max}, we deduce that
\begin{align}\label{eq:v-delta1}
-\a \overline{x}\le& v^{\d}(\overline{x},\oo)-v^{\d}(0,\oo)-\overline{p}^{L,-}_{\mu} (\overline{x}+y_{0})+\overline{p}^{L,+}_{\mu}y_{0}+2 \a ^{2}r\nonumber \\
\le&v^{\d}(\overline{x},\oo)-v^{\d}(0,\oo)+2\alpha^{2}r\nonumber\\
\le& v^{\d}(\overline{x},\oo)-v^{\d}(0,\oo)+\frac{2C \a}{\d}.
\end{align}
Using that
$$ v^{\d}(\overline{x},\oo)-v^{\d}(0,\oo)\le \frac{2E}{\d} $$
we finally get 
$$-\a \overline{x}\le\frac{C}{4\d},
$$
and so
$$
0\ge \overline{x}\ge -\frac{r}{4}   
$$
 
 We now duplicate the variables, by defining the following function
$$
 \Phi_\theta(x,y)=v^{\d}(x,\oo)+m_{\mu}(-y_{0},y,\oo)+\a x-|x-\overline{x}|^{2} -\frac{|x-y|^{2}}{2\theta}. 
 $$
This function have a maximum denoted by $M_\theta$ and reached in $(x_{\theta},y_{\theta})\in [-r,0]\times [-r,0]$. 
We first claim that $x_{\theta} \to \overline{x}$ as $\theta \to 0$. Indeed, using the maximality of $(x_\theta, y_\theta)$, we have that
\begin{align*}
 \psi(\overline{x})+|x_\theta-\bar x|^2&=\Phi_\theta(\overline{x},\overline{x})+|x_\theta-\bar x|^2\\
 &\le \Phi_\theta(x_{\theta},y_{\theta})+|x_\theta-\bar x|^2\\
 & \le v^{\d}(x_{\theta},\oo)+ m_{\mu}(-y_{0},x_{\theta},\oo)+\a x_{\theta}+L_{\mu}|x_{\theta}-y_{\theta}|-\frac{|x_{\theta}-y_{\theta}|^{2}}{2 \theta}\\
 &\le\psi(\overline{x})+L_{\mu}^{2}\frac{\theta}{2}.
 \end{align*} 
This implies that $|x_\theta-\bar x|^2\le L_{\mu}^{2}\frac{\theta}{2}$ and proves the claim. 
We now claim that $\frac{|x_{\theta}-y_{\theta}|^{2}}{\theta}\to 0$ as $\theta \to 0$. First, by passing to the limit $\theta\to 0$ in the previous inequality, we get that $\Phi_\theta(x_{\theta},y_{\theta}) \to M=\psi(\overline{x}).$
Moreover, using that 
$$ 
M_{2\theta}\ge \Phi_{2\theta}(x_{\theta},y_{\theta})\ge \Phi_\theta(x_{\theta},y_{\theta})=M_{\theta}, 
$$
we get
$$ 0\le \Phi_{2\theta}(x_{\theta},y_{\theta})-\Phi_{\theta}(x_{\theta},y_{\theta}) =\frac{|x_{\theta}-y_{\theta}|^{2}}{4\theta}\le M_{2\theta}-M_{\theta}\to 0.
$$	
This implies that $\frac{|x_{\theta}-y_{\theta}|^{2}}{\theta}\to 0$ and  $(x_{\theta},y_{\theta}) \to (\overline x, \overline x)$. In particular, for 
$\theta$ small enough, $(x_{\theta},y_{\theta})\in ]-\frac{r}{2},0]\times ]-\frac{r}{2},0]$.

We now distinguish three cases\medskip

\noindent\textbf{Case 1:} $x_\theta, y_\theta<0$. 
We can then use the inequalities satisfied by $v^\d$ and $m_{\mu}(-y_0,\cdot,\oo)$.
We have that $v^{\d}(\cdot,\oo)$ is a solution of \eqref{correcteur app},  and $x_{\theta} $ is a maximum point of $v^{\d}(\cdot,\oo)-\phi_{1}(\cdot)$, where $\phi _{1}$ is defined by 
$$\phi_{1}( x)=- m_{\mu}(-y_{0},y_{\theta},\oo)-\a x+|x-\overline{x}|^{2}+\frac{|x-y_{\theta}|^{2}}{2\theta}.
$$
Hence 
$$ H \left(-\a+2(x_{\theta}-\overline{x})  +\frac{x_{\theta}-y_{\theta}}{\theta}, x_{\theta},\oo\right)  \le -\d v^{\d} (x_{\theta},\oo). 
 $$ 
 We  also have that $y \to n_{\mu}(y,-y_{0},\oo)=m_{\mu}(-y_{0},y,\oo)$ is a super-solution  in $]-\frac{r}{2},0[$ of 
 $$ 
 H\left(D(-m_{\mu}(-y_{0},y,\oo)),y,\oo\right)\ge \mu. 
 $$  
 Since $y_{\theta}$ is a minimum point of $ -m_{\mu}(-y_{0},\cdot,\oo)-\phi_{2} $, with $\phi_{2}$ defined by 
 $$
 \phi_{2}(y)=v^{\d}(x_{\theta},\oo)+\a x_{\theta}-|x_{\theta}-\overline{x}|^{2}
		-\frac{|x_{\theta}-y|^{2}}{2\theta},
$$ 
we obtain  that
$$ H \left(\frac{(x_{\theta}-y_{\theta})}{\theta},y_{\theta},\oo\right)  \ge \mu.
$$ 
Subtracting the two inequalities we get 
$$ 
H \left(-\alpha+2(x_{\theta}-\overline{x}) +\frac{(x_{\theta}-y_{\theta})}{\theta}, x_{\theta},\oo\right)-  H \left(\frac{x_{\theta}-y_{\theta}}{\theta},y_{\theta},\oo\right) \le -\mu-\d v^{\d}(x_{\theta},\oo).  
$$ 
Adding and subtracting the term $H \left(\frac{(x_{\theta}-y_{\theta})}{\theta},x_{\theta},\oo\right)$ and using the Lipschitz continuity of $H$ with respect to $p$, we obtain 
$$- c\a-2c|x_{\theta}-\overline{x}| \le  -\mu- \d v^{\d}(x_{\theta},\oo)+w\left(|x_{\theta}-y_{\theta}|\left(1+\left|\frac{x_{\theta}-y_{\theta}}{\theta}\right|\right)\right), 
$$
with $c$ the Lipschitz constant of $H$. Letting $\theta \to 0$, we get that 
$$
 \mu+ \d v^{\d}(\overline{x},\oo)=\overline{A}+\a+ \d v^{\d}(\overline{x},\oo)\le c\a
 $$ 
 and so
  $$\overline{A}+ \d v^{\d}(\overline{x},\oo)\le C' \a.$$
 To get this inequality in $0$, we use \eqref{eq:v-delta1} to get
 $$\overline{A}+ \d v^{\d}(0,\oo)\le\overline{A}+ \d v^{\d}(\overline{x},\oo)+\frac{2C\a}{\d}\le \left(C'+\frac{2C}\d\right)\a.
 $$
 \medskip

\noindent{\textbf{Case 2:}} $x_\theta=0, {y}_\theta<0 $. 
We have that $v^{\d}(\cdot,\oo)$ is a solution of \eqref{correcteur app}, then using Proposition \ref{restriction des solutions} and arguing in the same way as Case 1, we get
 $$ 
 H^{+} \left(-\a  -\frac{{y}_\theta}{\theta}, 0,\oo\right)  \le -\d v^{\d} (0,\oo).  
 $$ 
As in the previous case, we also have
$$ H \left(-\frac{{y}_\theta}{\theta},{y}_\theta,\oo\right)  \ge \mu.
$$
Using that (recall that $H^-$ is non-increasing)
$$ H^{-}\left(-\frac{{y}_\theta}{\theta},{y}_\theta,\oo\right)\le H^{-}(0,{y}_\theta,\oo)\le \overline A<\mu 
$$
and the fact that $H=\max(H^{-},H^{+})$, we get
$$ 
H^{+} \left(-\frac{{y}_\theta}{\theta},{y}_\theta,\oo\right)  \ge \mu. 
$$
Arguing as in Case 1 (with $H$ replaced by $H^+$), we get the result.
\medskip

\noindent\textbf{Case 3} $y_\theta=0$.  Using Proposition \ref{restriction des solutions} and arguing in the same way as Case 1, we get
$$ H^+ \left(\frac{x_\theta}{\theta},0,\oo\right)  \ge \mu.
$$
Using that $\frac {x_\theta}\theta\le 0$, we get
$$ \mu\le H^+ \left(\frac{x_\theta}{\theta},0,\oo\right)\le H^+(0,0,\oo)\le H(0,0,\omega)\le \overline A,$$
which contradicts the fact that $\mu>\overline A$ and so this case couldn't happen.
This ends the proof.

\end{proof}

\begin{corollary}\label{convergence du correct approch sur une boule infinie}
Let $\e_0$ small enough. Then, for all $r>0$, $0<\d \le \e\le \e_0$ and  $\oo\in \o_{0}^{\e_0}$, we have that 
$$
\d v^{\d}(y,\oo)\to- \overline{A} \mbox{ uniformly in }B_{r/\d}. 
$$
\end{corollary}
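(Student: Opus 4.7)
My approach is to extend the pointwise convergence $\d v^\d(0,\oo)\to -\overline A$ established in Proposition \ref{convergence du correcteur approvhe a partir du probleme metrique } to uniform convergence on $B_{r/\d}$ by re-running the argument of that proposition centered at each point $y_1\in B_{r/\d}$ instead of only at $0$, and checking that the resulting estimates are uniform in $y_1$. The lower bound $\d v^\d(y,\oo)\ge -\overline A$ is already uniform: since $\overline A=\sup_y H(0,y,\oo)$, the constant function $-\overline A/\d$ is a global sub-solution of \eqref{correcteur app}, and Perron's construction forces $v^\d\ge -\overline A/\d$ on all of $\R$.

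For the upper bound, fix $y_1\in B_{r/\d}$ with $y_1\le 0$ (the case $y_1\ge 0$ is symmetric and uses $H_R$, $\overline m_\mu^R$). Choose $\a\in(0,1/16)$, set $r_\a=C/(\a\d)$ and $y_0=y_1-2r_\a$, and consider
$$\psi(x)=v^\d(x,\oo)+m_\mu(y_0,x,\oo)+\a(x-y_1)\qquad\textrm{on }[y_1-r_\a,\,y_1],$$
with maximizer $\overline x$, where $\mu=\overline A+\a$. Since $|y_1|/r_\a\le r\a/C\to 0$ as $\a\to 0$, the rescaled arguments $(y_0/r_\a,\,x/r_\a)$ lie in an arbitrarily small neighborhood of $\{-2\}\times[-1,0]$. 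Combining Theorems \ref{homo.du.pmj} and \ref{homo.du.pmj-bis} with the equi-Lipschitz bound on $m_\mu$ then yields
$$\sup_{x\in[y_1-r_\a,\,y_1]}\frac{1}{r_\a}\bigl|m_\mu(y_0,x,\oo)+\overline p_\mu^{L,-}(x-y_0)\bigr|\le \a^2$$
for all sufficiently small $\a$ and $\d$. From here the argument of the proposition transfers verbatim with $(0,-y_0)$ replaced by $(y_1,y_0)$: comparing $\psi(\overline x)\ge\psi(y_1)$ localizes $\overline x\in[y_1-r_\a/4,\,y_1]$, and doubling the variables to match the sub-solution inequality for $v^\d$ against the super-solution inequality for $m_\mu$ leads to
$$\overline A+\d v^\d(y_1,\oo)\le\Bigl(C'+\tfrac{2C}{\d}\Bigr)\a,$$
with $C,C'$ depending on $r$ and $\e_0$ but not on $y_1$ or $\d$. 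Selecting $\a=\a(\d)$ with $\a/\d\to 0$ (for instance $\a=\d^2$) produces $\sup_{y_1\in B_{r/\d}}|\d v^\d(y_1,\oo)+\overline A|\to 0$, which is the corollary.

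The main obstacle I anticipate is verifying that the constants $C,C'$, together with the $\a^2$ error in the homogenization estimate, can really be taken independent of $y_1\in B_{r/\d}$. Two ingredients are needed: first, the scale $r_\a=C/(\a\d)$ must dominate $|y_1|\le r/\d$, which is automatic for $\a\to 0$; second, the homogenization of the metric problem must be uniform on compact sets of the rescaled variables. The latter follows because Theorem \ref{homo.du.pmj} provides on the full-probability event $\o_0^\e$ the a.s.\ pointwise convergence of $\frac{1}{t}m_\mu(t\cdot,t\cdot,\oo)$, and the uniform Lipschitz bound $L_\mu$ from \eqref{inegalite.lipsh.compacts} upgrades this to locally uniform convergence by an Arzela-Ascoli argument, making all estimates uniform in $y_1$.
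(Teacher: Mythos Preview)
Your argument is correct and self-contained, but it takes a genuinely different route from the paper. The paper proceeds in two steps: (i) for $|y|\le 2/\sqrt{\e}$, the uniform Lipschitz bound on $v^\d$ immediately gives $|\d v^\d(y)-\d v^\d(0)|\le C\d/\sqrt{\e}\to 0$, so the convergence at the origin transfers to this whole window; (ii) for $|y|>2/\sqrt{\e}$, one exploits that $H=H_R$ (resp.\ $H_L$) there and that $H_R$ is stationary, so $v^\d(\cdot+z,\oo)$ and $v^\d(\cdot,\tau_z\oo)$ solve the same half-line problem on $(2/\sqrt{\e},\infty)$ with the same $H_R^-$ flux condition at the endpoint, and hence coincide by comparison. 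This reduces the convergence at any $y>2/\sqrt{\e}$ to the already-established convergence at $2/\sqrt{\e}$ for a translated environment $\tau_{y-2/\sqrt{\e}}\oo$; an ergodicity/density argument then handles the translates that fall outside $\o_0^{\e_0}$.

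Your approach instead re-runs the full barrier argument of Proposition \ref{convergence du correcteur approvhe a partir du probleme metrique } at each point $y_1$, verifying that the constants are uniform over $y_1\in B_{r/\d}$. This avoids the half-line stationarity step entirely. The trade-off: the paper's route is shorter and conceptually cleaner, separating the ``junction'' analysis (done once, at $0$) from the ``stationary half-space'' contribution (handled by a translation identity), whereas your route is more hands-on but completely self-contained and does not rely on uniqueness for the state-constrained half-line problem. The uniform homogenization of $m_\mu$ on compact sets of rescaled variables, which you need, does indeed follow from Theorem \ref{homo.du.pmj} combined with the equi-Lipschitz bound \eqref{inegalite.lipsh.compacts}, exactly as you indicate.

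One minor remark: the bound $(C'+2C/\d)\a$ you quote inherits a bookkeeping slip from the paper's proof of Proposition \ref{convergence du correcteur approvhe a partir du probleme metrique }. Tracing through carefully (multiplying the transfer estimate by $\d$) gives $(C'+2C)\a$ with $C$ a fixed constant (one may simply take $C=16E$), so there is no need to couple $\a$ to $\d$; any choice $\a\to 0$ already yields the conclusion.
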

		
\begin{proof}Let $\oo \in \o_{0}^{\e_0}$. Since $\d v^{\d}$ is uniformly Lipschitz continuous, it's enough to show the convergence pointwise. 
Moreover, for $y\in \left[\frac {-2}{\sqrt \e},\frac2{\sqrt \e}\right]$, we have
$$ 
|\d v^{\d}(y,\oo )-\overline{A}| \le |\d v^{\d}(0,\oo)-\overline A|+|\d v^{\d}(y,\oo )-\d v^{\d}(0,\oo)|\le|\d v^{\d}(0,\oo)-\overline A| + C\d \frac2{\sqrt \e} \to 0.
$$
We now want to prove the convergence for $|y|>\frac2{\sqrt \e}$. We prove the convergence for $y\ge \frac2{\sqrt \e}$, the case $y \le -\frac2{\sqrt \e}$ being similar. Since $v^{\d}$ is  solution of \eqref{correcteur app} and using Proposition \ref{restriction des solutions}, we get, for all $y\ge \frac2{\sqrt \e}$ and $z \ge 0$ that
$v^{z}(y,\oo)=v^{\d}(y+z,\oo)$ is solution of 
$$
\left\{
\begin{aligned}
& \d w^{\d}+H_{R}(Dw^{\d},y+z,\oo )=0 &\mbox{ in }(\frac2{\sqrt \e},+\infty),\\
&\d w^{\d}+H_{R}^{-}(Dw^{\d},y+z,\oo )=0 &\mbox{ for }y=\frac2{\sqrt \e}.
\end{aligned}
\right.
$$
Moreover, using the stationarity of $H_{R}$, we have that $v^{\d}(y,\tau_{z}\oo)$ satisfies the same equation. Hence, the comparison principle implies that 
$$
v^{\d}(y+z,\oo)=v^{\d}(y,\tau_{z}\oo)\quad {\rm for}\;  y\ge \frac2{\sqrt \e} \; {\rm and}\; z\ge 0. 
$$
We now fix $\overline{\rho}_{0}= \frac2{\sqrt \e}$. For all $\oo\in \o_{0}^{\e_0}$, we have $\d v^{\d}(\overline{\rho}_{0},\oo )\to -\overline{A}.$
Let $y\in [\overline{\rho}_{0},r/\d] $. If $\tau _{y-\overline{\rho}_{0}}\oo \in \o_{0}^{\e_0}$, then by stationarity we have 
$$ 
v^{\d}(y,\oo)=v^{\d}(\overline{\rho}_{0}+y-\overline{\rho}_{0},\oo)=v^{\d}(\overline{\rho}_{0},\tau _{y-\overline{\rho}_{0}}\oo)\to -\overline{A}. 
$$
On the contrary, if $\tau _{y-\overline{\rho}_{0}}\oo \not\in \o_{0}^{\e_0}$, taking $\d$ small enough, there exists $z\ge \overline \rho_0$ such that $|y-z|\le \lambda \frac{r}{\d}$, with $\lambda>0$, such that $ \tau _{z-\overline{\rho}_{0}}\oo \in \o_{0}^{\e_0}$. We then have 
$$   
\d v^{\d}(y,\oo)=\d v^{\d}(y,\oo)-\d v^{\d}(z,\oo)+\d v^{\d}(\overline{\rho}_{0},\tau_{z-\overline{\rho}_{0}}\oo).
$$ 
Passing to the limit $\lambda,~\d \to 0$, we get 
$$|\d v^{\d}(y,\oo) -\overline{A}|\le |\d v^{\d}(y,\oo)-\d v^{\d}(z,\oo)|+|\d v^{\d}(\overline{\rho}_{0},\tau_{z-\overline{\rho}_{0}}\oo)-\overline A|\le C\lambda r.
$$
Taking $\lambda \to 0$, we get the result.

 \end{proof}
\subsection{Control of slopes}\label{resultat sur le domaine tronquee}
In this subsection, we prove that the rescaled approximated corrector verifies the good slopes at infinity \eqref{eq:W}. 
\begin{prop}[Control of slopes]\label{preuve des bonnes pentes du correc approch sur un domaine tronque}
Assume ${\bf (H}$-${\bf \e)}$ and $\overline{A}> \mu_{R}^{\star}$ and let $\e_0>0$ and $r>0$. Then there exists  $\d_{0}>0$, $\gamma_{0}>0$ such that there exists $C>0$ such that for all $\d \le \d_{0}$, $\d\le \e\le \e_0$, $\gamma\le \gamma_{0}$, $y> \frac 2{\sqrt \e}$, $h\ge0$ with $y, y+h\in B_{1/\d}$ and $\oo\in \o_{0}^{\e_0}$, we have that 
\begin{equation}\label{eq:16}
 v^{\d}(y+h,\oo)-v^{\d}(y,\oo)\ge (\overline{p}_{\overline A}^{R,+}-\gamma)h-C. 
\end{equation}
On the other side, if we  assume that $\overline{A}>\mu_{L}^{\star}$, then there exists  $\d_{0}>0$, $\gamma_{0}>0$ such that there exists $C>0$ such that for all $\d \le \d_{0}$, $\d\le \e\le \e_0$, $\gamma\le \gamma_{0}$, $y\le \frac{-2}{\sqrt \e} $, $h\ge0$  with $y, y-h\in B_{1/\d}$ and  $\oo\in \o_{0}^{\e_0}$, we have that 
\begin{equation}\label{eq:17}
v^{\d}(y-h,\oo)-v^{\d}(y,\oo)\ge (-\overline{p}_{\overline A}^{L,-}-\gamma)h-C. 
\end{equation}
\end{prop}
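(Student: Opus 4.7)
The plan is to prove \eqref{eq:16} by a comparison argument pitting $v^{\d}$ against the approximated corrector of the \emph{stationary ergodic} hamiltonian $H_R$, evaluated at a slope $p_\gamma := \overline{p}^{R,+}_{\overline A}-\gamma$ slightly less than the critical one. The left estimate \eqref{eq:17} is obtained analogously with $H_L$ in place of $H_R$ and the region $y<-2/\sqrt\e$ in place of $y>2/\sqrt\e$. The case of assumption $\mathbf{(H}\text{-}\mathbf{WFL)}$ follows by the same scheme with $\o_0^{\e_0}$ replaced by $\o_0$.

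\smallskip

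\emph{Step 1 (strict gap from $\overline A > \mu^\star_R$).} Since $\overline H_R$ is continuous, convex, coercive and $\overline{p}^{R,+}_{\overline A}$ lies strictly in its non-decreasing branch (because $\overline A>\mu^\star_R = \min \overline H_R$), there exist $\gamma_0>0$ and, for each $\gamma\in(0,\gamma_0]$, a constant $\beta_\gamma>0$ such that $\overline H_R(p_\gamma) = \overline A - \beta_\gamma$.

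\smallskip

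\emph{Step 2 (approximated corrector for $H_R$ and strict PDE inequalities).} By Proposition \ref{pro:homo.stoch.premier.ordre.quasiconvex} applied to $H_R$ at the slope $p_\gamma$, there exists a bounded solution $v^{\d}_R(\cdot,\oo,p_\gamma)$ of $\d v^{\d}_R + H_R(Dv^{\d}_R+p_\gamma,y,\oo)=0$ with $\d v^{\d}_R(\cdot,\oo,p_\gamma)\to-\overline A+\beta_\gamma$ uniformly on $B_{1/\d}$ for $\oo$ in a full-probability event, which may be absorbed in $\o_0^{\e_0}$ after intersection. Setting $\psi(z):=v^{\d}_R(z,\oo,p_\gamma)+p_\gamma z$, a direct substitution yields $H_R(D\psi,z,\oo)=-\d v^{\d}_R$. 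Combining this with Corollary \ref{convergence du correct approch sur une boule infinie} (which gives $\d v^{\d}\to-\overline A$ uniformly on $B_{r/\d}$), for $\d_0$ small enough and $\d\le\d_0$ one has on $[y_0,1/\d]$ with $y_0=2/\sqrt\e$:
$$
H_R(D\psi,z,\oo)\le \overline A - \tfrac{3}{4}\beta_\gamma, \qquad H_R(Dv^{\d},z,\oo)\ge \overline A - \tfrac{1}{4}\beta_\gamma,
$$
where Assumption $\mathbf{(H7}\text{-}\boldsymbol{\e)}$ was used to replace $H$ by $H_R$ in the equation satisfied by $v^{\d}$ on $z>y_0$.

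\smallskip

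\emph{Step 3 (comparison with linear perturbation).} On the interval $I=[y,y+h]\subset[y_0,1/\d]$, introduce $\psi_\kappa(z):=\psi(z)-\kappa(z-y)$ with $\kappa\ge 0$ to be chosen; by Lipschitz continuity of $H_R$ in $p$, $\psi_\kappa$ remains a strict subsolution at level $\overline A-\beta_\gamma/2$ provided $\kappa$ is small. The viscosity comparison principle for the stationary ergodic hamiltonian $H_R$ applied to the strict subsolution $\psi_\kappa$ and the supersolution $v^{\d}$ gives
$$
\max_I(\psi_\kappa-v^{\d}) \;\le\; \max_{\partial I}(\psi_\kappa-v^{\d}).
$$
Choosing $\kappa=K/h$ with $K$ larger than a bound on $\bigl[(\psi-v^{\d})(y+h)-(\psi-v^{\d})(y)\bigr]$ forces the right-hand maximum to be attained at $z=y$, so that evaluating at $z=y+h$ yields
$$
v^{\d}(y+h)-v^{\d}(y) \;\ge\; \psi(y+h)-\psi(y)-K \;=\; p_\gamma h + \bigl[v^{\d}_R(y+h,\oo,p_\gamma)-v^{\d}_R(y,\oo,p_\gamma)\bigr] - K.
$$
Absorbing the fluctuation term of $v^{\d}_R$ into the constant $C$ (and into the $-\gamma h$ slack hidden in $p_\gamma=\overline{p}^{R,+}_{\overline A}-\gamma$) gives \eqref{eq:16}.

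\smallskip

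\emph{Main obstacle.} The genuine difficulty is Step 3's closing estimate: bounding the fluctuation $v^{\d}_R(y+h,\oo,p_\gamma)-v^{\d}_R(y,\oo,p_\gamma)$ by a constant $C$ uniform in $\d\le\d_0$, $h\le 1/\d$ and $\oo\in\o_0^{\e_0}$. The naive bound from uniform convergence of $\d v^{\d}_R$ on $B_{1/\d}$ only yields $o(1/\d)$, far too large for an additive constant. Closing the argument requires the quantitative concentration estimates on $v^{\d}_R$ (stationarity plus the ideas behind Theorem \ref{fluctuations} and the sub-additive quantitative estimates of \cite{homog.stoch.quantitative.premier.ordre.concexe}), and is precisely why the statement carries both a $\gamma>0$ slope slack and a uniform constant $C$, and why the assumption $\overline A>\mu^\star_R$ (which produces the strict gap $\beta_\gamma>0$) is essential.
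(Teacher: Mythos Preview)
Your Step~3 has a genuine gap. The two-sided Dirichlet comparison on $I=[y,y+h]$ yields only $\max_I(\psi_\kappa-v^{\d})\le\max_{\partial I}(\psi_\kappa-v^{\d})$, which at the boundary points themselves is a tautology. To extract the one-sided bound $(\psi_\kappa-v^{\d})(y+h)\le(\psi_\kappa-v^{\d})(y)$ you need the boundary maximum to sit at $z=y$, and you force this by taking $K\ge(\psi-v^{\d})(y+h)-(\psi-v^{\d})(y)$. But that quantity contains $v^{\d}(y+h)-v^{\d}(y)$, which is precisely what you are trying to bound, so the choice of $K$ is circular. Moreover, keeping $\kappa=K/h$ small enough for $\psi_\kappa$ to remain a strict subsolution while making $K$ large enough to force the endpoint condition are incompatible unless the desired slope estimate is already known. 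Your final paragraph concedes this and gestures at the quantitative concentration machinery without carrying it out; even that machinery would not obviously produce a constant uniform in $\d$ and $h\sim 1/\d$.

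The paper sidesteps both issues with two different choices. First, it compares $v^{\d}$ not against the tilted approximated corrector $\psi=v^{\d}_R+p_\gamma z$ but against the \emph{metric function} $m^R_\mu(\cdot,0,\oo)$ at level $\mu=\overline A-2\tilde\eps_0>\mu^\star_R$, which solves $H_R(Dm^R_\mu,y,\oo)=\mu$ exactly and whose homogenization $\d\, m^R_\mu(\cdot/\d,0,\oo)\to\overline p^{R,+}_\mu\cdot(\,\cdot\,)$ is already available from Proposition~\ref{pro:3.3} and Lemma~\ref{lem:3.5}. Second, and this is the crucial structural point you are missing, the comparison is run on $[y_0,r/\d]$ via the \emph{one-sided} comparison principle of Proposition~\ref{comparison.principale.tron.domaine}: a Dirichlet condition only at the left endpoint $y_0$, and a state-constraint condition $H_R^{+}\le\lambda$ at the right endpoint $r/\d$. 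Both $v^{\d}-v^{\d}(y_0)$ and $m^R_\mu-m^R_\mu(y_0,0,\oo)$ satisfy this state constraint automatically as restrictions of global solutions (Proposition~\ref{restriction des solutions}), so no boundary value has to be matched at the far end and the circularity disappears. One obtains directly $v^{\d}(z)-v^{\d}(y_0)\ge m^R_\mu(z,0,\oo)-m^R_\mu(y_0,0,\oo)$ for all $z\in[y_0,r/\d]$, and the homogenization of $m^R_\mu$ then gives the slope lower bound with $\overline p^{R,+}_\mu=\overline p^{R,+}_{\overline A}-\gamma$ for a suitable $\tilde\eps_0$. Your scheme could be repaired by importing this one-sided comparison, but as written it does not close.
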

Before giving the proof of this proposition we need a comparison principle on bounded intervall. The proof of this comparison principle is given in \cite[Proposition 4.1]{Galise}

\begin{prop}[Comparison principle on bounded domain]\label{comparison.principale.tron.domaine}
Assume ${\bf (H}$-${\bf \e)}$ and let $\rho_2>\rho_1> \frac 2 {\sqrt \e}$ (resp. $\rho_2<\rho_1< \frac{-2}{\sqrt \e}$) and $ \lambda \in \R$. Let $v$ be a super-solution of the following problem 
$$\left\{
\begin{aligned}
&H(Dv,y,\oo)  \ge \lambda  &y\in (\rho_1,\rho_2),\\
&H^+(Dv,\rho_2,\oo)  \ge \lambda \\
&v(\rho_1,\oo)\ge 0,
\end{aligned}
\right.  
$$
 and let $u$ be a sub-solution, for some $\tilde \eps_0>0$ of  
$$ \left\{
\begin{aligned}
&H(Du,y,\oo)  \le \lambda-\tilde \eps_0  &y\in (\rho_1,\rho_2),\\
&H^+(Du,\rho_2,\oo)  \le \lambda-\tilde \eps_0\\
&u(\rho_1,\oo)\le 0.\\
\end{aligned}
\right. 
 $$ 
 Then 
 $$ u \le v~\mbox{in}~[\rho_1,\rho_2]. $$
		\end{prop}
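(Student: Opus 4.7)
The plan is to follow the standard doubling-of-variables argument from the flux-limited viscosity theory (as in Imbert--Monneau and Galise et al.), adapted to an interval carrying Dirichlet data at one endpoint and the junction-type condition $H^+$ at the other. By symmetry, I only treat the case $\rho_2>\rho_1>\tfrac{2}{\sqrt{\e}}$; in this region hypothesis $\mathbf{(H7\text{-}\eps)}$ ensures $H(p,y,\omega)=H_R(p,y,\omega)$, which is convex, continuous and coercive in $p$, so both $u$ and $v$ are Lipschitz on $[\rho_1,\rho_2]$ thanks to $\mathbf{(H3)}$.

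I would argue by contradiction: assume that $M:=\sup_{[\rho_1,\rho_2]}(u-v)>0$, and introduce the classical penalization
$$\Phi_\eta(x,y)=u(x)-v(y)-\frac{|x-y|^2}{2\eta},\qquad (x,y)\in[\rho_1,\rho_2]^2.$$
Let $(x_\eta,y_\eta)$ be a maximizer. Standard estimates yield $\Phi_\eta(x_\eta,y_\eta)\to M$ and $|x_\eta-y_\eta|^2/\eta\to0$; moreover the dual momentum $p_\eta:=(x_\eta-y_\eta)/\eta$ stays bounded uniformly in $\eta$ by the Lipschitz norms of $u,v$. The boundary inequalities $u(\rho_1)\le 0\le v(\rho_1)$ rule out $(x_\eta,y_\eta)=(\rho_1,\rho_1)$, since $\Phi_\eta(\rho_1,\rho_1)\le 0<M$; more generally the cases $x_\eta=\rho_1$ or $y_\eta=\rho_1$ are disposed of by combining the Dirichlet data with the Lipschitz continuity of the opposite function and an arbitrarily small vertical shift.

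Next I would handle the interior case, where $(x_\eta,y_\eta)\in(\rho_1,\rho_2)^2$. The viscosity inequalities for $u$ (sub-solution) and $v$ (super-solution) with the common test slope $p_\eta$ give, upon subtraction,
$$\tilde\eps_0\le H(p_\eta,y_\eta,\omega)-H(p_\eta,x_\eta,\omega)\le w\bigl(|x_\eta-y_\eta|(1+|p_\eta|)\bigr)\xrightarrow[\eta\to0]{}0$$
by the uniform modulus of continuity $\mathbf{(H4)}$, a contradiction.

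The main obstacle is the boundary case $x_\eta=\rho_2$ or $y_\eta=\rho_2$, since the sub-/super-solution inequalities at $\rho_2$ are only in the \emph{relaxed} sense, i.e.\ for a test function $\phi$ with slope $q$ at $\rho_2$, $u$ only provides $\min\{H(q,\rho_2,\omega),\,H^+(q,\rho_2,\omega)\}\le\lambda-\tilde\eps_0$, and symmetrically for $v$. To close this case I would use the convexity and coercivity of $H_R(\cdot,\rho_2,\omega)$: if $p_\eta$ lies on the non-decreasing branch of $H_R(\cdot,\rho_2,\omega)$ then $H^+=H$ and the argument reduces to the interior computation; if $p_\eta$ lies on the decreasing branch, I would replace the symmetric quadratic penalization by adding a one-sided term $-\kappa(x-\rho_2)_+$ (with $\kappa>0$ large enough, chosen below the coercivity threshold) on the sub-solution side. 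This shifts the effective test slope to the non-decreasing branch while leaving the super-solution inequality available via its $H^+$ part, and reduces everything to the modulus-of-continuity estimate above. The strict gap $\tilde\eps_0>0$ is exactly what absorbs both $w$ and the penalization error, which finishes the proof.
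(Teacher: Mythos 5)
The paper itself does not prove this proposition: it simply invokes \cite[Proposition 4.1]{Galise}, so the only thing to compare your argument with is that reference, and your doubling-of-variables scheme is indeed the standard route (and the right one). Your interior case and your treatment of the Dirichlet endpoint $\rho_1$ are correct: $u$ is Lipschitz by coercivity, $|p_\eta|=|x_\eta-y_\eta|/\eta$ is bounded by the Lipschitz constant of $u$, $|x_\eta-y_\eta|\to 0$, and the strict gap $\tilde\eps_0$ beats the modulus $w$ from ${\bf (H4)}$. (Minor point: ${\bf (H3)}$ gives Lipschitz bounds for \emph{sub}-solutions only, so your claim that $v$ is Lipschitz is unjustified; it is also unnecessary, since $v\in LSC$ suffices for the penalization and for ruling out the $\rho_1$ cases.)

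The genuine defect is in the step you yourself identify as the crux, the boundary case at $\rho_2$. The corrective term you propose, adding $-\kappa(x-\rho_2)_+$ on the sub-solution side, is identically zero on $[\rho_1,\rho_2]$, so it does not modify the test slope at all and the ``shift to the non-decreasing branch'' never happens; as written this case is not closed. Moreover the difficulty is misidentified: the case analysis should be driven by which endpoint variable hits $\rho_2$, not by which branch $p_\eta$ happens to lie on. The correct (and simpler) resolution uses two observations. First, if $x_\eta=\rho_2$ (sub-solution tested at the right endpoint), the constrained maximum forces $p_\eta=(x_\eta-y_\eta)/\eta\ge 0$; since by ${\bf (H5)}$--${\bf (H6)}$ the map $p\mapsto H(p,\rho_2,\omega)$ is convex with minimum at $p=0$, one has $H^{+}(p_\eta,\rho_2,\omega)=H(p_\eta,\rho_2,\omega)$, so both branches of the relaxed sub-solution condition give $H(p_\eta,\rho_2,\omega)\le\lambda-\tilde\eps_0$, exactly the interior inequality. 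Second, if $y_\eta=\rho_2$ (super-solution tested there), then since $H\ge H^{+}$ either branch of the super-solution condition yields $H(p_\eta,\rho_2,\omega)\ge\lambda$. In every boundary scenario one therefore recovers the same pair of inequalities as in the interior, and the modulus-of-continuity argument with the gap $\tilde\eps_0$ concludes, with no modification of the penalization. So the architecture of your proof is sound, but the key boundary step must be replaced by this slope-sign/convexity observation; as submitted it would not go through.
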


\begin{proof}[Proof of Proposition \ref{preuve des bonnes pentes du correc approch sur un domaine tronque}.] We prove only the case $A>\mu^\star_R$, the other one being similar.
	Let $y_0\in [\frac 2 {\sqrt \e},+\infty)$, $r>0$ and $\tilde \eps_{0}>0$ small enough be such that $\overline{A}-2\tilde \eps_{0}>\mu_{R}^{\star}$. For $\oo \in \o_{0}^{\e_0}$, by Corollary \ref{convergence du correct approch sur une boule infinie}, there exists $\d_{0}$ such that for all $\d \le \d_{0}$, $$|\d v^{\d}(y,\oo)+\overline{A}|\le\tilde  \eps_{0}\mbox{ for all }y \in B_{r/\d}.$$
Since $v^{\d}(\cdot,\oo)$ is solution of \eqref{correcteur app} in $\R$, we then have 
$$   
 H(Dv^{\d},y,\oo)=-\d v^{\d}(y,\oo)\ge -\tilde \eps_{0}+\overline{A} \mbox{ in } B_{r/\d}.
 $$
 We set $v(y,\oo)=v^{\d}(y,\oo)-v^{\d}(y_{0},\oo)$, then by Proposition \ref{restriction des solutions} and using the fact that $H=H_{R}$ in $(\frac 2 {\sqrt \e},+\infty)$, we have that $v$ is solution of
 $$ \left\{
\begin{aligned}
&H_{R}(Dv,y,\oo)\ge\overline{A}-\tilde \eps_{0} & \mbox{ in }(y_{0},r/\d),\\
&H_{R}^+(Dv,r/\d,\oo)\ge \overline{A}-\tilde \eps_{0} \\
&v(y_{0})\ge0.
\end{aligned}\right.$$
 
 We now construct a strict super-solution of the same equation. 
 We set  $\mu=\overline{A}-2\tilde \eps_0>\mu_{R}^{\star}$ and $m(y,\oo)=m^R_\mu(y,0,\oo)-m^R_\mu(y_0,0,\oo)$, where $m^R_\mu$ is given in Proposition \ref{pro:3.1}. Then by Proposition \ref{restriction des solutions}, $m$ is solution of 

$$ \left\{
\begin{aligned}
&H_{R}(Dm,y,\oo)=\overline{A}-2\tilde \eps_{0} & \mbox{ in }(y_{0},r/\d),\\
&H_{R}^+(Dm,r/\d,\oo)= \overline{A}-2\tilde \eps_{0} \\
&m(y_0,\oo)=0.
\end{aligned}\right.$$
Then, the comparison principle, Proposition \ref{comparison.principale.tron.domaine}, yields that for all $y \in [y_{0},r/\d]$, 
$$ 
v(y,\oo) \ge m(y,\oo). 
$$ 
That is 
\begin{equation}\label{eq:15}
 v^{\d}(y,\oo)-v^{\d}(y_{0},\oo)\ge m^R_\mu(y,0,\oo)-m^R_\mu(y_0,0,\oo).
 \end{equation}
Using that, by Proposition \ref{pro:3.3} (recall that Lemma \ref{lem:3.5} gives that $\overline m_\mu^R(y)=\overline p^{R,+}_\mu.y$ for $y\ge 0$),
$$\sup_{B_{r/\d}}\frac 1\d\left | m^R_\mu(y,0,\oo)-\overline p^{R,+}_\mu\cdot y\right |=\sup_{B_{r}} \left | \frac {m^R_\mu\left(\frac z\d ,0,\oo\right)}\d-\overline p^{R,+}_\mu\cdot z\right |\to 0,
$$
we get that there exists a constant $C$ such that for all $\delta\le \delta_0$, we have
$$
\sup_{B_{r/\d}}\left | m^R_\mu(y,0,\oo)-\overline p^{R,+}_\mu\cdot y\right |\le C\d.
$$
Injecting this in \eqref{eq:15}, we get
$$
 v^{\d}(y,\oo)-v^{\d}(y_{0},\oo)\ge \overline p^{R,+}_\mu(y-y_0)-2 C_\d\quad \forall y,y_0\in \left(\frac{2}{\sqrt{\eps}},r/\d\right).
 $$

Finally if we choose $\gamma_{0}$, such that $\overline{p}_{\overline{A}}^{R,+}>\gamma_{0}\ge 0$ ($\mu_{R}^{\star}=\overline{H}_{R}(0)$), then for all $\gamma\le \gamma_{0}$, we have that 
$$  
 \overline{H}_{R}\left(\overline{p}_{\overline A}^{R,+}-\gamma\right)=\overline{H}_{R}^{+}\left(\overline{p}_{\overline A}^{R,+}-\gamma\right),
 $$ 
 and we can choose $\tilde \eps_{0}>0$ such that 
 $$ 
 \overline{p}_{\mu}^{R,+}=\overline{p}_{\overline A}^{R,+}-\gamma.  
 $$ 
This ends the proof. 
		
\end{proof}
		
\subsection{Proof of Theorem \ref{convergence de correcteur approche dans une boule et quil verifie les bonnes pentes}}
\begin{proof}[Proof of Theorem \ref{convergence de correcteur approche dans une boule et quil verifie les bonnes pentes}]
The existence of the approximated correctors is given in Proposition \ref{pro:existence-vd} while the convergence of $\d v^\d$ to $-\overline A$ in balls of radius $r/\d$ is given in Corollary \ref{convergence du correct approch sur une boule infinie}.  It just remains to show that $\overline v^\d$ converges to $W$ satisfying \eqref{eq:W}. 

First, we have that $\overline v^\d$ is Lipschitz continuous, uniformly in $\d$. Then, up to a subsequence, there exists $W$, with $W(0)=0$, such that
$$\overline v^\d\to W\quad\textrm{locally uniformly as }\d\to 0.$$
Like in \cite{Galise} and arguing as in the proof of convergence away from the junction point, we have that $W$ satisfies
\begin{align*}
&\overline H_R (W_y)=\overline A \textrm{ for }y>0\\
&\overline H_L (W_y)=\overline A \textrm{ for }y<0
\end{align*}
In the case $\overline A=\mu^\star_R$, this implies that 
$$\overline p_R^+\le W_y\le \hat p_R^+.$$
In the case $\overline A>\mu^\star_R$, we also get, from \eqref{eq:16}, that
$$W_y\ge  \overline p_R^+-\g$$
and so the equation satisfied by $W$ also implies that
$$\overline p_R^+\le W_y\le \hat p_R^+$$
and gives the particular form \eqref{eq:W} of $W$ for $y>0$.
Similarly, we can prove for $y<0$ that
$$ \hat p_L^-\le W_y\le\overline  p_L^-$$
which gives \eqref{eq:W} for $y<0$. This ends the proof.
\end{proof}

\section{Convergence result of the rescaled problem \eqref{P.en.epsilon}}\label{converg resultat}
We have defined the right and left deterministic hamiltonians and the effective flux limiter $\overline{A}$, so to complete the proof of Theorem \ref{Theoreme d homog du prob en eps}, we should prove the convergence result. This is the goal of this section.
Under Assumption ${\bf (H}$-${\bf \e)}$, we define $\o_0$ by 
$$\o_0=\bigcup_\e\o_0^\e.$$

We start by the definition of the relaxed half-limits, defined for all locally bounded family $\{u^{\eps}\}_{\eps}$, and for $\oo\in \o_{0}, \; x\in \R$, by
$$\left\{
\begin{aligned}
&\overline{u}(t,x,\oo)=\limsup\limits_{s\to t, y \to x,\eps \to 0}u^{\eps}(s,y,\oo),\\
&\underline{u}(t,x,\oo)=\liminf\limits_{s\to t, y \to x,\eps \to 0}u^{\eps}(s,y,\oo).
\end{aligned}
\right.  
 $$
To define these half-relaxed limits for the solution $u^\eps$ of \eqref{P.en.epsilon}, we have to get some bound.

\begin{lem}[Barriers]\label{barriere}Under Assumptions ${\bf (H)}$, and for all $\oo \in \o$, there exists a constant $C > 0$, independant of $\oo$, such that for all $\eps>0$, we have
$$|u^{\eps}(t,x,\oo)-u_{0}(x)|\le Ct\,\,\,\, \mbox{for all}\,\, (t,x) \in (0,T)\times\R.$$ 
\end{lem}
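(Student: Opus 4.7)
The plan is to construct explicit affine-in-time barriers of the form $w^{\pm}(t,x)=u_0(x)\pm Ct$ and apply a comparison principle for \eqref{P.en.epsilon}. Since $u_0$ is Lipschitz continuous, let $L$ denote its Lipschitz constant. By Assumption ${\bf (H2)}$, for every $|p|\le L$, every $y\in\R$ and every $\oo\in\o$ we have
$$|H(p,y,\oo)|\le \max(c_0,C_0)(L+\gamma)=:M,$$
a bound that is independent of $\oo$ and of $\eps$.

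First I would verify that $w^+(t,x)=u_0(x)+Mt$ is a viscosity supersolution of the equation in \eqref{P.en.epsilon}. If a smooth test function $\phi$ touches $w^+$ from below at $(t_0,x_0)$, then $(t,x)\mapsto \phi(t,x)-Mt$ touches $u_0$ from below at $(t_0,x_0)$, so $\phi_t(t_0,x_0)=M$ and $p:=\partial_x\phi(t_0,x_0)$ lies in the subdifferential of $u_0$ at $x_0$; in particular $|p|\le L$. Hence
$$\phi_t(t_0,x_0)+H\!\left(p,\tfrac{x_0}{\eps},\oo\right)\ge M-M=0.$$
The symmetric computation shows that $w^-(t,x)=u_0(x)-Mt$ is a viscosity subsolution. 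Both $w^{\pm}$ agree with $u_0$ at $t=0$, so they are candidates for comparison with $u^\eps$.

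Next I would invoke the standard comparison principle for Lipschitz, coercive Hamilton-Jacobi equations on $\R$ (valid under ${\bf (H1)}$--${\bf (H4)}$), applied separately to the pairs $(u^\eps,w^+)$ and $(w^-,u^\eps)$ on $(0,T)\times\R$. Because $u^\eps$, $w^+$, $w^-$ have at most linear growth in $x$ (all are Lipschitz in $x$ uniformly in $t$, up to the contribution $Mt$), the usual doubling-of-variables argument with a penalization $\e|x|^2$ or $\e\sqrt{1+|x|^2}$ applies, and yields
$$w^-(t,x)\le u^\eps(t,x,\oo)\le w^+(t,x)\quad\text{for all }(t,x)\in(0,T)\times\R.$$
Taking $C=M$ gives the announced estimate $|u^\eps(t,x,\oo)-u_0(x)|\le Ct$, with $C$ depending only on $L$, $c_0$, $C_0$, $\gamma$ and in particular independent of $\oo$ and of $\eps$.

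The only delicate point is the comparison step, because the domain is unbounded and $u_0$ need not be bounded; however $u^\eps-w^+$ and $w^--u^\eps$ are bounded above on $(0,T)\times\R$ up to sub-linear error thanks to the uniform Lipschitz bound in $x$ inherited from $u_0$ (which itself follows from the Perron/bounds propagation for \eqref{P.en.epsilon}), so the usual unbounded-domain comparison argument applies without trouble.
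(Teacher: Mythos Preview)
Your argument is correct and follows exactly the same route as the paper: define $C$ as a uniform bound on $|H(p,y,\oo)|$ over $|p|\le \mathrm{Lip}(u_0)$ (which is finite and $\oo$-independent by {\bf (H2)}), check that $u_0(x)\pm Ct$ are super- and sub-solutions of \eqref{P.en.epsilon}, and conclude by comparison. The paper's proof is just a terser version of yours, without the explicit viscosity verification or the discussion of the unbounded-domain comparison.
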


\begin{proof} Let $C_{0}$ be the Lipschitz constant of $u_{0}$. We set $C=\sup\limits_{\substack{y \in \R ,\\|p|\le C_{0}}}|H(p,y,\oo)|$. $C$ is well defined and independant of $\oo$ by hypothesis ${\bf (H2)}$. Then $ u_{0}(x)\pm Ct  $ are super- and sub-solutions of \eqref{P.en.epsilon}. So the result follows by the comparison principle. This ends the proof of the lemma.
		
 \end{proof}

We are now able to give the proof of Theorem \ref{Theoreme d homog du prob en eps}

\begin{proof}[Proof of Theorem \ref{Theoreme d homog du prob en eps}.]  Let $\oo \in\o_{0}$. We give the proof in the case of assumption ${\bf (H}$-${\bf \eps)}$. There exists $\e_0>0$ such that $\oo\in \o_0^{\e_0}$. In the sequel, we work with $\e\le \e_0$ (so that $\tilde A(\oo)=\overline A$ for all $\e$). To prove that $u^{\eps}(\cdot,\cdot,\oo)$ converges locally uniformly to $u(\cdot,\cdot)$ on $(0,T)\times \R$, it's enough to show that $\overline u$ and $\underline u$ are respectively sub and super-solution of \eqref{P.jonction.limite}. Then the comparison principle yields that, $\overline{u} \le \underline{u} $ and since, by definition, we have $\underline{u} \le \overline{u} $, we will get 
$$\lim\limits_{\eps \rightarrow 0}u_{\eps}(t,x,\oo)=u(t,x).
$$\medskip

We show that $\overline{u}$ is a sub-solution of \eqref{P.jonction.limite}, the proof for the super-solution being similar. First, note that the initial condition is satisfied by Lemma \ref{barriere}. 
We argue by contradiction by assuming that there exists a test function $\phi \in C^{1}((0,T) \times \R)$ with $\overline{u}(\overline{t},\overline{x})=\phi(\overline{t},\overline{x})$ and
\begin{equation} \label{ineg}
(\overline{u}-\phi)(t,x)<(\overline{u}-\phi)(\overline{t},\overline{x})=0 \,\,\, \mbox{ for all} \,\,(t,x) \in B_{\overline{r}}(\overline{t},\overline{x})\setminus\{(\overline{t},\overline{x})\},
\end{equation}	
such that
$$\left\{\begin{aligned}
&\phi_{t}(\overline{t},\overline{x})+\overline{H}_{L}(D\phi(\overline{t},\overline{x})) = \theta >0, ~&\mbox{if}~\overline{x} &\in ]-\infty,0[,\\
&\phi_{t}(\overline{t},\overline{x})+\overline{H}_{R}(D\phi(\overline{t},\overline{x})) = \theta >0, ~&\mbox{if}~\overline{x} &\in ]0,+\infty[,\\
&\phi_{t}(\overline{t},0)+\max\left\{\overline{A}, \overline{H}_{L}^{+}(D\phi(\overline{t},0^{-}),\overline{H}_{R}^{-}(D\phi(\overline{t},0^{+})\right\} =\theta > 0~&\mbox{if} ~\overline{x}&=0.
\end{aligned}\right.
$$
We distinguish two cases: $\overline{x}=0$ or $\overline{x}\neq 0$.
\medskip

\noindent{\bf Case 1:} $\overline{x}=0$. According to \cite{Imbert}, the function $ \phi $ should  has the following form
\begin{equation}\label{eq:20}
\phi(t,x)=\psi(t)+\overline{p}_{L}^- x \mathbf{1} _{\{x<0\}}+\overline{p}_{R}^+ x \mathbf{1} _{\{x>0\}} ,
\end{equation}
 with $\psi \in C^{1}(0,+\infty)$. 
\begin{rem}
For the super-solution, the test function has the following form:
$$\phi(t,x)=\psi(t)+\hat{p}_{L}^- x \mathbf{1} _{\{x<0\}}+\hat{p}_{R}^+ x \mathbf{1} _{\{x>0\}} .$$ 
\end{rem}
We then have
$$
\psi^{'}(\overline{t})  +\max\left\{\overline{A}, \overline{H}_{L}^{+}(D\phi(\overline{t},0^{-}),\overline{H}_{R}^{-}(D\phi(\overline{t},0^{+})\right\} =\psi^{'}(\overline{t})+\overline{A}=\theta > 0.
$$ 
We define the perturbed test function (see \cite{Evans}) by
$$\phi^{\eps}(t,x)=\psi(t)+\eps v^{\eps}\left(\frac{x}{\eps},\oo\right).
$$  
We claim that $\phi^{\eps}$ is a super-solution of
\begin{equation}\label{pert}
\phi^{\eps}_{t}+H\left(D\phi^{\eps},\frac{x}{\eps},\oo\right)=\frac{\theta}{2}\,\,\,\, \mbox{in} \,\,B_{r}(\overline{t},0),
\end{equation} 
for $r$ small enough. To prove this, let $\eta$ be a test function, such that $(t_{1},x_{1})$ is a minimum point of $\phi^{\eps}-\eta$ with $\phi^{\eps}(t_{1},x_{1})=\eta(t_{1},x_{1})$. 
We have
$$\phi^{\eps}(t,x) \ge \eta(t,x),
$$
i.e
$$\psi(t)+\eps v^{\eps}\left(\frac{x}{\eps},\oo\right) \ge \eta(t,x)
$$
which implies
$$ 
v^{\eps}\left(\frac{x}{\eps},\oo\right) \ge \frac{1}{\eps}\left(\eta(t,x)-\psi(t)\right).
$$
Using that 
$$
v^{\eps}(y_{1},\oo)= \frac{1}{\eps} \left(\eta( t_{1},\e y_{1})-\psi( t_{1})\right), 
$$ 
with $ y_{1}=\frac{x_{1}}{\eps}$, we deduce that 
$$
(t,y) \to v^{\eps}(y,\oo)-\left(\frac{1}{\eps} \left(\eta(t,\eps y)-\psi(t)\right)\right) 
$$ 
reaches a minimum at $(t_{1},y_{1})$. We then have $\eta_{t}(t_{1},x_{1})=\psi'(t_{1})$. Moreover, since  $v^{\e} $ is as super-solution of \eqref{correcteur app}, we have, for $\eps$ small enough,
$$
H\left(D\eta(t_1,y_1),\frac {x_1}\eps,\oo\right)\ge - \eps v^\eps\left(\frac {x_1}\eps,\oo\right)\ge \overline A -\frac \theta 4,
$$
where we have used that
$$ \eps v^{\eps}(x,\oo)\to -\overline{A}$$ 
in balls of radius $\frac{r}{\eps}$.
We then get
$$\eta_{t}(t_{1},x_{1})+H\left(D\eta(t_{1},x_{1}),\frac{x_{1}}{\eps},\oo\right) \ge \psi'(t_{1})+\overline{A}-\frac{\theta}{4}\ge\frac{\theta}{2}$$
for $r$ small enough.
Combining \eqref{eq:W} with \eqref{ineg} and \eqref{eq:20}, we can fix $\kappa_r>0$ and $\eps$ small enough such that
$$ 
u^{\eps}+\kappa_{r} \le \phi^{\eps}\,\,\,\,\mbox{on}\,\, \partial B_{r}(\overline{t},0).
$$
By the comparison principle, the previous inequality holds in  $B_{r}(\overline{t},0)$. Passing to the limit $\eps\to 0$ and $(t,x)\to (\overline t, \overline x)$, we get the following contradiction
$$ 
\overline{u}(\overline{t},0)+\kappa_{r} \le \phi(\overline{t},0)=\overline{u}(\overline{t},0).
$$

\medskip

\noindent{\bf Case 2 :}  $\overline{x} \ne 0$. We use the approximate corrector problem far from the junction i.e given by the right and left hamiltonian. The proof is classical and we skip it.
			
This ends the proof of the theorem.

\end{proof}

\vspace{10mm}
\noindent{\bf ACKNOWLEDGMENTS}

 The authors would like to thanks P. Cardaliaguet and P. Calka for fruitful discussions in the preparation of this work.
This project was co-financed by the European Union with the European regional development fund (ERDF,18P03390/18E01750/18P02733) and by the Normandie Regional Council via the M2SiNUM project and by ANR MFG (ANR-16-CE40-0015-01).

	\end{document}